\newcommand{\etalchar}[1]{$^{#1}$}
\providecommand{\MR}{\relax\ifhmode\unskip\space\fi MR }
\newtheorem{theorem}{Theorem}[section]
\newtheorem{lemma}[theorem]{Lemma}
\newtheorem{corollary}[theorem]{Corollary}
\newtheorem{proposition}[theorem]{Proposition}
\theoremstyle{definition}
\newtheorem{definition}[theorem]{Definition}
\newtheorem{remark}[theorem]{Remark} 
\numberwithin{equation}{section}
\newcommand\C{\mathbf{C}} 
\newcommand\Q{\mathbf{Q}}
\newcommand\Z{\mathbf{Z}}
\newcommand\F{\mathbf{F}}
\newcommand\twomatr[4]{\begin{pmatrix}#1&#2\\ #3&#4 \end{pmatrix}}
\newcommand\tensor{\otimes}
\newcommand\isomorphic{\cong}
\DeclareMathOperator{\Hom}{Hom}
\newcommand\directsum{\oplus}
\newcommand\union{\cup}
\newcommand\intersect{\cap}
\DeclareMathOperator{\image}{image}
\newcommand\compose{\circ}
\newcommand\Projective{{\bf P}} 
\newcommand\Half{\mathcal{H}} 
\newcommand\LL{\mathcal{L}} 
\newcommand\MM{\mathcal{M}} 
\newcommand\OO{\mathcal{O}} 
\newcommand\AV{\J}           
\newcommand\Rbar{\overline{R}} 
\newcommand\RR{\mathfrak{R}} 
\newcommand\Kbar{\overline{K}} 
\newcommand\Kbarstar{{\Kbar\,^*}} 
\newcommand\calG{\mathcal{G}} 
\newcommand\boxtensor{\boxtimes} 
\newcommand\LLhat{\widehat{\LL}} 
\newcommand\ua{\underline a}
\newcommand\uc{\underline c}
\newcommand{\calC}{\mathcal{C}}
\newcommand{\J}{\mathfrak{J}}
\newcommand{\fa}{\omega}
\newcommand{\kk}{\varkappa}
\begin{document}

\title{An analog of the Edwards model for Jacobians of genus 2 curves}

\author{E. V. Flynn}
\address{Mathematical Institute, University of Oxford, Andrew Wiles
Building, Radcliffe Observatory Quarter, Woodstock Road, Oxford
OX2 6GG, United Kingdom}
\email{flynn@maths.ox.ac.uk}

\author{Kamal Khuri-Makdisi}
\address{Mathematics Department,
American University of Beirut, Bliss Street, Beirut, Lebanon}
\email{kmakdisi@aub.edu.lb}

\keywords{Jacobian, Abelian Variety}
\subjclass{11G30, 11G10, 14H40}
\thanks{October 24, 2023}
\thanks{Both authors thank Michael Stoll for 
organizing the Rational Points 2019 workshop in Schney, where they
began working together  on this problem.  The second named author (KKM)
gratefully acknowledges generous funding and a supportive research
environment during long scientific visits at both the Max Planck
Institute for Mathematics in Bonn (2021) and the Institute for
Advanced Study in Princeton (2022, with funding from the Charles
Simonyi Endowment).}

\begin{abstract}
We give the explicit equations for a $\Projective^3 \times \Projective^3$
embedding of the Jacobian of a curve of genus 2, which gives a
natural analog for abelian surfaces of the Edwards curve model
of elliptic curves. This gives a much more succinct description
of the Jacobian variety than the standard version in~$\Projective^{15}$.
We also give a condition under which, as for the Edwards curve,
the abelian surfaces have a universal group law.
\end{abstract}

\maketitle

\section{Introduction}
\label{section1}

In~\cite{BernsteinLangeEC} (generalising the form given 
in~\cite{EdwardsNormalForm}) a version of the model of an elliptic
curve and its group law are given, which have a particularly
elegant explicit description, subject to the existence of~$D_1$, a
point of order~$4$, defined over the ground field.
An interpretation of this embedding is to say that, if~$D$
is any point on the standard Weierstrass model of an elliptic curve, 
then we
map $D$
into $\Projective^1 \times \Projective^1$, via
the projective $x$-coordinate of~$D$, together with the
projective $x$-coordinate of~$D + D_1$. This model becomes
more succinct if we diagonalise
the coordinates on $\Projective^1$
with respect to addition
by~$E_1$, where $E_1 = 2 D_1$ is of order~$2$.

In this article, we
give an analog for Jacobians of
curves of genus~$2$, which have~$D_1$, a point of order~$4$ defined
over the ground field, as well as~$E_2$, a point of order~$2$
independent from~$E_1 = 2 D_1$. We make use of the
embedding of the Kummer surface
in $\Projective^3$,
given on p.18 of~\cite{CasselsFlynnBook}. Our embedding of
a point $D$ on
the
Jacobian variety into $\Projective^3 \times \Projective^3$ will be
via the image
of $D$, together with the image of $D + D_1$
on the Kummer surface.
This model becomes
more succinct if we diagonalise
the coordinates on $\Projective^3$
with respect to addition
by both~$E_1$ and~$E_2$.

In Section~\ref{section2}, we develop these ideas geometrically;
the main results will be Theorem~\ref{generatorsInEachBidegree},
which describes how many independent defining equations there 
are of each bidegree,
and Theorem~\ref{PQAddition} which describes the degree of
the equations in the matrices which give the group law.
In Section~\ref{section3}, we give a brief derivation of the Edwards curve,
in the above style, explaining how the group law is universal
when a specified point is not defined over the ground field;
when we say that the group law is universal, we are referring to
its application to rational points over the base field.
In Section~\ref{section4}, we derive our $\Projective^3 \times \Projective^3$
embedding of the Jacobian variety of our genus~$2$ curve,
giving explicitly a set of defining equations for the variety
in Theorem~\ref{theoremdefeqns} (using Theorem~\ref{generatorsInEachBidegree}
to know that we have a complete set of defining equations),
and its group law in Theorem~\ref{theoremGroupLawGenus2}. 
These are considerably
more succinct than the standard versions in~$\Projective^{15}$,
such as those described in~\cite{CasselsFlynnBook}.
We also give a twisted version of the abelian surface,
analogous to the twist performed on Edwards curves.
In Section~\ref{section5}, we also give in Corollary~\ref{genus2universal} 
(a consequence of Theorem~\ref{alwaysnonzero})
a condition on the parameters under which, as for the Edwards curve, 
the abelian surfaces have a universal group law. The situation here
is more subtle since the degenerate locus is geometrically a
possibly reducible curve (rather than a pair of points,
as in the elliptic curve case), and so we need to 
construct a condition on the parameters that prevents this curve from 
having any points over the ground field.

\section{Generators of the ideal of relations}
\label{section2}

Our intention in this section is to describe a
$\Projective^3 \times \Projective^3$ embedding of the
Jacobian of a genus two curve;
the main results will be Theorem~\ref{generatorsInEachBidegree},
which describes how many independent defining equations there 
are of each bidegree, and Theorem~\ref{PQAddition} which describes the degree of
the equations in the matrices which give the group law. 
Note that the statements of
Theorem~\ref{generatorsInEachBidegree} 
and Theorem~\ref{PQAddition} are what will be
used later in Section~4; otherwise, the notation and objects
in Section~2 will not be required later. Any reader who
is primarily interested in the results of Sections~4 and~5
(and not interested in the justification of the theorems
in this section) is welcome just
to read the statements of Theorem~\ref{generatorsInEachBidegree}
and Theorem~\ref{PQAddition},
and otherwise proceed to Section~\ref{section3}.

We work with the Jacobian $\AV$ of a genus two curve $\calC$
throughout, viewing $\AV$ as $\text{Pic}^0 \calC$.  Then $\AV$ is a
principally polarized abelian surface. 
The article~\cite{LubiczRobert}, which does not limit itself to
Jacobians (or for that matter to dimension~$2$), already introduces the
idea of using the Kummer coordinates of a point $p \in \AV$, along
with the Kummer coordinates of $p+p_0$ for a fixed $p_0$ that is not a
2-torsion point.  They consider a general $p_0$, and observe that the
resulting map $\AV \to \Projective^3 \times \Projective^3$ given by
$p \mapsto (\kappa(p),\kappa(p+p_0))$ for the Kummer map $\kappa$
gives an embedding of $\AV$ into $\Projective^3 \times \Projective^3$,
provided $p_0$  
is not a point of order~$2$.  They also prove a number of results and
give effective methods to work with $\AV$, using
differential additions and other constructions that involve viewing
the fibre of $\kappa$ (or its translate) over a rational point of
$\AV$ as the spectrum of a two-dimensional algebra over the ground
field, so as to capture the two preimages in $\AV$ even if they are
not individually rational.

In this article, we restrict to $p_0 = D_1$, a
point of exact order~$4$, and will impose in Section~\ref{section4}
certain rationality conditions on 
the subgroup $H \subset \AV$ generated by $D_1$ and a second point
$D_2$ of exact order~$4$.  We alert the reader that $H$ is not
isotropic under the Weil pairing on $\AV[4]$: in fact, $e_4(D_1,D_2) =
-1$.  On the other hand, the subgroup $2H \isomorphic \Z/2\Z \times
\Z/2\Z$ is isotropic in $\AV[2]$, and corresponds to the kernel of
a Richelot isogeny.

We first describe the Edwards-like construction over an algebraically
closed field $\Kbar$, without worrying about rationality over the
ground field $K$.  In this section, we routinely identify $\calC$ and
$\AV$ with 
their set of $\Kbar$-valued points.  We require $\Kbar$ not to be of
characteristic~$2$, so as to invoke the basic addition formula of
Mumford (p.~324 in Section~3 of~\cite{MumfordEqnsI}; see also
Theorem~8 of~\cite{KempfLinearSystems}).  This addition formula, which
follows from the isogeny theorem for algebraic theta functions, is an
algebraic generalization of the analytic addition
formula~\eqref{eqnThetaAddition} for complex theta series.
The algebraic formulas we write down,
Theorem~\ref{thmAlgebraicAdditionFormula} and Corollary~\ref{corFdalpha},
are direct consequences of the basic addition formula, but it is
helpful for us to write them down explicitly and concretely, with a
careful note of isomorphisms such as $h = h_{p,q}$
in~\eqref{eqAlgebraicThetaAddition} and $\phi \compose j$
in~\eqref{eqFdalphaFormula}. 

If the reader is willing to accept that the structure of our results
for general~$\Kbar$ is completely mirrored by the structure
when $\Kbar = \C$, then it may be helpful to consult an earlier
version of this article, available at
\begin{center}
  \texttt{https://arxiv.org/abs/2211.01450v2}
\end{center}
where a shorter analytic version of the argument in this section is given,
using explicit computations with complex theta functions.


\subsection*{Preliminaries}

For a (geometric) point $x \in \AV$, we denote by $T_x: \AV \to \AV$ the
translation map: thus $T_x(p) = p+x$.
For $k \in \Z$, we denote by $[k]: \AV \to \AV$ the multiplication by
$k$.  For $k \geq 2$, with $k$ invertible in $\Kbar$ (mostly $k = 2$
or $k = 4$), we write $\AV[k]$ for the kernel of $[k]$.  Then $\AV[k]$
is isomorphic to $(\Z/k\Z)^4$.

The abelian surface~$\AV$ is principally polarized; let $\LL$ be a
symmetric line bundle on~$\AV$ (meaning $[-1]^* \LL \isomorphic \LL$)
giving rise to the principal polarization.  Then the isomorphism class
of $\LL$ is unique up to replacing $\LL$ by $T_x^*\LL$ for some
$x \in \AV[2]$.  Nonetheless, the isomorphism class of $\LL^2$ (hence
also of any even power of $\LL$) is unique; in the terminology
of~\cite{MumfordEqnsI}, the bundle $\LL^2$ is totally symmetric.

When $\Kbar = \C$, we can view $\AV$ analytically as the 
complex torus $\C^2/(\Z^2 + \Omega \Z^2)$, where $\Z^2$ and $\C^2$ are
spaces of column vectors, and $\Omega \in \Half_2$ is a point in the
Siegel upper half-space; that is, $\Omega$ is a symmetric matrix in
$M_2(\C)$ whose imaginary part is positive definite.  In that setting,
one can use complex-analytic 
theta functions to describe global sections of the powers $\LL^k$ of
(one choice of) the line bundle $\LL$.
We normalize our complex theta functions as follows.

\begin{definition}
\label{defTheta}
Let $k \geq 1$, and let $\beta \in \Z^2$.
For $z \in \C^2$ we define  
\begin{equation}
\label{eqDefTheta}
\theta_{k,\beta}(z)
= \sum_{n \in \beta + k\Z^2}
       e(\frac{n\cdot\Omega n}{2k} + n \cdot z),
\end{equation}
where $e(x) = \exp(2\pi i x)$ for $x \in \C$, and $\cdot$ is the
standard bilinear product on $\C^2$, so
$n\cdot \Omega n = {}^t n \Omega n$ and $n \cdot z = {}^t n z$.
Note that $\theta_{k,\beta}$ depends only on the image of $\beta$ in
$(\Z/k\Z)^2$.
In terms of theta functions with
characteristics (see~\cite{MumfordTataI}, page~123), we have
$\theta_{k,\beta}(z)
= \theta{\genfrac{[}{]}{0pt}{}{\beta/k}{0}}(k\Omega, kz)$.
\end{definition}

In the above analytic setting, it is standard that 
the functions $\{\theta_{k,\beta}\}_{\beta \in (\Z/k\Z)^2}$ are a
basis for $H^0(\AV,\LL^k)$.  Here the space $H^0(\AV,\LL^k)$ can be
identified with the space of holomorphic functions $f: \C^2 \to \C$
which transform according to 
\begin{equation}
\label{eqAnalyticLLk}
\begin{split}
f(z + \ell)
   &= f(z), \qquad \ell \in \Z^2,\\
(f|_k m)(z) : = e(km \cdot\Omega m/2 + km \cdot z) f(z + \Omega m)
   &= f(z), \qquad m \in \Z^2.\\
\end{split}
\end{equation}
We also have the standard addition formula, for $\beta, \beta' \in
\Z^2$ and $z,w \in \C^2$:
\begin{equation}
\label{eqnThetaAddition}
\theta_{k,\beta}(z+w) \theta_{k,\beta'}(z-w)
= \sum_{c \in (\Z/2\Z)^2}
      \theta_{2k,\beta+\beta'+kc}(z)
      \theta_{2k,\beta-\beta'+kc}(w).
\end{equation}
In the above,
the sum is over representatives
$c \in \{ {}^t(0,0), {}^t(1,0), {}^t(0,1), {}^t(1,1) \}$,
so $kc$ makes sense as a $2$-torsion element in $(\Z/2k\Z)^2$. 

For general $\Kbar$, we will use an algebraic version of the above
formula, primarily
in the case where $k = 2$ and $2k = 4$.
We will also need the
analog of another formula that relates specific linear combinations of
the $\{\theta_{4,\gamma}(z)\}_\gamma$ to the shifted theta functions
$\theta_{1,0}(2z + \alpha/2 + \Omega\beta/2)$, evaluated at $2z$.
These shifted theta functions are essentially sections of $T_x^*\LL$,
where $x \in \AV[2]$ corresponds to $\alpha/2 + \Omega\beta/2 \in \C^2$.

We first pin down canonical bases of $H^0(\AV,\LL^k)$
in terms of algebraic theta functions, summarizing
what we need from Sections 1--3 of~\cite{MumfordEqnsI}.
We follow Mumford's normalization, with some notation
from~\cite{KempfLinearSystems}.
For any $k \geq 1$ that is invertible in $\Kbar$
(mainly $k \in \{2,4\}$), it is standard that the  
translates of $\LL^k$ that are isomorphic to $\LL^k$ are exactly the
translates by $k$-torsion points:
\begin{equation}
  \label{eqMumfordsKgroup}
  \{ x \in \AV \mid T_x^*\LL^k \isomorphic \LL^k \} = \AV[k].
\end{equation}
Mumford constructs the theta group $\calG_k$ of $\LL^k$ (more
generally, he constructs the theta group of any ample line bundle of
separable type) as a central extension
\begin{equation}
  \label{eqThetaGroupExactSequence}
  0 \to \Kbarstar \to \calG_k \to \AV[k] \to 0.
\end{equation}
Here an element $\tilde{x} \in \calG_k$ with image $x \in \AV[k]$ is
actually a pair $\tilde{x} = (x,\phi)$ with a choice of isomorphism
$\phi: \LL^k \to T_x^* \LL^k$.  We can view $\phi$ concretely as an
algebraic family of isomorphisms between the fibers
$\LL^k_p$ and $(T_x^* \LL^k)_p = \LL^k_{p+x}$, where $\LL^k_p$ and
$\LL^k_{p+x}$ are one-dimensional vector spaces over $\Kbar$:
\begin{equation}
  \label{eqThetaGroupElt}
  \phi_p:  \LL^k_p \to \LL^k_{p+x}, \qquad p \in \AV.
\end{equation}
We also have an important action of $\calG_k$ on the space
$H^0(\AV,\LL^k)$.  Working through the definition of $U_z(s)$ on
p.~295 of~\cite{MumfordEqnsI}, we can describe the action concretely
as follows.  View any section $s \in H^0(\AV, \LL^k)$ as an algebraic
family of ``values'' $s(p) \in \LL^k_p$ at each varying fiber.  Then the
action of $\tilde{x} = (x,\phi)$ produces a section
$\tilde{x}*s \in H^0(\AV, \LL^k)$ with values
\begin{equation}
  \label{eqThetaGroupAction}
  (\tilde{x}*s)(p) = \phi_{p-x}(s(p-x)), \qquad p \in \AV.
\end{equation}
The Weil pairing associated to $\LL^k$ is an alternating
map that takes two elements $x,y \in \AV[k]$ to a $k$th root of unity
$e_k(x,y) \in \Kbarstar$ given by the commutator
\begin{equation}
  \label{eqWeilPairing}
  e_k(x,y) = \tilde{x} \tilde{y} \tilde{x}^{-1} \tilde{y}^{-1}.
\end{equation}
Here $\tilde{x}, \tilde{y} \in \calG_k$ are any lifts of the
elements $x,y \in \AV[k]$.

Mumford shows that the group $\calG_k$ has the structure of a
Heisenberg group, and that its 
action on $H^0(\AV,\LL^k)$ has a canonical structure up to
isomorphism.  Take a symplectic decomposition
\begin{equation}
  \label{eqTorsionDecomposition}
\AV[k] = A_k \directsum B_k,
\end{equation}
where $A_k, B_k \subset \AV[k]$ are subgroups with
$A_k \isomorphic B_k \isomorphic (\Z/k\Z)^2$, and such that each of
$A_k$ and $B_k$ is 
isotropic for the Weil pairing; then $e_k$ sets up a perfect pairing
between $A_k$ and $B_k$.  One can split the extension $\calG_k$
over each of $A_k$ and $B_k$, leading to two different injective group
homomorphisms (written with the same tilde notation)
\begin{equation}
  \label{eqLagrangeLifts}
  A_k \to \calG_k, \qquad a \mapsto \tilde{a};
  \qquad\qquad\qquad
  B_k \to \calG_k, \qquad b \mapsto \tilde{b}.
\end{equation}
We denote by $\tilde{A}_k$ and $\tilde{B}_k$ the images of these two
homomorphisms.

In Mumford's treatment, one takes a theta structure on
$\calG_k$. This includes a specific choice of isomorphism between
$B_k$ and $(\Z/k\Z)^2$; then $A_k$ is isomorphic to
$\Hom(B_k,\Kbarstar)$, by identifying $a \in A_k$ with
$l_a: B_k \to \Kbarstar$ given by $l_a(b) = e_k(b,a)$.  The theta
structure also includes appropriate choices of lifts of these
identifications to $\tilde{A}_k$ and $\tilde{B}_k$.  Then Mumford
shows that the action of $\calG_k$ on $H^0(\AV,\LL^k)$ is isomorphic
to a Schr\"odinger representation on the space he calls $V(\delta)$,
which in our situation consists of $\Kbar$-valued functions on 
$(\Z/k\Z)^2$ (see p.~297 of~\cite{MumfordEqnsI} for the precise action
of a general $\calG(\delta)$ on $V(\delta)$).

For this article, it will be easier to reword this 
action (while still keeping Mumford's normalization) in the language
used on pp.~68--69 of~\cite{KempfLinearSystems}.  So we will in fact
replace Mumford's space $V(\delta)$ with the space $W_k$ of
$\Kbar$-valued functions on $B_k$.  With this modification,
the actions of elements of $\tilde{A}_k$ and $\tilde{B}_k$ on a
function $(f: B_k \to \Kbar) \in W_k$ are
given by:
\begin{equation}
  \label{eqThetaGroupActionSchrodinger}
  (\tilde{a}*f)(b') = l_a(b') f(b') = e_k(b',a) f(b'),
  \qquad\qquad
  (\tilde{b}*f)(b') = f(b+b').
\end{equation}
These correspond to the operators $U_{(1,0,l_a)}$ and $U_{(1,b,1)}$ of
Mumford.  (The operator $U_{(\lambda,b,l_a)}$ corresponds to the
product $\lambda \tilde{a} \tilde{b} \in \calG_k$.)

For our application, it is easier to write everything in
terms of the action of the lifts $\tilde{a}$ and $\tilde{b}$ on the basis
$\{\delta_c\}_{c \in B_k}$ for the space $W_k$.  Here, as usual,
$\delta_c(b') = 1$ precisely when $b' = c$, and $\delta_c(b') = 0$
otherwise.  We then have
\begin{equation}
  \label{eqThetaGroupActionOnDeltaFunctions}
  \tilde{a}*\delta_c = e_k(c,a) \delta_c,
  \qquad\qquad
  \tilde{b}*\delta_c = \delta_{c-b},
  \qquad\qquad
  a \in A_k,
  \qquad
  b, c \in B_k.
\end{equation}
We can now define the algebraic theta functions.  The key idea here is
that the representation of $\calG_k$ on $H^0(\AV,\LL^k)$ is isomorphic
to the above representation on $W_k$.  Moreover, this representation
is irreducible.  Hence there exists an isomorphism, which by
Schur's Lemma is unique up to a scalar in $\Kbarstar$, between $W_k$ and
$H^0(\AV, \LL^k)$.  With respect to this isomorphism, each basis
element $\delta_c \in W_k$ corresponds to a section $\theta_{[k],c} \in
H^0(\AV,\LL^k)$, with the same transformation properties as
in~\eqref{eqThetaGroupActionOnDeltaFunctions}.  This determines the
$\{\theta_{[k],c}\}_{c \in B_k}$ up to a common constant factor.  Rewriting
$b'$ instead of $c \in B_k$, we therefore obtain the following action
on the various $\theta_{[k],b'} \in H^0(\AV,\LL^k)$:
\begin{equation}
  \label{eqThetaGroupActionOnAlgebraicThetas}
  \tilde{a}*\theta_{[k],b'} = e_k({b'},a) \theta_{[k],b'},
  \qquad\qquad
  \tilde{b}*\theta_{[k],b'} = \theta_{[k],b'-b}.
\end{equation}

In the complex analytic setting for all the above, where
$\AV = \C^2/\Lambda$ with the lattice $\Lambda = \Z^2 + \Omega \Z^2$,
the analogs of the above constructions are
$J_k = \frac{1}{k}\Lambda/\Lambda = A_k \directsum B_k$, with $A_k$ being
(the image in $\AV$ of) $\frac{1}{k}\Z^2$, and $B_k$ being (the image of)
$\frac{1}{k}\Omega\Z^2$.  Viewing global sections of $\LL^k$ as
functions satisfying~\eqref{eqAnalyticLLk}, we have the following
action of our lifts $\tilde{a}, \tilde{b} \in \calG_k$.  Note the
minus signs, similarly to~\eqref{eqThetaGroupAction}, as well as the
notation $f|_k m$ from~\eqref{eqAnalyticLLk}, where we now allow
$m = -\beta/k \in \Q^2$.
\begin{equation}
  \label{eqThetaGroupActionAnalytic}
  (\widetilde{\Bigl[\frac{\alpha}{k}\Bigr]}*f)(z)
      = f(z-\frac{\alpha}{k}),
  \qquad\qquad
  \widetilde{\Bigl[\Omega\frac{\beta}{k}\Bigr]}*f
      = f|_k (\frac{-\beta}{k}),
  \qquad
  \alpha,\beta \in \Z^2.
\end{equation}
The algebraic theta function $\theta_{[k],\Omega\beta'/k}$ then
corresponds to our analytic theta function $\theta_{k,\beta'}$.  The
Weil pairing over~$\C$ is characterized by
$e_k(\Omega\beta/k,\alpha/k) = e(-\beta \cdot \alpha/k)$.

We now return to general~$\Kbar$, and present  Mumford's addition
formula for algebraic theta functions in terms of the concrete
formalism above.  The product abelian variety $\AV \times \AV$ is
equipped with two projection maps 
$\pi_1, \pi_2: \AV \times \AV \to \AV$
to the first and second factors.  We consider on $\AV \times \AV$ the
line bundle $\LL^k \boxtensor \LL^k = \pi_1^* \LL^k \tensor \pi_2^*
\LL^k$.  Its fiber at the point $(p,q) \in \AV \times \AV$ is
the one-dimensional $\Kbar$-vector space $\LL_p^k \tensor \LL_q^k$.
A theta structure on $\calG_k$ for $\LL^k$ gives rise to a closely
related theta structure for $\LL^k \boxtensor \LL^k$.  We note in
particular that $H^0(\AV \times \AV, \LL^k \boxtensor \LL^k)$ has
a canonical basis
$\{ \theta_{[k],b} \boxtensor \theta_{[k],b'} \}_{b,b' \in B_k}$.
Here the notation $s \boxtensor t$, for $s,t \in H^0(\AV, \LL^k)$, is
defined by
\begin{equation}
  \label{eqBoxTensorOfSections}
  s \boxtensor t = (\pi_1^* s) \tensor (\pi_2^* t).
\end{equation}
The value of the section $s \boxtensor t$ at the point $(p,q)$ is of course
\begin{equation}
  \label{eqBoxTensorValues}
  (s \boxtensor t)(p,q) = s(p) \tensor t(q)
    \in \LL_p^k \tensor \LL_q^k
    = (\LL^k \boxtensor \LL^k)_{(p,q)}.
\end{equation}
As Mumford explains in Section~3 of~\cite{MumfordEqnsI},
the algebraic addition formula comes out of the isogeny
$\xi: \AV \times \AV \to \AV \times \AV$ given by
$\xi(p,q) = (p+q, p-q)$.  Mumford shows algebraically that we
have an isomorphism
$h: \xi^*(\LL^k \boxtensor \LL^k)
\isomorphic \LL^{2k} \boxtensor \LL^{2k}$.
(When $\Kbar = \C$, this isomorphism is easy to see analytically.)
We can view $h$ as an algebraic family of isomorphisms between the
fibers of the two line bundles, similarly to
the situation in~\eqref{eqThetaGroupElt}.
We thus obtain a family of isomorphisms $\{h_{p,q}\}_{(p,q) \in \AV
  \times \AV}$
between the one-dimensional fibers
$(\xi^*(\LL^k \boxtensor \LL^k))_{(p,q)}
                 = \LL^k_{p+q} \tensor \LL^k_{p-q}$,
and
$(\LL^{2k} \boxtensor \LL^{2k})_{(p,q)}
                 = \LL^{2k}_p \tensor \LL^{2k}_q$:
\begin{equation}
  \label{eqHpq}
  h_{p,q} : \LL^k_{p+q} \tensor \LL^k_{p-q}
     \to \LL^{2k}_{p} \tensor \LL^{2k}_{q}.
\end{equation}
We can now finally invoke Mumford's fundamental addition formula on
p.~324 of~\cite{MumfordEqnsI},
to obtain
the following formula.  In contrast to the analytic situation,
this result needs $k$ to be even, unless (as in Theorem~6
of~\cite{KempfLinearSystems}) we allow a possible modification to the
original~$\LL$.
\begin{theorem}
  \label{thmAlgebraicAdditionFormula}
Suppose that $k$ is both even and invertible in $\Kbar$.
Choose a symplectic decomposition $\AV[2k] = A_{2k} \directsum B_{2k}$;
this also determines a decomposition $\AV[k] = A_k \directsum B_k$,
with $B_k = [2]B_{2k} = B_{2k} \intersect \AV[k]$, and an analogous
$A_k$.  Then there exist compatible theta structures on $\calG_k$ and
$\calG_{2k}$, which give rise to specific choices of bases
$\{\theta_{[k],b}\}_{b \in B_k}$
and $\{\theta_{[2k],d}\}_{d \in B_{2k}}$, respectively for 
$H^0(\AV,\LL^k)$ and $H^0(\AV,\LL^{2k})$, that satisfy, for all
$p,q \in \AV$:
\begin{equation}
  \label{eqAlgebraicThetaAddition}
  \begin{split}
    h_{p,q}&(\theta_{[k],b_1}(p+q) \tensor \theta_{[k],b_2}(p-q)) \\
    &=
    \sum_{c \in B_{2k} \intersect \AV[2]}
      \theta_{[2k],d_1 + d_2 + c}(p) \tensor \theta_{[2k],d_1 - d_2 + c}(q).\\
  \end{split}
\end{equation}
Here $b_1, b_2 \in B_k$, and we choose $d_1, d_2 \in B_{2k}$
satisfying $2d_1 = b_1, 2d_2 = b_2$.  (The sum on the right hand side
is independent of the choices of $d_1, d_2$.)
\end{theorem}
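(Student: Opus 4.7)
The plan is to follow Mumford's strategy of reducing the identity to Heisenberg-group equivariance. Both sides of \eqref{eqAlgebraicThetaAddition}, read as functions of $(p,q) \in \AV \times \AV$, lie in the space $H^0(\AV \times \AV, \LL^{2k} \boxtensor \LL^{2k})$: the right-hand side manifestly so, and the left-hand side because $h$ identifies $\xi^*(\LL^k \boxtensor \LL^k)$ with $\LL^{2k} \boxtensor \LL^{2k}$. Since this space has as canonical basis $\{\theta_{[2k],d} \boxtensor \theta_{[2k],d'}\}_{d,d' \in B_{2k}}$, the left-hand side admits a unique expansion
\[
h_{p,q}\bigl(\theta_{[k],b_1}(p+q) \tensor \theta_{[k],b_2}(p-q)\bigr)
= \sum_{d, d' \in B_{2k}} c_{d,d'}(b_1,b_2)\,\theta_{[2k],d}(p) \tensor \theta_{[2k],d'}(q),
\]
and the whole content of the theorem is the identification of these coefficients.

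To pin down the support of the expansion, I would exploit the equivariance of $h$ under the theta groups. A translation $(p,q) \mapsto (p+x, q+x)$ with $x \in A_{2k}$ corresponds via $\xi$ to translating $p+q$ by $2x \in A_k$ while leaving $p-q$ fixed; using \eqref{eqThetaGroupActionOnAlgebraicThetas} on the left-hand side and on each basis element on the right, the resulting character equations force $d+d'$ to lie in a specific coset of $\ker([2]\!:\!B_{2k} \to B_k) = B_{2k} \intersect \AV[2]$. An analogous antidiagonal translation pins down the coset of $d - d'$. Choosing representatives $d_1, d_2 \in B_{2k}$ with $2d_1 = b_1$ and $2d_2 = b_2$, this cuts the support down to exactly $\{(d_1+d_2+c,\ d_1-d_2+c) : c \in B_{2k} \intersect \AV[2]\}$, matching the right-hand side.

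The remaining task is to compare the coefficients $c_{d_1+d_2+c,\, d_1-d_2+c}$ for the different values of $c \in B_{2k} \intersect \AV[2]$. Applying further elements of $\tilde{B}_{2k}$ whose images in $B_k$ under $[2]$ are trivial permutes these coefficients transitively, and the compatibility between the theta structures on $\calG_k$ and $\calG_{2k}$ is set up precisely so that the permutation acts trivially on coefficients, forcing them all to be equal. Choosing the compatible theta structures as in Mumford's construction then normalizes this common coefficient to $1$, yielding the formula as stated.

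The main obstacle is the bookkeeping of the Weil-pairing compatibilities and the normalization of the theta structures; in particular, the evenness of $k$ is what enables a clean descent from $\calG_{2k}$ to $\calG_k$ and the precise matching of the bases $\{\theta_{[k],b}\}$ and $\{\theta_{[2k],d}\}$. Ultimately the argument unpacks Mumford's addition formula on p.~324 of~\cite{MumfordEqnsI}; checking that the isomorphisms $h = h_{p,q}$ and the basis normalizations fit together in exactly the way demanded by \eqref{eqAlgebraicThetaAddition} is the only substantive work.
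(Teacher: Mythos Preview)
Your proposal is correct and takes essentially the same approach as the paper: both rest on Mumford's fundamental addition formula (p.~324 of~\cite{MumfordEqnsI}), with the evenness of $k$ ensuring that $\LL^k$ is totally symmetric so that Mumford's construction of compatible symmetric theta structures applies. The paper's proof is a pure citation-and-translation, whereas you additionally sketch the theta-group equivariance argument that underlies Mumford's formula; this is a welcome elaboration but not a different route.
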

\begin{proof}
The only point to make here is that $\LL^k$ is totally symmetric
because $k$ is even, so we can apply Mumford's construction of
symmetric compatible theta structures.  One then translates Mumford's
fundamental addition formula, while carefully working
through all the definitions, and following the normalizations
of~\cite{MumfordEqnsI} and our concrete expressions above for the
isomorphisms.  As Mumford points out, one can choose the isomorphisms
so as to make the common constant factor $\lambda$ in his formula
equal to~$1$. 

The formula~\eqref{eqAlgebraicThetaAddition} is parallel
to the statements in Theorem~8 of~\cite{KempfLinearSystems} and
Theorem~6.5 of~\cite{KempfBook}, but the normalizations in Kempf
appear to be slightly different (for example, an element of the theta
group in~\cite{KempfBook} is defined using an isomorphism from
$T_x^*\LL^k$ to $\LL^k$ and not its inverse), so we preferred to
follow scrupulously the treatment in~\cite{MumfordEqnsI}.
\end{proof}
\begin{remark}
In the works cited above, the compatible theta structures can be set
up so that the actions of both negation $[-1]:\AV \to \AV$ and
doubling $[2]:\AV \to \AV$
become transparent.  We will not need these in full generality.  We
will however treat ``by hand'' a specific case of the action of doubling,
which does not seem to be included in the results
of~\cite{MumfordEqnsI} because the original line bundle $\LL$ is not
totally symmetric.  The result on doubling that we need is included 
in~\cite{KempfLinearSystems} and~\cite{KempfMultiplication}, but we
will make it more explicit below.
\end{remark}

From now on, we limit ourselves to
using~\eqref{eqAlgebraicThetaAddition} only when $k=2$.  We first note
the standard ``diagonalization'' of~\eqref{eqAlgebraicThetaAddition}
obtained by introducing a character $\chi: B_2 \to \Kbarstar$ and
forming the linear combination
$\sum_{c \in B_2}
  \chi(c)
  \theta_{[2], b+c}
    \boxtensor
  \theta_{[2], c}$.
The character $\chi$, which takes values in $\{\pm 1\}$, can
be written as $\chi(c) = e_4(c,\alpha)$ for some choice of $\alpha \in
A_4$; the character $\chi$ depends only on the coset of $\alpha$ in
$A_4/A_2$.  Indeed, $e_4(\cdot,\cdot)$ is isotropic on the points of
$\AV[2] = A_2 \directsum B_2 \subset \AV[4] = A_4 \directsum B_4$.
Let us also write $b = 2d$ for some choice of $d \in B_4$.  Applying
$\xi^*$ to our linear combination above, followed by the isomorphisms
$h_{p,q}$, we obtain from~\eqref{eqAlgebraicThetaAddition}
the following important identity for all $p,q \in \AV$:
\begin{equation}
  \label{eqBasicAdditionFormula}
  h_{p,q}(\sum_{c \in B_2}
     e_4(c, \alpha)
     \theta_{[2],2d+c}(p+q) \tensor
     \theta_{[2],c}(p-q)
     )
   =
     F_{d,\alpha}(p) \tensor F_{d,\alpha}(q),
\end{equation}
where for $d \in B_4$ and $\alpha \in A_4$ we have
\begin{equation}
  \label{eqDefOfF}
  F_{d,\alpha} = \sum_{c \in B_2} e_4(c,\alpha) \theta_{[4],d+c}
  \in H^0(\AV,\LL^4).
\end{equation}
As already observed, $F_{d,\alpha}$ is unchanged if we add an element
of $A_2$ to $\alpha$; note however that if we add an element
$b' \in B_2$ to $d$, then $F_{d + b', \alpha} = e_4(b',\alpha)
F_{d,\alpha}$ (remember that $e_4(b',\alpha) \in \{ \pm 1 \}$, because
$b'$ is $2$-torsion).  It is easy to see that the $F_{d,\alpha}$ form
a basis of $H^0(\AV,\LL^4)$ as $\alpha$ and $d$ run over any fixed
choice of representatives for each coset in $A_4/A_2$ and $B_4/B_2$.

We now wish to relate $F_{d,\alpha}$ to the pullback via $[2]$ of a
translate $T_x^*(\theta_{[1],0}) \in H^0(\AV,T_x^*\LL)$, for a
suitable $x \in \AV[2]$.  This is a known result, and is easy to show
analytically, when $\Kbar = \C$,  The general case is covered in the
next to last paragraph of~\cite{KempfMultiplication}, immediately
following the proof of Theorem~7 there.  We state and prove the result
in the language of this article.  Recall that our choice of symmetric
line bundle $\LL$ is only unique in the first place up to translation
by an element of $\AV[2]$.  Moreover, since $\LL$ is symmetric, we
have $[2]^*\LL \isomorphic \LL^4$.

\begin{proposition}
  \label{propositionFixLL}
Up to translating $\LL$ by a $2$-torsion point, we can identify
$F_{0,0}$ with a nonzero constant multiple of $[2]^* \theta_{[1],0}$.
To absorb the constant, we choose a specific
isomorphism $j: [2]^*\LL \to \LL^4$, giving rise to a family of
isomorphisms $j_p: ([2]^*\LL)_p = \LL_{2p} \to \LL_p^4$, for which
\begin{equation}
  \label{eqF00Formula}
  F_{0,0}(p) = j_p(\theta_{[1],0}(2p)),
  \qquad\qquad
  p \in \AV.
\end{equation}
\end{proposition}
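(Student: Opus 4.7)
The plan is to characterize $F_{0,0}$ by its transformation properties under an abelian subgroup of the theta group $\calG_4$ that is a lift of $\AV[2] \subset \AV[4]$, and then invoke descent along the isogeny $[2]$ to recognize $F_{0,0}$ as a pullback from a translate of $\LL$.

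First I would verify directly from~\eqref{eqThetaGroupActionOnAlgebraicThetas} and the definition~\eqref{eqDefOfF} that $F_{0,0} = \sum_{c \in B_2} \theta_{[4],c}$ is fixed by every element of $\tilde{A}_2$ and every element of $\tilde{B}_2$. For $a \in A_2$ the action sends this sum to $\sum_{c \in B_2} e_4(c,a)\,\theta_{[4],c}$, which equals $F_{0,0}$ for the same reason as the already-noted $A_2$-invariance of $F_{d,\alpha}$ in $\alpha$, namely that $e_4$ restricted to $B_2 \times A_2$ is trivial. For $b \in B_2$ the action permutes the summands via $\theta_{[4],c} \mapsto \theta_{[4],c-b}$, so the sum is merely rearranged. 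Moreover, since $e_4|_{A_2 \times B_2} = 1$, the commutator formula~\eqref{eqWeilPairing} shows that $\tilde{A}_2$ and $\tilde{B}_2$ commute inside $\calG_4$, so $\tilde{A}_2 \cdot \tilde{B}_2$ is a commutative subgroup of $\calG_4$ that maps isomorphically onto $\AV[2]$ and that fixes $F_{0,0}$.

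Next I would invoke descent along $[2]:\AV \to \AV$: for any symmetric line bundle $M$ with $[2]^*M \isomorphic \LL^4$, the pullback identifies $H^0(\AV,M)$ with the subspace of $H^0(\AV,\LL^4)$ fixed by a particular lift of $\ker[2] = \AV[2]$ into $\calG_4$ determined by the chosen isomorphism. Such an $M$ is precisely some $T_x^*\LL$ with $x \in \AV[2]$, and as $x$ varies the corresponding lift is twisted through all $16$ characters of $\AV[2]$. Since the Schr\"odinger representation of $\calG_4$ on $H^0(\AV,\LL^4)$ is irreducible of dimension~$16$, its restriction to the abelian subgroup $\tilde{A}_2 \cdot \tilde{B}_2$ decomposes into the $16$ one-dimensional character eigenspaces, and these are exactly realized as the $16$ pullback images from the translates $T_x^*\LL$. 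The trivial-character eigenspace, which must be the line $\Kbar \cdot F_{0,0}$, is thus the pullback image from some specific translate $T_{x_0}^*\LL$. Replacing $\LL$ throughout by $T_{x_0}^*\LL$ (and relabeling the theta functions accordingly), we may therefore assume $F_{0,0} = \lambda \cdot [2]^*\theta_{[1],0}$ for some $\lambda \in \Kbarstar$.

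To finish, absorbing $\lambda$ is immediate: $\Hom([2]^*\LL, \LL^4)$ is a one-dimensional $\Kbar$-vector space, so rescaling any given isomorphism by $\lambda^{-1}$ yields a $j$ satisfying~\eqref{eqF00Formula} pointwise. The main obstacle is matching our lift $\tilde{A}_2 \cdot \tilde{B}_2$, built from Mumford's theta structure on $\calG_4$, with the lift of $\AV[2]$ arising from the $\AV[2]$-linearization on $[2]^*(T_x^*\LL)$ for the correct $x$; this compatibility between the two normalizations is essentially what is verified in the argument from~\cite{KempfMultiplication} cited in the statement.
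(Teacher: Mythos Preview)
Your argument is correct and is essentially the alternative proof that the paper sketches in the Remark immediately following this proposition. The paper's own proof proceeds differently: after observing (as you do) that $F_{0,0}$ is invariant under the lift $\tilde{A}_2\cdot\tilde{B}_2$ of $\AV[2]$ into $\calG_4$, it descends $\LL^4$ along $[2]$ to obtain some line bundle $\tilde{\LL}$, and then shows directly that $\tilde{\LL}\isomorphic T_x^*\LL$ for some $x\in\AV[2]$ by a Picard-group argument---namely, $\LL^{-1}\tensor\tilde{\LL}$ lies in $\ker([2]^*)$, the map $[2]^*$ is multiplication by~$4$ on the torsion-free group $NS(\AV)$, hence $\LL^{-1}\tensor\tilde{\LL}\in\text{Pic}^0\AV$, and a short computation forces $x$ to be $2$-torsion. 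Your route instead identifies $\tilde{\LL}$ among the $T_x^*\LL$ by counting: there are exactly $16$ lifts of $\AV[2]$ into $\calG_4$ (a torsor under $\Hom(\AV[2],\{\pm1\})$), and the $16$ translates $T_x^*\LL$ with $x\in\AV[2]$ give $16$ pairwise nonisomorphic descents (since $\LL$ is a principal polarization), hence exhaust them. This is exactly the content of the paper's Remark. One small point: your sentence ``Such an $M$ is precisely some $T_x^*\LL$ with $x\in\AV[2]$'' reads as though you are assuming the conclusion; it is cleaner to derive it from the counting you do in the next sentence. Also, your closing hedge about ``the main obstacle'' is unnecessary---once the counting shows that every lift arises from some $T_x^*\LL$, the particular lift $\tilde{A}_2\cdot\tilde{B}_2$ is automatically matched, with no further normalization to check.
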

\begin{proof}
Consider $2$-torsion elements $a \in A_2 \subset A_4$ and
$b \in B_2 \subset B_4$, as well as their lifts $\tilde{a}, \tilde{b}
\in \calG_4$.  These lifts generate a partial splitting of the extension
$\calG_4$ over $\AV[2] = A_2 \directsum B_2$, which is a maximal
isotropic subgroup of $\AV[4]$ under the Weil pairing $e_4$. 
The section $F_{0,0}$ is invariant under the action of these
lifts from $\AV[2]$. 
Now this partial splitting gives descent data for
$\LL^4$ under all translations by $\AV[2]$: see pp.~290--291
of~\cite{MumfordEqnsI}.  The descent produces a line
bundle $\tilde{\LL}$ on $\AV$, and an isomorphism $j: [2]^*
\tilde{\LL} \to \LL^4$.  We can also descend $F_{0,0}$ to a nonzero global
section of $\tilde{\LL}$.  The proof will be complete once we show
that $\tilde{\LL}$ is in fact a translate $T_x^*\LL$, for some $x \in
\AV[2]$, for we then replace $\LL$ by $\tilde{\LL}$, and we know that
(the new) $H^0(\AV,\LL)$ is one-dimensional, with basis
$\theta_{[1],0}$.  We finally adjust $j$ by a constant to
obtain~\eqref{eqF00Formula}.

To show our assertion about $\tilde{\LL}$, note first that $[2]^*\LL$
and $[2]^*\tilde{\LL}$ are both isomorphic to $\LL^4$.  Hence
$\LL^{-1} \tensor \tilde{\LL}$ is in the kernel of the homomorphism
$[2]^*: \text{Pic} \AV \to \text{Pic} \AV$.  The effect of $[2]^*$ on
the N\'eron-Severi group is to multiply by~$4$, but $NS(\AV)$ is
torsion-free.  Hence $\LL^{-1} \tensor \tilde{\LL}$ belongs to
$\text{Pic}^0 \AV$, which implies that $\tilde{\LL}\tensor \LL^{-1}$
is isomorphic to $T_x^* \LL \tensor \LL^{-1}$ for some $x \in \AV$,
via the usual isomorphism $\Phi: \AV \to \text{Pic}^0 \AV$ given
by $\Phi(x) = T_x^* \LL \tensor \LL^{-1}$.  We have thus shown that
$\tilde{\LL} \isomorphic T_x^*\LL$.  Take any $y \in \AV$ with $2y = x$.
We know that $\LL^4 \isomorphic [2]^* \tilde{\LL} \isomorphic [2]^*
T_x^*\LL$, and this last is isomorphic to $T_y^* [2]^* \LL \isomorphic
T_y^* \LL^4$.  We deduce that $4y = 0$ and hence that
$x$ is a $2$-torsion point.
\end{proof}
\begin{remark}
One can give a more conceptual proof of the above proposition.  The
different choices of 
lifts from $\AV[2] \to \calG_4$ correspond to different ways to
descend $\LL^4$ along $\AV[2]$ to varying line bundles.  One of these
descents gives $\tilde{\LL}$, while another gives $\LL$.  But any two
lifts $\AV[2] \to \calG_4$ differ by a character of $\AV[2]$ with
values in $\{\pm 1\}$, and such a character can be obtained by a Weil
pairing with a fixed $2$-torsion point which corresponds to our $x$.
Essentially the same argument about changing $\tilde{\LL}$ to $\LL$
appears in Lemma~5 of~\cite{KempfLinearSystems} (see also Theorem~6 there).
Compare also to property~(5) on p.~228
of~\cite{MumfordAV} (under ``Functorial properties of $e^L$'').
\end{remark}

\begin{corollary}
  \label{corFdalpha}
Let $d \in B_4$ and $\alpha \in A_4$.  Define the element $\tilde{x} =
(x, \phi) = (\widetilde{-d}) (\tilde{\alpha}) \in \calG_4$; hence $x = -d +
\alpha$ and $\phi: \LL^4 \to T_{-d+\alpha}^* \LL^4$ is the
corresponding isomorphism.  We then have, for all $p \in \AV$:
\begin{equation}
  \label{eqFdalphaFormula}
  F_{d,\alpha}(p)
   = \phi_{p + d - \alpha} \compose j_{p + d - \alpha}
       ( \theta_{[1],0}(2p + 2d - 2\alpha) ).
\end{equation}
\end{corollary}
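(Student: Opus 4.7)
The plan is to identify $F_{d,\alpha}$ as the image of $F_{0,0}$ under the action of $\tilde{x} = (\widetilde{-d})(\tilde{\alpha}) \in \calG_4$, and then apply Proposition~\ref{propositionFixLL} together with the concrete formula~\eqref{eqThetaGroupAction} for the theta group action. The whole computation is then essentially bookkeeping, tracking which lifts and signs appear where.

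First I would compute $\tilde{\alpha} * F_{0,0}$. Since $F_{0,0} = \sum_{c \in B_2} \theta_{[4],c}$ and $\alpha \in A_4$, the first formula in~\eqref{eqThetaGroupActionOnAlgebraicThetas} gives
\begin{equation*}
\tilde{\alpha} * F_{0,0} = \sum_{c \in B_2} e_4(c,\alpha)\, \theta_{[4],c}.
\end{equation*}
Next I would apply $\widetilde{-d}$, using the second formula in~\eqref{eqThetaGroupActionOnAlgebraicThetas} (which shifts the index by $-(-d) = d$), to obtain
\begin{equation*}
\widetilde{-d} * (\tilde{\alpha} * F_{0,0})
   = \sum_{c \in B_2} e_4(c,\alpha)\, \theta_{[4],c+d}
   = F_{d,\alpha},
\end{equation*}
so that $\tilde{x} * F_{0,0} = F_{d,\alpha}$ as claimed (here I use that the two actions compose to give the action of the product $\tilde{x}$ in $\calG_4$).

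Having done that, I would evaluate both sides at $p$ using the concrete description~\eqref{eqThetaGroupAction}: with $x = -d + \alpha$, we have $p - x = p + d - \alpha$, and the isomorphism $\phi$ attached to $\tilde{x}$ is the one specified in the statement. Thus
\begin{equation*}
F_{d,\alpha}(p) = (\tilde{x} * F_{0,0})(p) = \phi_{p+d-\alpha}\bigl( F_{0,0}(p+d-\alpha) \bigr).
\end{equation*}
Finally I would substitute the formula~\eqref{eqF00Formula} from Proposition~\ref{propositionFixLL} at the point $p + d - \alpha$, yielding
\begin{equation*}
F_{0,0}(p+d-\alpha) = j_{p+d-\alpha}\bigl(\theta_{[1],0}(2p+2d-2\alpha)\bigr),
\end{equation*}
which inserted into the previous display gives exactly~\eqref{eqFdalphaFormula}.

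The main thing to watch for is the sign convention: because~\eqref{eqThetaGroupAction} evaluates $s$ at $p-x$ rather than $p+x$, and because the shift operator $\widetilde{-d}$ in~\eqref{eqThetaGroupActionOnAlgebraicThetas} sends $\theta_{[4],b'}$ to $\theta_{[4],b'+d}$, the translate that appears in the final formula is $p+d-\alpha$ (not $p-d+\alpha$). There is no additional constant to chase, since Proposition~\ref{propositionFixLL} has already absorbed the ambiguity into the choice of $j$. Otherwise the proof is a direct two-line consequence of Proposition~\ref{propositionFixLL} and the explicit formulas~\eqref{eqThetaGroupAction} and~\eqref{eqThetaGroupActionOnAlgebraicThetas}.
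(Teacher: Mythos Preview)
Your proposal is correct and follows exactly the same approach as the paper's proof: identify $F_{d,\alpha} = \tilde{x} * F_{0,0}$ via~\eqref{eqThetaGroupActionOnAlgebraicThetas}, then evaluate using~\eqref{eqThetaGroupAction} and Proposition~\ref{propositionFixLL}. The paper compresses this into two sentences, but your expanded version unpacks precisely the same computation.
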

\begin{proof}
By~\eqref{eqThetaGroupActionOnAlgebraicThetas}, we have
$F_{d,\alpha} = \tilde{x}*F_{0,0}$.  Now
apply~\eqref{eqThetaGroupAction} to obtain the above formula.
\end{proof}

Our main use of~\eqref{eqFdalphaFormula} will be to determine
conditions under which $F_{d,\alpha}$ vanishes or not at certain points,
by reducing the question to whether $\theta_{[1],0}$ vanishes at the
corresponding points.  We first state a mostly standard result about
$\theta_{[1],0}$.

\begin{lemma}
  \label{lemmaTheta1Nonvanishing}
  Recall that $\AV$ is the Jacobian of a genus~$2$ curve~$\calC$, and
  that $0 \neq \theta_{[1],0} \in H^0(\AV,\LL)$, where $\LL$ is a
  symmetric line bundle giving the principal polarization.
  \begin{enumerate}
  \item
    Among the $16$ points of $\AV[2]$, we have that $\theta_{[1],0}$
    vanishes at precisely~$6$ of them (and is nonzero at the remaining
    $10$ points).
  \item
    $\theta_{[1],0}$ does not vanish at any point $p \in \AV$
    whose order is precisely~$4$, in other words, for $p \in \AV[4] -
    \AV[2]$.
  \end{enumerate}
\end{lemma}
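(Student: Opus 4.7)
The plan is to realize the zero divisor of $\theta_{[1],0}$, up to translation by a 2-torsion element, as the Abel--Jacobi image of $\calC$ in $\AV$, and then to use Riemann--Roch on $\calC$ to compute its intersection with $\AV[2]$ and $\AV[4]$.

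First I would fix a Weierstrass point $w_0 \in \calC$ (which exists over $\Kbar$) and let $W_1 \subset \AV = \text{Pic}^0 \calC$ be the image of the Abel--Jacobi map $q \mapsto [q - w_0]$. The hyperelliptic relation $q + \iota(q) \sim 2 w_0$ gives $-[q - w_0] = [\iota(q) - w_0]$, so $W_1$ is a symmetric effective divisor representing the principal polarization on $\AV$. Since any two symmetric theta divisors for the same principal polarization differ by translation by a $2$-torsion element, the zero locus $\Theta$ of $\theta_{[1],0}$ equals $W_1 + e$ for some $e \in \AV[2]$.

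Next I would compute $W_1 \intersect \AV[k]$ for $k \in \{2, 4\}$. A point $[q - w_0] \in W_1$ lies in $\AV[k]$ iff $k q \sim k w_0$, iff there exists $g \in L(k w_0)$ with $\Divisor(g) = k q - k w_0$, i.e., all $k$ zeros of $g$ are at $q$. Since $w_0$ is a branch point of the hyperelliptic double cover $\pi \colon \calC \to \Projective^1$, one has $\pi^*(\infty) = 2 w_0$, and a dimension count via Riemann--Roch shows $L(2 w_0) = \pi^* L(\infty)$ and $L(4 w_0) = \pi^* L(2 \cdot \infty)$. Thus $g = \pi^* h$ for some polynomial $h$ in the affine coordinate on $\Projective^1$, and $\Divisor(g) = \pi^*\Divisor(h)$ has $\iota$-symmetric support: zeros of $h$ at non-branch points contribute pairs $\{P, \iota(P)\}$, while zeros at branch points contribute a single Weierstrass point with doubled multiplicity. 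For $\Divisor(g)$ to be concentrated at a single point $q$, necessarily $q$ is a Weierstrass point and $h = c\,(x - \pi(q))^{k/2}$ up to scalar. Hence $W_1 \intersect \AV[k] = \{[w_i - w_0] : w_i \text{ Weierstrass}\}$ has exactly $6$ elements for both $k = 2$ and $k = 4$, each already in $\AV[2]$.

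Finally, translating by $e \in \AV[2] \subseteq \AV[4]$ preserves both $\AV[2]$ and $\AV[4]$, so $\Theta \intersect \AV[2] = (W_1 \intersect \AV[2]) + e$ has $6$ elements, proving (1); and $\Theta \intersect \AV[4] = (W_1 \intersect \AV[4]) + e = \Theta \intersect \AV[2]$ is entirely contained in $\AV[2]$, proving (2). The main technical point is the rigidity statement for hyperelliptic curves, namely that on a genus-$2$ curve a principal divisor of the form $k q - k w_0$ (all zeros concentrated at a single point) forces $q$ to be a Weierstrass point; the remainder of the argument is a formal translation between theta sections and the Abel--Jacobi picture.
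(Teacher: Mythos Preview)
Your proposal is correct and follows essentially the same approach as the paper: both identify the zero locus $\Theta$ as a $2$-torsion translate of the Abel--Jacobi image $\{[q-w_0]\}$, and both reduce the question to showing that the only points $q\in\calC$ with $[q-w_0]$ of order dividing~$4$ are the Weierstrass points. The paper carries this last step out in explicit coordinates (writing $\phi\in L(4w_0)$ as a quadratic in~$X$ and factoring), whereas you phrase it via $L(4w_0)=\pi^*L(2\cdot\infty)$ and the $\iota$-symmetry of pullback divisors; these are the same argument in slightly different language.
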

\begin{proof}
Write $\Theta$ for the vanishing locus of $\theta_{[1],0}$.  We know
that $\Theta$ is a
symmetric theta divisor on~$\AV$, so it is the image of
$\calC$ under a suitable Abel-Jacobi map into $\AV$.  Let
$\{w_0, \dots, w_5\} \subset \calC$ be the six Weierstrass points.  Then
one choice of symmetric theta divisor is the set
$\calC_0 = \calC - w_0 \subset \AV$, by which we mean the set of all divisor
classes of the form $[v-w_0] \in \AV = \text{Pic}^0 \calC$,
parametrized by $v \in \calC$.  Then
$\OO_\AV(\calC_0)$ is a symmetric line bundle in the algebraic
equivalence class of $\LL$.  Hence $\LL$, which is also
symmetric, is isomorphic to the translate of $\OO_\AV(\calC_0)$ by
some $2$-torsion point $x \in \AV[2]$.  This means that $\Theta$ is the
set of points $\{ [v-w_0] + x \mid v \in \calC \}$, where $v$ varies on
$\calC$ and $x \in \AV[2]$ is a fixed $2$-torsion point.

Let us prove the first assertion above.
For a $2$-torsion point $p \in \AV[2]$ to lie on $\Theta$, we require
$p-x$ to be a $2$-torsion point of the form $[v - w_0]$.  But the only
such $2$-torsion points
are the six points
$\{[w_i - w_0] \mid 0 \leq i \leq 5\}$.  This gives us precisely six
choices of $p$ where $\theta_{[1],0}(p) = 0$.

As for the second assertion, it says that when $p$ has exact order
$4$, then $p-x$ cannot be of the form $[v - w_0]$ for
$v \in \calC$.  Note that in fact $p-x$ also has exact order~$4$.
Suppose to the contrary that $[v - w_0]$ were a point of 
exact order~$4$.  Then $4v - 4w_0$ would be the divisor of an element
$\phi$ of the function field $\Kbar(\calC)$.  Let us take a model for
$\calC$ over $\Kbar$ given by an equation of the form $Y^2 =
(X-a_1)\cdots(X-a_5)$, where $w_0$ is the point at infinity, and where
for $i \geq 1$, $w_i = (a_i, 0)$ for distinct $a_1, \dots, a_5 \in
\Kbar$.  Then $X$ has a double pole at $w_0$, and $Y$ has a quintuple
pole at $w_0$.  Now the only singularity of $\phi$ is a quadruple pole
at $w_0$, so $\phi = c_2 X^2 + c_1 X + c_0$ with $c_2 \neq 0$; we can
harmlessly assume $c_2 = 1$.  Factor $\phi = (X-b_1)(X-b_2)$, with
$b_1, b_2 \in \Kbar$.  If $b_1 \neq b_2$,
then $\phi$ cannot have a quadruple zero at just one point
$v \in \calC$; thus $\phi = (X-b_1)^2$.  But then $v$ has coordinates
$(b_1,c)$ for some $c \in \Kbar$; if we had $c \neq 0$, then $\phi$
would also vanish at a second point $(b_1, -c) \in \calC$.  Thus the
only possibility is to have $c=0$, and $b_1 \in \{a_1, \dots, a_5\}$.
But then $v = (b_1,0)$ would be one of the Weierstrass points, and
$[v-w_0]$ would be a point of order~$2$, not of exact order~$4$.
\end{proof}

The first part of the following corollary is basically a precise
statement in our setting of the difference between even and odd
theta-characteristics for the curve~$\calC$.

\begin{corollary}
  \label{corFdalphaNonvanishing}
  Let $d \in B_4$ and let $\alpha \in A_4$.
  \begin{enumerate}
  \item
    The value $F_{d,\alpha}(0)$ is zero when $e_4(2d,\alpha) = -1$,
    and is nonzero otherwise (when $e_4(2d,\alpha) = 1$).
  \item
    Let $q \in \AV$ be a point of exact order~$8$.  Then
    $F_{d,\alpha}(q) \neq 0$. 
  \end{enumerate}
\end{corollary}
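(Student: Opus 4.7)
My plan is to apply formula~\eqref{eqFdalphaFormula} of Corollary~\ref{corFdalpha} to convert both parts into vanishing questions for $\theta_{[1],0}$, to which Lemma~\ref{lemmaTheta1Nonvanishing} can then be applied. The key observation is that $\phi_{p+d-\alpha} \compose j_{p+d-\alpha}$ is an isomorphism between one-dimensional $\Kbar$-vector spaces, so $F_{d,\alpha}(p) = 0$ if and only if $\theta_{[1],0}(2p + 2d - 2\alpha) = 0$.

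For part~(2), I take $p = q$ of exact order~$8$ and compute the order of the point $2q + 2d - 2\alpha$: it is killed by~$4$ because $8q = 4d = 4\alpha = 0$, while $2(2q + 2d - 2\alpha) = 4q \neq 0$ since $q$ has exact order~$8$. Hence $2q + 2d - 2\alpha$ lies in $\AV[4] \setminus \AV[2]$, and Lemma~\ref{lemmaTheta1Nonvanishing}(2) yields $\theta_{[1],0}(2q + 2d - 2\alpha) \neq 0$, so $F_{d,\alpha}(q) \neq 0$.

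For part~(1), setting $p = 0$ reduces the claim to showing that $\theta_{[1],0}(2d - 2\alpha) = 0$ if and only if $e_4(2d, \alpha) = -1$. Since $2d \in B_2$ and $2\alpha \in A_2$, and $(d,\alpha) \mapsto 2d - 2\alpha$ defines a bijection from coset representatives of $(B_4/B_2) \times (A_4/A_2)$ onto $\AV[2]$, a direct count shows that exactly six of the sixteen pairs $(d,\alpha)$ satisfy $e_4(2d,\alpha) = -1$: for each of the three nonzero $2d \in B_2$, the character $e_4(2d,\cdot)$ on $A_4$ factors through $A_4/A_2$ as a nontrivial character valued in $\{\pm 1\}$, hence takes the value $-1$ on exactly two of the four cosets. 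This matches the count of six zeros of $\theta_{[1],0}$ on $\AV[2]$ furnished by Lemma~\ref{lemmaTheT1Nonvanishing}(1); what remains is to match the two six-point sets explicitly.

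The cleanest route is through the standard quadratic refinement $Q_\LL: \AV[2] \to \{\pm 1\}$ of the Weil pairing $e_2$ associated to the symmetric structure on $\LL$; its $-1$-locus coincides with the vanishing set of $\theta_{[1],0}$ on $\AV[2]$. The defining relation $Q_\LL(x+y) = Q_\LL(x) Q_\LL(y) e_2(x,y)$, together with the pairing identities $e_2(2d, 2\alpha) = e_4(d,\alpha)^2 = e_4(2d, \alpha)$ (the first by the standard compatibility $e_4(x,y)^2 = e_2(2x,2y)$, the second by bilinearity of $e_4$), reduces the required equivalence to the assertion that $Q_\LL|_{A_2} \equiv Q_\LL|_{B_2} \equiv +1$. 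I expect this Lagrangian isotropy to be the main obstacle: it amounts to showing that the particular $\LL$ pinned down in Proposition~\ref{propositionFixLL}, namely the descent of $\LL^4$ along the lifts of $\AV[2] = A_2 \directsum B_2$, is precisely the symmetric line bundle whose associated quadratic form is trivial on both Lagrangians. This should be verifiable either by tracking the $[-1]$-action on the fibers $\LL^4_x$ at $x \in A_2 \cup B_2$ through the descent data, or by appealing to the analogous compatibility statements in~\cite{MumfordEqnsI} and~\cite{KempfMultiplication}.
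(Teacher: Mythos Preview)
Your argument for part~(2) is correct and matches the paper's one-line reduction via~\eqref{eqFdalphaFormula} to Lemma~\ref{lemmaTheta1Nonvanishing}(2).

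For part~(1), your route through the quadratic refinement $Q_\LL$ is conceptually sound but, as you yourself flag, it leaves open the verification that $Q_\LL$ vanishes on both Lagrangians $A_2$ and $B_2$ for the \emph{specific} translate of $\LL$ pinned down in Proposition~\ref{propositionFixLL}. This is not automatic: among the sixteen symmetric line bundles giving the principal polarization, only one has its associated Arf form trivial on $A_2 \directsum B_2$, and identifying the descent of $\LL^4$ along the given level structure with that particular bundle requires exactly the kind of tracking through Mumford's symmetric theta structures that you gesture at but do not carry out. So as written your proof of part~(1) has a real gap.

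The paper sidesteps this entirely with a more elementary trick. Setting $q=0$ in~\eqref{eqBasicAdditionFormula} yields
\[
h_{p,0}\Bigl(\sum_{c\in B_2} e_4(c,\alpha)\,\theta_{[2],2d+c}(p)\tensor\theta_{[2],c}(p)\Bigr) = F_{d,\alpha}(p)\tensor F_{d,\alpha}(0).
\]
Call the sum inside the parentheses $S$. Since multiplication in the one-dimensional fiber $\LL_p^2$ is commutative, one can swap the two tensor factors in each term, reindex by $c' = c + 2d$, and obtain $S = e_4(2d,\alpha)\,S$. Thus $e_4(2d,\alpha) = -1$ forces $S = 0$ identically, hence $F_{d,\alpha}(0) = 0$ (as $F_{d,\alpha}$ is not the zero section). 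This directly establishes one implication and exhibits six $2$-torsion points where $\theta_{[1],0}$ vanishes; Lemma~\ref{lemmaTheta1Nonvanishing}(1) says there are only six, so the converse follows by your counting. The paper's argument thus never needs to identify which quadratic form $Q_\LL$ corresponds to the chosen~$\LL$.
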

\begin{proof}
  The second assertion follows directly from~\eqref{eqFdalphaFormula}
  and the second part of Lemma~\ref{lemmaTheta1Nonvanishing}.
  Let us prove the first assertion.  Observe first that $e_4(2d,\alpha) =
  e_2(2d,2\alpha)$, with $2\alpha \in A_2$ and $2d \in B_2$.
  Moreover, there are six pairs $(b,a) \in B_2 \times A_2$ for which
  $e_2(b,a) = -1$; namely, to each of the three nonzero choices of
  $b \in B_2$, there correspond precisely two choices of $a \in A_2$
  that give a nontrivial Weil pairing with~$b$.  Now put
  $q=0$ in~\eqref{eqBasicAdditionFormula} to obtain
  \begin{equation}
    \label{eqMultiplicationL2}
      h_{p,0}(\sum_{c \in B_2}
     e_4(c, \alpha)
     \theta_{[2],2d+c}(p) \tensor \theta_{[2],c}(p)
     )
   =
     F_{d,\alpha}(p) \tensor F_{d,\alpha}(0).
  \end{equation}
  The expression $S$ within parentheses on the left hand side is a
  linear   combination of products of two sections of 
  $\LL^2$.  Multiplication of two sections of the same line bundle is
  commutative, as is familiar over $\C$
  (for example, set $w=0$ in~\eqref{eqnThetaAddition}).
  Here is a pedantic proof of
  commutativity in our algebraic context: the fiber $V = \LL_p^2$ is
  a one-dimensional $\Kbar$-vector space, so for $v,v' \in V$ we have
  $v \tensor v' = v' \tensor v$ in the one-dimensional space
  $V \tensor V = \LL_p^4$.  Using the fact that $4d = 0$, we deduce
  that 
  \begin{equation}
    \label{eqRearrangeMult}
    \begin{split}
      S &:= \sum_{c \in B_2} e_4(c, \alpha)
      \theta_{[2],2d+c}(p) \tensor \theta_{[2],c}(p)\\
      &= \sum_{c \in B_2} e_4(c, \alpha)
      \theta_{[2],c}(p) \tensor \theta_{[2],2d+c}(p)\\
      &= \sum_{c' \in B_2} e_4(2d+c', \alpha)
      \theta_{[2],2d+c'}(p) \tensor \theta_{[2],c'}(p)\\
      &= e_4(2d, \alpha) S.\\
    \end{split}
  \end{equation}
  Thus when $e_4(2d,\alpha) = -1$, we must have $S = 0$ for all $p$,
  so $F_{d,\alpha}(p) \tensor F_{d,\alpha}(0) = 0$.
  Since $F_{d,\alpha}$ is a nonzero element of $H^0(\AV,\LL^4)$, we
  deduce that $F_{d,\alpha}(0) = 0$, and hence that
  $\theta_{[1],0}(2d-2\alpha) = 0$.  Now $2d - 2\alpha$ is a
  $2$-torsion point (and is equal to $2d + 2\alpha$), and we have just
  listed six $2$-torsion points where $\theta_{[1],0}$ vanishes.
  (These correspond to the odd theta-characteristics.)  By
  our lemma, these are the only such $2$-torsion points; hence, when
  $e_4(2d,\alpha) = 1$ (an even theta-characteristic), we must have
  $\theta_{[1],0}(2d-2\alpha) \neq 0$, hence $F_{d,\alpha}(0) \neq 0$.
\end{proof}

We remark incidentally that when $\AV$ is not a Jacobian, but rather a
principally polarized abelian surface that is the product of two
elliptic curves, then $\theta_{[1],0}$ vanishes at one of the even
theta-characteristics, so vanishes at precisely seven $2$-torsion
points.  Over $\C$, we can see this by taking
$\Omega = \twomatr{\tau_1}{0}{0}{\tau_2}$, in which case the analytic
$\theta_{1,0}$ vanishes both at the six odd theta-characteristics and
at one even theta-characteristic, corresponding to
$2\alpha = \frac{1}{2}\binom{1}{1}$ and
$2d = \frac{1}{2}\Omega\binom{1}{1} = \frac{1}{2}\binom{\tau_1}{\tau_2}$.

\subsection*{The embedding into $\Projective^3 \times \Projective^3$}
We first describe the Kummer map $\kappa: \AV \to \Projective^3$ that
is associated to $\LL^2$.  In our isotropic subgroup $A_4 \subset \AV[4]$,
we fix a point $D_1$ of exact order~$4$.  Let $E_1 = 2D_1 \in A_2
\isomorphic (\Z/2\Z)^2$, let $E_2 \in A_2$ be a second $2$-torsion
point, and define $E_3 = E_1 + E_2$; hence
$A_2 = \{ 0, E_1, E_2, E_3\}$.  (In Section~\ref{section4}, we
will also write $E_0 = 0$, and take a point $D_2$ with $2D_2 = E_2$,
then write $D_3 = D_1 + D_2$; however, the $4$-torsion points $D_2$ and
$D_3$ will not belong to $A_4$.)  We also assign 
names to the points of $B_2$ as
$B_2 = \{0=b_{00}, b_{10}, b_{01}, b_{11} \}$,
according to their Weil pairing with $E_1$ and $E_2$:
\begin{equation}
  \label{eqDefbij}
  e_2(b_{ij},E_1) = (-1)^i, \qquad\qquad e_2(b_{ij},E_2) = (-1)^j.
\end{equation}
These names appear briefly here in~\eqref{eqDefQnew}, and
they will make a later appearance in Lemmas \ref{lemmaCommon2d2alpha}
and~\ref{lemmaQzeroNonvanishing}.
Using the above notation, we rename our basis
$\{ \theta_{[2],b} \}_{b \in B_2}$ for $H^0(\AV,\LL^2)$ as
\begin{equation}
\label{eqDefQnew}
Q_{00} = \theta_{[2],b_{00}}, \qquad
Q_{10} = \theta_{[2],b_{10}}, \qquad
Q_{01} = \theta_{[2],b_{01}}, \qquad
Q_{11} = \theta_{[2],b_{11}}.
\end{equation}
Hence the componentwise actions of the lifts
$\widetilde{E_1}, \widetilde{E_2}, \widetilde{E_3} \in \calG_2$ on the
$4$-tuple of sections $(Q_{00},Q_{10},Q_{01},Q_{11})$ are given by
\begin{equation}
  \label{eqEtildeAction}
  \begin{split}
    \widetilde{E_1}*(Q_{00},Q_{10},Q_{01},Q_{11})
    & = (Q_{00},-Q_{10},Q_{01},-Q_{11}), \\
    \widetilde{E_2}*(Q_{00},Q_{10},Q_{01},Q_{11})
    & = (Q_{00},Q_{10},-Q_{01},-Q_{11}),\\
    \widetilde{E_3}*(Q_{00},Q_{10},Q_{01},Q_{11})
    & = (Q_{00},-Q_{10},-Q_{01},Q_{11}).\\
  \end{split}
\end{equation}
The Kummer map is then
\begin{equation}
  \label{eqDefKappa}
  \kappa(p) = [Q_{00}(p), Q_{10}(p), Q_{01}(p), Q_{11}(p)] \in \Projective^3,
  \qquad\qquad p \in \AV.
\end{equation}
Note carefully the notation within square brackets for projective coordinates.
Here the values $Q_j(p)$ for $j \in \{00,01,10,11\}$ all belong to the
same one-dimensional space $\LL_p^2$, so strictly speaking the
projective coordinates that we wrote down are not elements of $\Kbar$.
However, their ``ratios'' give a well defined point in
$\Projective^3$, as usual.  (This uses the fact that $\LL^2$ is base point
free, so the values $\{Q_j(p)\}_j$ can never all vanish for the same $p$.)
It is standard that $\kappa(-p) = \kappa(p)$, and that the image of
$\kappa$ is in bijection with (the geometric points of)
$\AV/\{[\pm 1]\}$.
References for these facts are Chapter~3 of~\cite{CasselsFlynnBook},
Section~4.8 of~\cite{BirkenhakeLange}, and Section~10.4 of~\cite{KempfBook}.
The image of~$\kappa$ is called
the Kummer surface, and is the zero set of a
quartic equation in the $Q_j$, called the Kummer quartic. 
Our identities~\eqref{eqEtildeAction} imply the following identities
between projective points:
\begin{equation}
  \label{eqEprojectiveOnKappa}
  \begin{split}
    \kappa(p-E_1)
    &= [Q_{00}(p-E_1),Q_{10}(p-E_1),Q_{01}(p-E_1),Q_{11}(p-E_1)]\\
    &= [Q_{00}(p),-Q_{10}(p),Q_{01}(p),-Q_{11}(p)], \\
    \kappa(p-E_2)
    &= [Q_{00}(p),Q_{10}(p),-Q_{01}(p),-Q_{11}(p)], \\
    \kappa(p-E_3)
    &= [Q_{00}(p),-Q_{10}(p),-Q_{01}(p),Q_{11}(p)]. \\
  \end{split}
\end{equation}
This is because applying the same isomorphism $\phi_{p-E_i}$
(associated to $\widetilde{E_i}$) to all four coordinates does not
change the projective point.  Note of course also that $-E_i = E_i$.


We now translate everything by $D_1$.  Write
$\LL' = T_{D_1}^* \LL$ for the shifted line bundle.  Then
a basis $\{P_j\}_j$ for $H^0(\AV,(\LL')^2)$ is given by
$P_j = T_{D_1}^* Q_j$.  Concretely, for $j \in \{00,10,01,11\}$,
we have
\begin{equation}
\label{eqDefPnew}
P_j(p) = Q_j(p + D_1) \in  (\LL'_p)^2 = \LL_{p+D_1}^2.
\qquad\qquad p \in \AV.
\end{equation}
The projective map $\kappa'$ associated to $(\LL')^2$ sends $p$
to $\kappa'(p) = \kappa(p+D_1)$.  The image of $\kappa'$ is given by
the same Kummer quartic equation, this time in the $\{P_j\}$.
Combining $\kappa$ and $\kappa'$ gives our fundamental embedding
of~$\AV$ into $\Projective^3 \times \Projective^3$:
\begin{equation}
\label{eqEmbeddingP3xP3new}
p \mapsto (\kappa(p), \kappa'(p)) = ([P_i(p)]_i, [Q_j(p)]_j) \in
\Projective^3 \times \Projective^3. 
\end{equation}

For comparison with Section~\ref{section4}, where we pay attention to
rationality over a base field~$K$, we note that the $K$-rational
coordinates $([u_i]_i, [y_j]_j)$ on $\Projective^3 \times \Projective^3$
that appear in that section transform in a ``diagonalized'' way
under the $K$-rational isotropic $2$-torsion subgroup
$A_2 = \{E_0, E_1, E_2, E_3\}$, similarly to~\eqref{eqEtildeAction}
but with a slightly different order of coordinates:
see~\eqref{mapsonembedding}.  Each space where $A_2$
acts by a given character is one-dimensional; hence each $u_i$
from Section~\ref{section4} is then a different nonzero constant multiple
of a corresponding $Q_{i'}$, and similarly each $y_j$ is a multiple of
a corresponding $P_{j'}$.  These nonzero constants belong to~$\Kbar$,
but not necessarily to $K$ itself.  Moreover, the nonzero elements
$b \in B_2$ 
are in general not $K$-rational.  The lifts~$\tilde{b}$ of these  
elements to $\calG_2$ permute the $Q$ (and $P$) coordinates, but their
action on the $u_i$ (and $y_j$) combines a permutation with a
rescaling of each coordinate by a different factor in $\Kbarstar$;
essentially, the action is by an element of the normalizer of the
diagonal algebraic torus in $GL(4,\Kbar)$.

When $\Kbar = \C$, we can describe the $Q$ and $P$ coordinates
analytically by
\begin{equation}
  \label{eqDefPQ}
  \begin{split}
Q_{00} &= \theta_{2,(0,0)}(z), \qquad
Q_{10} = \theta_{2,(1,0)}(z), \\
Q_{01} &= \theta_{2,(0,1)}(z), \qquad
Q_{11} = \theta_{2,(1,1)}(z), \\
& \qquad P_j(z) = Q_j(z + (1/4,0)),\\
  \end{split}
\end{equation}
where we write vectors as rows instead of columns for
typographical convenience.

\subsection*{Relations between the $P$ and $Q$ coordinates, and
dimensions of each homogeneous component of the bigraded ideal}
Our goal is to study the equations of $\AV$ under the
embedding~\eqref{eqEmbeddingP3xP3new}, as well as formulas for the group
law.  The natural setting for all this is the bigraded polynomial
ring in eight variables
\begin{equation}
\label{eqnDefR}
R = \Kbar[\{P_i\}, \{Q_j\}] = \directsum_{d,e \geq 0} R_{d,e},
\end{equation}
where here the $P_i$ and $Q_j$ are the coordinates on $\Projective^3
\times \Projective^3$ (in particular, they are algebraically
independent), and $R_{d,e}$ is the summand consisting of bihomogeneous
polynomials $f(\{P_i\},\{Q_j\})$ of bidegree $(d,e)$ in the $P$s and $Q$s.
When we view the $P$s and $Q$s instead as sections of the line bundles
$(\LL')^2$ and $\LL^2$, and take products of
such sections, we can map a monomial such as
(to take a random example)
$P_{10}^3 Q_{00}Q_{01} \in R_{3,2}$ into
a space such as
$H^0(\AV, (\LL')^6 \tensor \LL^4)$. Let us write
\begin{equation}
\label{eqnDefMdeVde}
\MM_{d,e} = (\LL')^{2d}\tensor \LL^{2e},
\qquad
V_{d,e} = H^0(\AV, \MM_{d,e}).
\end{equation}
(Note for later use that $\MM_{d,e}$ is algebraically equivalent to
$\LL^{2(d+e)}$.)
The multiplication map that we just defined on monomials
extends to a $\Kbar$-linear map
\begin{equation}
\label{eqMultToVde}
\mu = \mu_{d,e}: R_{d,e} \to V_{d,e}
    \qquad f \mapsto \overline{f}.
\end{equation}
We will generally distinguish $f$ from its image $\mu(f) =
\overline{f}$, but we reserve the right to be occasionally sloppy,
especially when $\mu_{d,e}$ is injective.

The ideal $I$ defining the image of $\AV$
under~\eqref{eqEmbeddingP3xP3new} is a bigraded ideal of $R$, with
\begin{equation}
\label{eqIde}
I_{d,e} = \ker \mu_{d,e}.
\end{equation}
We will denote by $\Rbar$ the bigraded quotient ring $R/I$,
so $\Rbar_{d,e} \isomorphic \image \mu_{d,e} \subset V_{d,e}$.  We
view $\Rbar_{d,e}$ as a subspace of $V_{d,e}$, and point out
that for some $(d,e)$, it is a proper subset; see
Remark~\ref{remarkSmallDimension}.

We now compute, in stages, the dimensions of $I_{d,e}$ and of
$\Rbar_{d,e}$.  First note the following basic dimension counts.

\begin{proposition}
\label{propBasicDims}
For $d,e \geq 0$, we have
$\dim R_{d,e} = \binom{d+3}{3}\binom{e+3}{3}$ and
$\dim V_{d,e} = 4(d+e)^2$.
\end{proposition}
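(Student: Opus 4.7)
The first identity is pure combinatorics. Since $R = \Kbar[\{P_i\},\{Q_j\}]$ is the polynomial ring in eight algebraically independent variables partitioned into the four $P_i$ and the four $Q_j$, the bigraded summand $R_{d,e}$ decomposes as the tensor product of the space of degree-$d$ monomials in four variables with the space of degree-$e$ monomials in four variables. A standard stars-and-bars count gives $\binom{d+3}{3}$ monomials of degree $d$ in four variables, so $\dim R_{d,e} = \binom{d+3}{3}\binom{e+3}{3}$.

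For the second identity I would invoke Riemann--Roch on the abelian surface $\AV$ together with Kodaira-type vanishing. The first step is to observe that $\LL' = T_{D_1}^*\LL$ is a translate of $\LL$, hence algebraically equivalent to $\LL$; therefore $\MM_{d,e} = (\LL')^{2d}\tensor\LL^{2e}$ is algebraically equivalent to $\LL^{2(d+e)}$. Since the Euler characteristic depends only on the algebraic equivalence class, $\chi(\MM_{d,e}) = \chi(\LL^{2(d+e)})$.

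Next, I would compute $\chi(\LL^k)$. Riemann--Roch on an abelian surface gives $\chi(\LL) = (\LL\cdot\LL)/2$. Because $\LL$ gives the principal polarization we have $h^0(\LL) = 1$, and Mumford's index theorem/Kodaira vanishing for ample bundles on abelian varieties implies $H^i(\AV,\LL) = 0$ for $i>0$; thus $\chi(\LL) = 1$ and hence $(\LL\cdot\LL) = 2$. Scaling by $k$ yields $(\LL^k\cdot\LL^k) = 2k^2$, so $\chi(\LL^k) = k^2$. Applying this with $k = 2(d+e)$ gives $\chi(\MM_{d,e}) = 4(d+e)^2$.

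Finally, for $(d,e) \neq (0,0)$ the line bundle $\MM_{d,e}$ is ample (ampleness depends only on the algebraic equivalence class, and $\LL^{2(d+e)}$ is ample), so the same vanishing argument gives $\dim V_{d,e} = h^0(\MM_{d,e}) = \chi(\MM_{d,e}) = 4(d+e)^2$. There is no serious obstacle; the only point requiring a sliver of care is the passage from $\MM_{d,e}$ to $\LL^{2(d+e)}$ via algebraic equivalence, and implicitly the convention that the stated formula is meant for $(d,e)\neq(0,0)$ (since $V_{0,0} = H^0(\AV,\OO_\AV)$ is one-dimensional, not zero-dimensional).
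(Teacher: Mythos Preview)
Your proof is correct and follows essentially the same approach as the paper's own proof, which is simply a terse invocation of ``monomial counting'' for the first claim and ``Riemann--Roch on the abelian surface, since $\MM_{d,e}$ is algebraically equivalent to $\LL^{2(d+e)}$'' for the second. You have filled in the details of that Riemann--Roch computation (computing $(\LL\cdot\LL)=2$ from the principal polarization and invoking vanishing of higher cohomology for ample bundles), which the paper leaves implicit; your observation about the $(d,e)=(0,0)$ edge case is a fair point that the paper's statement glosses over.
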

\begin{proof}
The first statement holds because there are $\binom{d+3}{3}$
monomials of degree~$d$ in the four variables $\{P_i\}$, and
$\binom{e+3}{3}$ monomials in the $\{Q_j\}$.  The second
statement amounts to Riemann-Roch for the abelian surface $\AV$,
because $\MM_{d,e}$ is algebraically equivalent to the $2(d+e)$th
power of the principal polarization bundle $\LL$.   
\end{proof}

When one of $d$ or $e$ is zero, then the dimensions are easy to
compute, as a consequence of the known structure of the Kummer
embedding.

\begin{proposition}
\label{Ik0I0k}
For $k \leq 3$, we have $I_{k,0} = 0$; for $k \geq 4$, we have
$\dim I_{k,0} = \dim R_{k-4,0}$.  Hence
\begin{equation}
\label{eqDimIk0}
\dim \Rbar_{k,0} = \begin{cases}
  \dim R_{k,0} = \binom{k+3}{3}, &\text{if $k \leq 3$,}\\
  \dim R_{k,0} - \dim R_{k-4,0} = 2k^2 + 2, &\text{if $k \geq 4$.}\\
\end{cases}
\end{equation}
An analogous result holds for $I_{0,k}$ and $\Rbar_{0,k}$.  Note in
fact that the above formulas imply that
$\dim \Rbar_{k,0} = \dim \Rbar_{0,k} = 2k^2 + 2$ for all $k \geq 1$. 
\end{proposition}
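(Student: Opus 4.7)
The plan is to identify $I_{k,0}$ with the degree-$k$ homogeneous component of the defining ideal of the Kummer surface in $\Projective^3$, and then invoke the classical fact that this ideal is principal, generated by a single quartic.

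First, I would observe that elements of $R_{k,0}$ involve only the coordinates $P_i$, and $\mu_{k,0}(f)$ is obtained by substituting $P_i(p) = Q_i(p+D_1)$. The associated projective map $p \mapsto [P_i(p)]_i$ is $\kappa' = \kappa \compose T_{D_1}$, whose image equals $\kappa(\AV) = K$, because $T_{D_1}$ is a bijection on $\AV$. Consequently $I_{k,0}$ coincides with the degree-$k$ part of the homogeneous ideal $J(K) \subset \Kbar[P_{00},P_{10},P_{01},P_{11}]$ of the Kummer surface.

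Second, I would invoke the classical fact (recalled in Chapter~3 of~\cite{CasselsFlynnBook}, to which the paper already appeals for the Kummer quartic) that $K$ is an irreducible quartic hypersurface in $\Projective^3$; hence $J(K)$ is principal, generated by a single quartic polynomial $F \in R_{4,0}$. Irreducibility of $K$ prevents it from lying on any hypersurface of degree $\leq 3$, so $I_{k,0} = 0$ for $k \leq 3$. For $k \geq 4$, multiplication by $F$ gives an injection $R_{k-4,0} \hookrightarrow R_{k,0}$ (since $R$ is a polynomial ring, hence an integral domain) whose image is precisely $I_{k,0}$. Therefore $\dim I_{k,0} = \dim R_{k-4,0} = \binom{k-1}{3}$.

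Third, the dimensions of $\Rbar_{k,0}$ follow by direct subtraction from Proposition~\ref{propBasicDims}. A short binomial expansion yields
\[
\dim \Rbar_{k,0} = \binom{k+3}{3} - \binom{k-1}{3} = 2k^2 + 2 \qquad (k \geq 4),
\]
and one checks by hand that $\binom{k+3}{3} = 2k^2 + 2$ for $k=1,2,3$ as well, giving the unified formula for $k \geq 1$. The parallel statement for $I_{0,k}$ and $\Rbar_{0,k}$ is obtained by the identical argument with the roles of the $P_i$ and $Q_j$ swapped, since the $Q_j$ define the unshifted Kummer map $\kappa$ with the same image $K$. There is no serious obstacle beyond citing the classical principal-ideal description of the Kummer surface; the remainder of the proof is elementary linear algebra.
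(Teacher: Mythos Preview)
Your proof is correct and takes essentially the same approach as the paper: identify $I_{k,0}$ with the degree-$k$ part of the ideal of the Kummer surface, observe that this ideal is principal and generated by the quartic, and use that $R$ is a domain to get the dimension count. The paper's version is terser but the substance is identical.
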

\begin{proof}
Elements of $I_{k,0}$ are the same as degree~$k$ relations between the
$\{ P_i\}$ that do not involve any of the $\{Q_j\}$.  All relations in
the $\{P_i\}$ alone are generated by the quartic equation
$r = r(P_{00},P_{10},P_{01},P_{11})$ that defines the Kummer surface.
Hence, for $k \geq 4$, we have $I_{k,0} = r R_{k-4,0}$.  The ring
$R$ is a domain, so $I_{k,0}$ has the same dimension as $R_{k-4,0}$.
Finally, when $k \in \{1,2,3\}$, then one easily checks that
$\binom{k+3}{3} = 2k^2 + 2$.
\end{proof}

\begin{remark}
\label{remarkSmallDimension}
In particular, $\Rbar_{k,0} \neq V_{k,0}$ for $k \geq 2$; for example,
$\dim \Rbar_{2,0} = 10 < 16 = \dim V_{2,0}$. 
\end{remark}

Our next goal is to show that, in fact, $\Rbar_{d,e} = V_{d,e}$
whenever $d,e \geq 1$.  We begin with the case of $\Rbar_{1,1}$.

\begin{proposition}
\label{Rbar11}
In the setting of our construction, with $\AV$ the Jacobian of a curve
$\calC$, the space $\Rbar_{1,1}$ is equal to all of $V_{1,1}$; equivalently,
$I_{1,1} = 0$, and the set of sixteen products $P_i Q_j$ is linearly
independent and hence a basis of $\Rbar_{1,1} = V_{1,1}$.
\end{proposition}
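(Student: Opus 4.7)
The plan is to invoke the algebraic addition formula (Theorem~\ref{thmAlgebraicAdditionFormula} with $k=2$) to express every product $P_{b_1} Q_{b_2}$ as a linear combination of a known basis of $V_{1,1}$, and then check that the resulting transition matrix is nonsingular. Since $\dim V_{1,1} = 16$ by Proposition~\ref{propBasicDims} matches the number of products, spanning is equivalent to linear independence; I aim for spanning.

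First I would pick $y \in \AV$ with $2y = D_1$; since $D_1$ has exact order $4$, the point $y$ has exact order $8$. Setting $p = r+y$, $q = y$ in~\eqref{eqAlgebraicThetaAddition}, we get $p+q = r+D_1$ and $p-q = r$, so (after invoking $h_{r+y,y}$ and the identification $P_i(r) = Q_i(r+D_1)$) the left-hand side becomes $P_{b_1}(r) \tensor Q_{b_2}(r)$, viewed as a section of $\MM_{1,1}$. On the right-hand side, $\theta_{[4], d_1-d_2+c}(y) \in \LL^4_y$ becomes an $r$-independent scalar after trivializing $\LL^4_y$, while the first factor is $(T_y^*\theta_{[4], d_1+d_2+c})(r)$. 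Both $T_y^*\LL^4$ and $\MM_{1,1}$ are isomorphic to $\LL^4 \tensor \Phi(E_1)$ (using $T_x^*\LL \isomorphic \LL \tensor \Phi(x)$ and $4y = 2D_1 = E_1$), so the sections $\{T_y^*\theta_{[4], e}\}_{e \in B_4}$ form a basis of $V_{1,1}$.

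Next I would exploit the block-diagonal structure of the transition matrix. The map $[2]: B_4/B_2 \to B_2$ is an isomorphism, and $2(d_1+d_2) = b_1+b_2$, so the coset of $d_1+d_2$ in $B_4/B_2$ depends only on $s := b_1+b_2 \in B_2$; this gives four $4 \times 4$ blocks, one for each $s$. In the block for $s$, parametrize the four pairs by $b_1 \in B_2$ (with $b_2 = b_1+s$), fix a lift $t \in B_4$ with $2t = s$, and take $d_2 = d_1+t$, so $d_1 - d_2 = -t$ is constant while $d_1+d_2 = b_1+t$ ranges over $t+B_2$. After reindexing, the block has entries $\theta_{[4], -t + b_1 + c}(y)$ for $b_1, c \in B_2$---the convolution matrix on $B_2$ for the function $x \mapsto \theta_{[4], -t+x}(y)$.

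The main step, and the only place where specific information about $\AV$ enters, is showing each block is nonsingular. Characters of $B_2$ are parametrized by $A_4/A_2$ via $\chi_{[\alpha]}(b) = e_4(b,\alpha)$: the Weil pairing $e_4$ is trivial on $A_2 \times B_2$, so $\chi_\alpha$ factors through $A_4/A_2$, and the resulting map is a bijection by the cardinality count $|A_4/A_2| = 4 = |\widehat{B_2}|$. Since $B_2 \isomorphic (\Z/2\Z)^2$ has only self-inverse characters, Fourier inversion on $B_2$ gives
\begin{equation*}
\det(\text{block}) = \prod_{[\alpha] \in A_4/A_2} \sum_{x \in B_2} e_4(x,\alpha)\, \theta_{[4], -t + x}(y) = \prod_{[\alpha] \in A_4/A_2} F_{-t,\alpha}(y),
\end{equation*}
using~\eqref{eqDefOfF}. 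Corollary~\ref{corFdalphaNonvanishing}(2) guarantees $F_{-t,\alpha}(y) \neq 0$ for every $\alpha \in A_4$ and every $t \in B_4$, because $y$ has exact order $8$; so each block is nonsingular, the full transition matrix has rank $16$, and $\{P_i Q_j\}$ is a basis of $V_{1,1}$. The Jacobian hypothesis enters only via Corollary~\ref{corFdalphaNonvanishing}(2), which in turn uses Lemma~\ref{lemmaTheta1Nonvanishing}(2); on a product of elliptic curves the additional vanishing of $\theta_{[1],0}$ at an even $2$-torsion theta-characteristic would spoil this.
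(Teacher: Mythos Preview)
Your proof is correct and follows essentially the same approach as the paper: both choose a point of exact order~$8$ halving $D_1$, apply the addition formula to relate the products $P_i Q_j$ to the translated basis $\{T_y^*\theta_{[4],e}\}$ of $V_{1,1}$, and invoke Corollary~\ref{corFdalphaNonvanishing}(2) for the crucial nonvanishing. The only difference is packaging: the paper applies the already-diagonalized identity~\eqref{eqBasicAdditionFormula} to see directly that each $T_y^* F_{d,\alpha}$ lies in the image, whereas you start from~\eqref{eqAlgebraicThetaAddition} and explicitly Fourier-diagonalize the resulting convolution blocks, thereby rediscovering the $F_{d,\alpha}$ as the eigenvalues.
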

\begin{proof}
We are equivalently asserting the surjectivity of the multiplication
map
$H^0(\AV,(\LL')^2) \tensor H^0(\AV, \LL^2) \to H^0(\AV, \MM_{1,1})$.
This surjectivity (hence bijectivity, since both spaces are
$16$-dimensional) follows from
part~2 of Lemma~\ref{lemmaTheta1Nonvanishing} here, combined with
Theorem~3 in the appendix
of~\cite{KempfMultiplication}.  Note that the statement there has a
typographical mistake; 
see the statement of Theorem~2.1 of~\cite{PareschiSalvatiManni} and
the comments just preceding their Remark~2.4.  We reproduce the argument
from~\cite{KempfMultiplication} in our setting.  Take a point
$q \in \AV$ such that $2q = D_1$; hence $q$ has exact order~$8$, and
$\MM_{1,1} = (\LL')^2 \tensor \LL^2 = (T_{2q}^* \LL^2) \tensor \LL^2$
is isomorphic to $(T_q^* \LL^2) \tensor (T_q^* \LL^2)$, hence to
$T_q^* \LL^4$.  As mentioned just
after~\eqref{eqDefOfF}, the $\{F_{d,\alpha}\}$ form a basis of
$H^0(\AV,\LL^4)$ when $d$ and $\alpha$ range over coset
representatives for $B_4/B_2$ and $A_4/A_2$; thus the
$\{T_q^* F_{d,\alpha}\}$,
which are a basis for $H^0(\AV, T_q^* \LL^4)$,
can be identified with a basis for $H^0(\AV,
\MM_{1,1})$.

Now replace $p$
in~\eqref{eqBasicAdditionFormula} by $p' = p+q$.
This yields, for each choice of $d$ and 
$\alpha$, a linear combination of values
$P_i(p) \tensor Q_j(p)$, which is equal (up to the isomorphism
$h_{p+q,q}$) to $F_{d,\alpha}(p+q) \tensor F_{d,\alpha}(q)$.  We can
view $F_{d,\alpha}(p+q)$ as the value at $p$ of the section
$T_q^* F_{d,\alpha} \in H^0(\AV, T_q^* \LL^4)$.  
Moreover, at least up to isomorphism, we can
view $F_{d,\alpha}(q)$, which belongs to the fixed one-dimensional
space $\LL_q^4$, as a nonzero element of $\Kbar$, by the second
assertion of Corollary~\ref{corFdalphaNonvanishing}.
This means that the image of our multiplication map contains a nonzero
multiple of each basis element of $H^0(\AV,\MM_{1,1})$, and is hence
surjective.  
\end{proof}

We now show that all the ``larger'' $\Rbar_{d,e}$ are equal to
their corresponding $(d,e)$.  We introduce the notation
\begin{equation}
\label{eqComponentwiseOrder}
(d,e) \geq (d',e') \quad :\iff \quad d \geq d' \text{ and } e \geq e'.
\end{equation}

\begin{proposition}
\label{RbarIgeq1geq1}
Let $(d,e) \geq (1,1)$.  Then $\Rbar_{d,e} = V_{d,e}$; that is, the
multiplication map $\mu_{d,e}$ is surjective.
\end{proposition}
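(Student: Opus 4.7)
The plan is to induct on $d+e$, with base case $(d,e)=(1,1)$ furnished by Proposition~\ref{Rbar11}. For the inductive step I take $(d,e) \geq (1,1)$ with $d+e \geq 3$; by the evident $P \leftrightarrow Q$ symmetry (swapping the roles of $\LL'$ and $\LL$, and of $D_1$ with a suitably chosen auxiliary point) I may assume without loss of generality that $d \geq 2$, so that $(d-1,e) \geq (1,1)$ and the induction hypothesis applies at $(d-1,e)$.

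I would then reduce via the commutative square
\[
\begin{CD}
R_{1,0} \otimes R_{d-1,e} @>>> R_{d,e} \\
@V{\mathrm{id} \otimes \mu_{d-1,e}}VV @VV{\mu_{d,e}}V \\
H^0(\AV,(\LL')^2) \otimes V_{d-1,e} @>>> V_{d,e}
\end{CD}
\]
whose top arrow (polynomial multiplication) is surjective because the ring $R$ is generated in bidegrees $(1,0)$ and $(0,1)$, and whose left vertical is surjective by the induction hypothesis. Hence $\mu_{d,e}$ is surjective provided the bottom arrow, which is the multiplication of global sections on $\AV$
\[
H^0(\AV,(\LL')^2) \otimes H^0(\AV,\MM_{d-1,e}) \to H^0(\AV,\MM_{d,e}),
\]
is surjective.

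The main obstacle, then, is this surjectivity on the abelian surface. My plan would be to invoke a multiplication theorem for line bundles on abelian varieties of the same flavour as the one already used in the proof of Proposition~\ref{Rbar11} (Theorem~3 of~\cite{KempfMultiplication}, as amended by Theorem~2.1 of~\cite{PareschiSalvatiManni}): the bundle $(\LL')^2$ is base point free (it defines the Kummer embedding $\kappa'$), the bundle $\MM_{d-1,e}$ is ample and algebraically equivalent to $\LL^{2(d+e-1)}$ with $d+e-1 \geq 2$, and under these positivity conditions the multiplication of their sections is surjective. Alternatively, one could imitate the explicit argument of Proposition~\ref{Rbar11}: pick $q \in \AV$ of exact order~$8$ with $2q=D_1$, use the theorem of the square to rewrite $\MM_{d,e}$ as a translate of a suitable power of $\LL$, and then apply the algebraic addition formula \eqref{eqAlgebraicThetaAddition}, together with the nonvanishing of $F_{d,\alpha}(q)$ from Corollary~\ref{corFdalphaNonvanishing}, to exhibit a spanning set of the target as products of sections of $(\LL')^2$ with sections of $\MM_{d-1,e}$. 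Once this multiplication surjectivity is in hand, the induction closes immediately.
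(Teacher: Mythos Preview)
Your approach is essentially the same as the paper's: induct from the base case $(1,1)$ (Proposition~\ref{Rbar11}), and for the step reduce to surjectivity of the multiplication map $H^0(\AV,(\LL')^2)\tensor H^0(\AV,\MM_{d-1,e})\to H^0(\AV,\MM_{d,e})$ (or its $\LL^2$ analogue). The paper carries this out exactly, invoking Kempf's theorem on multiplication of sections in the form reproduced as Theorem~10.1 of~\cite{MumfordTataIII} (also in~\cite{KempfBook}, \cite{BirkenhakeLange}): if $\LL_1,\LL_2$ are ample with $\LL_i$ algebraically equivalent to $\LL^{\ell_i}$ and $(\ell_1,\ell_2)\geq(2,3)$ or $(3,2)$, then $H^0(\LL_1)\tensor H^0(\LL_2)\to H^0(\LL_1\tensor\LL_2)$ is surjective. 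In your setup $(\ell_1,\ell_2)=(2,2(d+e-1))$ with $d+e-1\geq 2$, so the hypothesis is met.

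One small correction: the result you need here is \emph{not} Theorem~3 of~\cite{KempfMultiplication}; that theorem has a nonvanishing hypothesis tailored to the borderline case $(\ell_1,\ell_2)=(2,2)$ and was what made Proposition~\ref{Rbar11} delicate. For the inductive step you are safely in the range where Kempf's cleaner $(2,3)$/$(3,2)$ theorem applies, and no extra nonvanishing needs to be checked. Your ``alternative'' explicit imitation of the Proposition~\ref{Rbar11} argument would not go through without further work, since the identification of $\MM_{1,1}$ with a translate of $\LL^4$ has no direct analogue for general $\MM_{d-1,e}$.
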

\begin{proof}
This is an application of a result of Kempf that first appeared
in~\cite{KempfProjectiveCoordRings}, but this reference is less
widely
available than others.  The proof of Kempf is reproduced
in Theorem~10.1 of~\cite{MumfordTataIII}, in language that works over
an arbitrary~$\Kbar$.  (Over $\C$, one can also see  
Theorem~6.8(c) of~\cite{KempfBook}, which is easily adaptable to
general $\Kbar$, and Proposition~7.3.4 
of~\cite{BirkenhakeLange}).  The result of Kempf is as follows:
let $\LL_1, \LL_2$ be ample
line bundles on $\AV$ with $\LL_i$ algebraically equivalent to
$\LL^{\ell_i}$.
Assume that 
$(\ell_1, \ell_2) \geq (2,3)$ or that $(\ell_1, \ell_2)
\geq (3,2)$, in the sense of~\eqref{eqComponentwiseOrder}.
The theorem then states that the map $H^0(\AV,\LL_1) \tensor H^0(\AV,\LL_2)
\to H^0(\AV, \LL_1 \tensor \LL_2)$ is surjective.  Applying this to
our $\MM_{d,e}$ and either $\MM_{1,0}$ or $\MM_{0,1}$, this means
that if $d+e \geq 2$ then the ``product'' $V_{d,e} \cdot V_{1,0}$ is
equal to $V_{d+1,e}$, and similarly $V_{d,e} \cdot V_{0,1} =
V_{d,e+1}$.  By a notation such as $V_{d,e} \cdot V_{1,0}$, we mean
the set of all finite sums $\sum_i a_i \cdot b_i$ with each
$a_i \in V_{d,e}$ and $b_i \in V_{1,0}$.  In other words,
$V_{d,e} \cdot V_{1,0}$ is the image of
the multiplication map $V_{d,e} \tensor V_{1,0} \to V_{d+1,e}$.

We now proceed by induction, from the base case $(d,e) =
(1,1)$, to obtain our result.  (Surjectivity of the multiplication
maps in $R$ or $\Rbar$ is immediate: for example, $R_{d,e}\cdot R_{1,0} =
R_{d+1,e}$.)
\end{proof}

\begin{corollary}
\label{dimensionsIde}
For $(d,e) \geq (1,1)$, we have
$\dim I_{d,e} = \dim R_{d,e} - \dim V_{d,e} =
\binom{d+3}{3}\binom{e+3}{3} - 4(d+e)^2$.
As a special case, and after some simple algebra,
$\dim I_{k,1} = \dim I_{1,k} = 4\binom{k+1}{3}
   = 4 \dim R_{k-2,0} = 4 \dim R_{0, k-2}$
when $k \geq 1$.
\end{corollary}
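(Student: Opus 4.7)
The proof is a direct application of rank--nullity. By the definition~\eqref{eqIde}, $I_{d,e} = \ker \mu_{d,e}$, and Proposition~\ref{RbarIgeq1geq1} asserts that $\mu_{d,e}: R_{d,e} \to V_{d,e}$ is surjective whenever $(d,e) \geq (1,1)$. Hence
\begin{equation*}
\dim I_{d,e} = \dim R_{d,e} - \dim V_{d,e},
\end{equation*}
and the stated formula follows by substituting the values $\dim R_{d,e} = \binom{d+3}{3}\binom{e+3}{3}$ and $\dim V_{d,e} = 4(d+e)^2$ from Proposition~\ref{propBasicDims}.

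For the special case $(d,e) = (k,1)$, I would set $e=1$ so that $\binom{e+3}{3} = 4$ and simplify. The algebra is routine:
\begin{equation*}
4\binom{k+3}{3} - 4(k+1)^2 = \frac{4(k+1)\bigl[(k+3)(k+2) - 6(k+1)\bigr]}{6} = \frac{4(k+1)k(k-1)}{6} = 4\binom{k+1}{3}.
\end{equation*}
The identity $4\binom{k+1}{3} = 4\dim R_{k-2,0}$ is then immediate from Proposition~\ref{propBasicDims} (taking $\binom{n}{3} = 0$ when $n < 3$ to cover $k = 1, 2$), and the case $(d,e) = (1,k)$ is symmetric in the roles of $P$ and $Q$.

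There is no real obstacle to overcome here: all the substance is packaged into the surjectivity statement of Proposition~\ref{RbarIgeq1geq1}, which itself rests on Kempf's theorem on projective coordinate rings combined with the base case $(1,1)$ from Proposition~\ref{Rbar11}. The corollary is essentially bookkeeping. It is worth noting as a sanity check that for $k = 1$ the formula correctly yields $\dim I_{1,1} = 4\binom{2}{3} = 0$, consistent with Proposition~\ref{Rbar11}, and for $k = 2$ it yields $\dim I_{2,1} = 4$, recording that there are exactly four independent bidegree $(2,1)$ relations beyond the trivial ones.
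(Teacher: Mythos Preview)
Your proof is correct and matches the paper's approach exactly: the paper states this corollary without proof, treating it as an immediate consequence of the surjectivity in Proposition~\ref{RbarIgeq1geq1} combined with the dimension counts in Proposition~\ref{propBasicDims}, which is precisely what you have written out. Your explicit algebra for the special case and the sanity checks at $k=1,2$ are helpful additions but not required.
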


\subsection*{The result of Kempf on relations; consequences for
ideal generators}
\begin{definition}
\label{defOfR}
Let $\LL_1$ and $\LL_2$ be ample line bundles on $\AV$.  The space of
relations $\RR(\LL_1,\LL_2)$ is defined by
\begin{equation}
  \label{eqnForR}
  \RR(\LL_1,\LL_2) = \ker( H^0(\AV,\LL_1) \tensor H^0(\AV,\LL_2)
      \to H^0(\AV, \LL_1 \tensor \LL_2)).
\end{equation}
\end{definition}
The space $\RR(\MM_{d',e'},\MM_{d'',e''})$ provides information
about the bigraded component $I_{d'+d'', e'+e''}$ of our ideal of relations.

The following theorem is again due to
Kempf~\cite{KempfProjectiveCoordRings}.
See Theorem~6.14 and Proposition~6.15
of~\cite{KempfBook}; or, alternatively, Theorem~10.10
of~\cite{MumfordTataIII}; or also Proposition~7.4.3
of~\cite{BirkenhakeLange} and the subsequent method of proof of their
Theorem~7.4.1.  All these references follow the original argument of 
Kempf.  

\begin{theorem}
\label{thmKempfRelations}
Let $\LL_1$, $\LL_2$, and $\LL_3$ be ample line bundles on $\AV$, with
$\LL_i$ algebraically equivalent to $\LL^{\ell_i}$.  Assume that
$\ell_3 \geq 2$ and that either $(\ell_1,\ell_2) \geq (2,5)$ or
$(\ell_1,\ell_2) \geq (3,4)$.  Then the following map of vector
spaces is surjective:
\begin{equation}
  \label{eqnKempfRelations}
\RR(\LL_1,\LL_2) \tensor H^0(\AV,\LL_3) \to \RR(\LL_1, \LL_2 \tensor \LL_3).
\end{equation}
\end{theorem}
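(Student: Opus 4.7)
The plan is to follow Kempf's original strategy, reducing surjectivity on spaces of relations to a surjectivity statement for multiplication of global sections of a kernel bundle. Since $\ell_1 \geq 2$ makes $\LL_1$ globally generated on $\AV$, the evaluation morphism produces a short exact sequence
\begin{equation*}
0 \to M_{\LL_1} \to H^0(\AV,\LL_1) \tensor \OO_\AV \to \LL_1 \to 0,
\end{equation*}
defining the kernel bundle $M_{\LL_1}$. Tensoring by an ample line bundle $\LL'$ algebraically equivalent to $\LL^{\ell'}$ and taking cohomology, together with the standard vanishing $H^1(\AV,\LL') = 0$, yields the exact sequence
\begin{equation*}
0 \to H^0(M_{\LL_1}\tensor \LL') \to H^0(\LL_1)\tensor H^0(\LL') \to H^0(\LL_1\tensor \LL') \to H^1(M_{\LL_1}\tensor \LL') \to 0.
\end{equation*}
Under the hypotheses of the theorem, applied to $\LL' = \LL_2$ and to $\LL' = \LL_2\tensor \LL_3$, the middle multiplication map is surjective by Kempf's earlier theorem (invoked in Proposition~\ref{RbarIgeq1geq1}); hence $H^1(M_{\LL_1}\tensor \LL')=0$ and $\RR(\LL_1,\LL') \isomorphic H^0(M_{\LL_1}\tensor \LL')$ in each case. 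Consequently the map of \eqref{eqnKempfRelations} is identified with the multiplication map
\begin{equation*}
H^0(\AV, M_{\LL_1}\tensor \LL_2) \tensor H^0(\AV,\LL_3) \to H^0(\AV, M_{\LL_1}\tensor \LL_2\tensor \LL_3).
\end{equation*}

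To prove this multiplication map is surjective, I would verify a Castelnuovo--Mumford style regularity condition for $M_{\LL_1}\tensor \LL_2$ relative to the ample bundle $\LL_3$. On the abelian surface $\AV$, it suffices to establish the vanishings
\begin{equation*}
H^1(\AV, M_{\LL_1}\tensor \LL_2 \tensor P) = 0
\quad \text{and} \quad
H^2(\AV, M_{\LL_1}\tensor \LL_2 \tensor \LL_3^{-1} \tensor P) = 0
\end{equation*}
for all $P \in \text{Pic}^0(\AV)$. The first was effectively produced in the preceding paragraph (replacing $\LL_2$ by $\LL_2 \tensor P$, which is algebraically equivalent to $\LL^{\ell_2}$ and so still satisfies Kempf's multiplication criterion). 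For the second, I would invoke Serre duality on $\AV$, whose canonical bundle is trivial, to convert the assertion into the vanishing of $H^0(\AV, M_{\LL_1}^\vee \tensor \LL_2^{-1}\tensor \LL_3 \tensor P^{-1})$; dualizing the defining sequence of $M_{\LL_1}$ then reduces the question to vanishing of global sections of sufficiently negative twists of $\OO_\AV$ and $\LL_1^{-1}$, which hold by the numerical positivity built into the thresholds on $\ell_1, \ell_2, \ell_3$.

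With both vanishings in place, a standard Castelnuovo--Mumford argument -- for example, choosing a general section of $\LL_3$ whose zero locus is a smooth curve $C \subset \AV$, tensoring the Koszul sequence $0 \to \OO_\AV \to \LL_3 \to \LL_3|_C \to 0$ with $M_{\LL_1}\tensor \LL_2$, and treating the restriction to $C$ inductively -- delivers the desired surjectivity and hence the theorem. The main obstacle, in my view, is the $i=2$ vanishing: it is precisely at this step that the asymmetric thresholds $(\ell_1,\ell_2) \geq (2,5)$ or $(3,4)$ together with $\ell_3 \geq 2$ are needed, since they guarantee the twists appearing after dualizing the kernel bundle remain in a sufficiently positive regime for the requisite cohomology to vanish.
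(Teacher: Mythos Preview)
The paper does not actually prove Theorem~\ref{thmKempfRelations}: it is quoted from the literature, with citations to Kempf's original paper and the expositions in~\cite{KempfBook}, \cite{MumfordTataIII}, and~\cite{BirkenhakeLange}. So there is no ``paper's own proof'' to compare against beyond saying that the cited sources all follow Kempf's argument.

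Your outline is indeed Kempf's strategy: identify $\RR(\LL_1,\LL')$ with $H^0(M_{\LL_1}\tensor\LL')$ via the evaluation sequence, then reduce~\eqref{eqnKempfRelations} to surjectivity of a multiplication map for the kernel bundle. That reduction is correct and is the heart of the argument. The $H^1$ vanishing you invoke is exactly the surjectivity of multiplication from Proposition~\ref{RbarIgeq1geq1}, applied to $\LL_2\tensor P$, and that step is fine.

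The gap is in your treatment of the $H^2$ step. You claim that Serre duality turns $H^2(M_{\LL_1}\tensor\LL_2\tensor\LL_3^{-1}\tensor P)=0$ into a vanishing of $H^0$ of ``sufficiently negative twists,'' but the hypotheses impose no upper bound on $\ell_3$: we only know $\ell_3\geq 2$. When $\ell_3>\ell_2$, the bundle $\LL_2^{-1}\tensor\LL_3\tensor P^{-1}$ is ample, not negative, so after dualizing the defining sequence of $M_{\LL_1}$ you face nonzero $H^0$ terms that do not obviously cancel. Your Koszul/restriction-to-a-curve sketch likewise needs vanishing inputs that are not available in that regime. The actual proofs in the cited references avoid this difficulty: rather than a Castelnuovo--Mumford regularity condition involving $\LL_3^{-1}$, Kempf proves directly that $M_{\LL_1}\tensor\LL_2$ satisfies the hypotheses of his multiplication theorem for \emph{vector} bundles on abelian varieties (this is where the thresholds $(2,5)$ and $(3,4)$ enter, via an analysis of $M_{\LL_1}$ pulled back along isogenies), and that theorem then gives surjectivity of $H^0(M_{\LL_1}\tensor\LL_2)\tensor H^0(\LL_3)\to H^0(M_{\LL_1}\tensor\LL_2\tensor\LL_3)$ for \emph{every} $\LL_3$ with $\ell_3\geq 2$, uniformly in $\ell_3$. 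So your identification of the kernel-bundle reduction is on target, but the endgame needs Kempf's vector-bundle multiplication criterion rather than the regularity argument you sketched.
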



Let us describe the map in~\eqref{eqnKempfRelations} explicitly.
Consider a tensor $\sum_{i} s_i \tensor t_i \in
\RR(\LL_1, \LL_2)$.  This means that for each $i$, we have
$s_i \in  H^0(\AV,\LL_1)$ and $t_i \in H^0(\AV, \LL_2)$, and that furthermore
$\sum_{i} s_i \cdot t_i = 0$ in $H^0(\AV,\LL_1 \tensor \LL_2)$.  Let
$u \in H^0(\AV, \LL_3)$.  Then
$(\sum_{i} s_i \tensor t_i) \tensor u$ is mapped to the element
$\sum_{i} s_i \tensor (t_i \cdot u) 
   \in \RR(\LL_1, \LL_2 \tensor \LL_3)$,
where each $t_i \cdot u$ belongs to $H^0(\AV,\LL_2 \tensor \LL_3)$.

We will apply Theorem~\ref{thmKempfRelations} twice.
Proposition~\ref{propositionI31} below is not strictly speaking
necessary: we will later use a different method to show in
Proposition~\ref{propositionI21} that the result 
holds with the weaker assumption that $k\geq 2$.  However, the
notation here is simpler than in the proof of
Proposition~\ref{propositionI22} below, so this first proof is easier
to digest, and can serve as a guide to the proof of
Proposition~\ref{propositionI22}.  The notation 
$R_{1,0} \cdot I_{k,1}$ was defined in the proof of
Proposition~\ref{RbarIgeq1geq1}. 

\begin{proposition}
\label{propositionI31}
Let $k \geq 3$.  Then $R_{1,0} \cdot I_{k,1} = I_{k+1,1}$ and
$R_{0,1} \cdot I_{1,k} = I_{1,k+1}$.
\end{proposition}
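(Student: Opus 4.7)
The plan is to apply Kempf's relation theorem (Theorem~\ref{thmKempfRelations}) with line bundles $\LL_1 = \MM_{1,0}$, $\LL_2 = \MM_{k-1,1}$, $\LL_3 = \MM_{1,0}$.  Since $\MM_{d,e}$ is algebraically equivalent to $\LL^{2(d+e)}$, these correspond to $\ell_1 = 2$, $\ell_2 = 2k$, $\ell_3 = 2$, and the hypotheses $\ell_3 \geq 2$ and $(\ell_1,\ell_2) \geq (2,5)$ hold exactly when $k \geq 3$, matching the bound in the statement.  Since $\LL_2 \tensor \LL_3 = \MM_{k,1}$, the conclusion is that $\RR(\MM_{1,0}, \MM_{k-1,1}) \tensor V_{1,0} \to \RR(\MM_{1,0}, \MM_{k,1})$ is surjective.

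The heart of the argument is to translate this surjectivity at the level of global sections into the desired statement about ideals in $R$.  The inclusion $R_{1,0} \cdot I_{k,1} \subset I_{k+1,1}$ is immediate.  For the reverse, start with $f \in I_{k+1,1}$ and write $f = \sum_i P_i g_i$ with $g_i \in R_{k,1}$, using $R_{k+1,1} = R_{1,0} \cdot R_{k,1}$.  The tensor $\eta := \sum_i P_i \tensor \mu_{k,1}(g_i)$ then lies in $\RR(\MM_{1,0}, \MM_{k,1})$ because $\mu_{k+1,1}(f) = 0$.  By Kempf, $\eta$ decomposes as $\sum_\alpha \sum_j x_{j,\alpha} \tensor (y'_{j,\alpha} \cdot u_\alpha)$ with $\sum_j x_{j,\alpha} \tensor y'_{j,\alpha} \in \RR(\MM_{1,0}, \MM_{k-1,1})$ and $u_\alpha \in V_{1,0} = R_{1,0}$.

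Next I would lift back to the polynomial ring.  Using Proposition~\ref{RbarIgeq1geq1} to choose preimages $Y'_{j,\alpha} \in R_{k-1,1}$ of $y'_{j,\alpha}$, define $\tau := \sum_i P_i \tensor g_i$ and $\tau' := \sum_\alpha \sum_j x_{j,\alpha} \tensor (Y'_{j,\alpha} u_\alpha)$ in $R_{1,0} \tensor R_{k,1}$.  By construction both have image $\eta$ under $\mathrm{id} \tensor \mu_{k,1}$, so $\tau - \tau'$ lies in its kernel $R_{1,0} \tensor I_{k,1}$.  Under the multiplication map $m: R_{1,0} \tensor R_{k,1} \to R_{k+1,1}$, one has $m(\tau) = f$ and $m(\tau') = \sum_\alpha u_\alpha \cdot (\sum_j x_{j,\alpha} Y'_{j,\alpha})$, where each inner sum has $\mu_{k,1}$-image $\sum_j x_{j,\alpha} y'_{j,\alpha} = 0$ and so belongs to $I_{k,1}$.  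Hence $m(\tau')$ and $m(\tau - \tau')$ both lie in $R_{1,0} \cdot I_{k,1}$, and so does $f = m(\tau)$.  The statement $R_{0,1} \cdot I_{1,k} = I_{1,k+1}$ follows by the symmetric argument, swapping $P$'s and $Q$'s and using $(\LL_1, \LL_2, \LL_3) = (\MM_{0,1}, \MM_{1,k-1}, \MM_{0,1})$.

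The main subtlety is that Kempf produces its decomposition only at the level of $V$ and not in $R$; the obstacle is bridging these two levels, and the key trick is that any lift back to $R$ suffices, because the difference between two lifts lies in $R_{1,0} \tensor I_{k,1}$, whose image under multiplication is already inside the target $R_{1,0} \cdot I_{k,1}$.
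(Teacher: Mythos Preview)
Your proof is correct and follows essentially the same approach as the paper's: both apply Theorem~\ref{thmKempfRelations} with $(\LL_1,\LL_2,\LL_3) = (\MM_{1,0},\MM_{k-1,1},\MM_{1,0})$, decompose $f = \sum_i P_i g_i$, push to $\RR(\MM_{1,0},\MM_{k,1})$, pull back via Kempf, and lift to $R$ using the surjectivity of $\mu_{k-1,1}$ from Proposition~\ref{RbarIgeq1geq1}. The only difference is cosmetic: the paper works in the explicit basis $\{P_i\}$ of $R_{1,0}=V_{1,0}$ and compares coefficients to get $g_i \equiv \sum_j A_{ij} P_j \pmod{I_{k,1}}$, whereas you phrase the same step as $\tau - \tau' \in \ker(\mathrm{id}\tensor\mu_{k,1}) = R_{1,0}\tensor I_{k,1}$, which is the basis-free version of the same observation.
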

\begin{proof}
The two statements are symmetric, so we prove only the first
assertion.  The key step will be to apply
Theorem~\ref{thmKempfRelations} with $\LL_1 = \MM_{1,0}$, $\LL_2 =
\MM_{k-1,1}$, and $\LL_3 = \MM_{1,0}$.  Hence $\ell_1 = 2$, $\ell_2 =
2k \geq 6$, and $\ell_3 = 2$, and we will invoke the surjectivity
of~\eqref{eqnKempfRelations} at an opportune moment.

Consider an element $f \in I_{k+1,1}$; our goal is to express $f$
in terms of elements of $I_{k,1}$.  Writing $f$ in terms of the
coordinates $\{P_i\}$ and $\{Q_j\}$ (recall these are respectively
bases of $R_{1,0}$ and $R_{0,1}$), we can (nonuniquely) write $f$
in the form $f = \sum_i P_i g_i$ with $g_i \in R_{k,1}$.  Let
$\overline{g}_i \in \Rbar_{k,1}$ be the image of $g_i$ when we map it
to $\Rbar_{k,1} = H^0(\AV,\MM_{k,1})$.  Since $f \in I_{k+1,1}$, we
deduce that $\sum_i P_i \tensor \overline{g}_i 
  \in \RR(\MM_{1,0},\MM_{k,1})$.

At this point, invoke the surjectivity of~\eqref{eqnKempfRelations} to
obtain a preimage of $\sum_i P_i \tensor \overline{g}_i$.  This
preimage has the form
$\sum_j r_j \tensor P_j \in \RR(\MM_{1,0},\MM_{k-1,1}) \tensor R_{1,0}$.
Each $r_j \in \RR(\MM_{1,0},\MM_{k-1,1})$ can be written as
$r_j = \sum_i P_i \tensor \overline{A}_{ij}$ with $\overline{A}_{ij} \in
\Rbar_{k-1,1}$; let $A_{ij} \in R_{k-1,1}$ be a representative of
$\overline{A}_{ij}$.  The fact that $r_j \in
\RR(\MM_{1,0},\MM_{k-1,1})$ means that it maps to zero in
$\Rbar_{k,1}$, so for each $j$ we have 
\begin{equation}
\label{eqn1InPfOfPropI31}
\sum_i P_i A_{ij} \in
I_{k,1}.
\end{equation}

We thus have a preimage 
$\sum_j (\sum_i P_i \tensor \overline{A}_{ij}) \tensor P_j$ that maps
to
$\sum_{i,j} P_i \tensor \overline{A_{ij} P_j} = \sum_i P_i \tensor
\overline{g}_i$.
This last equality takes place inside $\RR(\MM_{1,0},\MM_{k,1})$,
which is a subspace of $R_{1,0}\tensor\Rbar_{k,1}$.
Now the $\{P_i\}$ are a basis for $R_{1,0}$, so for each
$i$, $\sum_j \overline{A_{ij} P_j} = \overline{g}_i$
inside $\Rbar_{k,1}$; in other words,
$g_i \equiv \sum_j A_{ij} P_j \pmod{I_{k,1}}$ for all $i$.  Hence we obtain
\begin{equation}
\label{eqn2InPfOfPropI31}
f = \sum_i P_i g_i 
  \equiv \sum_{i,j} P_i A_{ij} P_j \pmod{R_{1,0} \cdot I_{k,1}}
  \equiv 0 \pmod {I_{k,1} \cdot R_{1,0}},
\end{equation}
where the last congruence holds by~\eqref{eqn1InPfOfPropI31}.  Hence
$f \in R_{1,0} \cdot I_{k,1}$, as desired.
\end{proof}

\begin{proposition}
\label{propositionI22}
Let $d',e' \geq 2$.  Then $R_{1,0} \cdot I_{d',e'} = I_{d'+1,e'}$, and we
similarly have $R_{0,1} \cdot I_{d',e'} = I_{d',e'+1}$.
\end{proposition}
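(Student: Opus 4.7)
The plan is to mirror essentially verbatim the proof of Proposition~\ref{propositionI31}, with the pair $(k,1)$ replaced throughout by $(d',e')$. I will describe only the changes; the sole point that needs checking is that the hypotheses of Kempf's Theorem~\ref{thmKempfRelations} still apply under $d',e' \geq 2$. As before, the two claimed equalities are symmetric (swap the roles of $\LL$ and $\LL'$, i.e. of the $P$-- and $Q$--coordinates), so I treat only $R_{1,0} \cdot I_{d',e'} = I_{d'+1,e'}$.

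Take $f \in I_{d'+1,e'}$ and, using that $\{P_i\}$ is a basis of $R_{1,0}$, write $f = \sum_i P_i g_i$ with $g_i \in R_{d',e'}$. Let $\overline{g}_i \in \Rbar_{d',e'}$ be the image of $g_i$ under $\mu_{d',e'}$; by Proposition~\ref{RbarIgeq1geq1}, $\Rbar_{d',e'} = V_{d',e'} = H^0(\AV,\MM_{d',e'})$. Since $f$ maps to zero in $V_{d'+1,e'}$, we obtain $\sum_i P_i \tensor \overline{g}_i \in \RR(\MM_{1,0}, \MM_{d',e'})$.

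Next, apply Theorem~\ref{thmKempfRelations} with $\LL_1 = \MM_{1,0}$, $\LL_2 = \MM_{d'-1,e'}$, and $\LL_3 = \MM_{1,0}$, so that $\LL_2 \tensor \LL_3 = \MM_{d',e'}$. These bundles are algebraically equivalent to powers of $\LL$ with $\ell_1 = \ell_3 = 2$ and $\ell_2 = 2(d'-1+e')$. The hypothesis $\ell_3 \geq 2$ holds, and since $d',e' \geq 2$ we have $d'+e' \geq 4$, hence $\ell_2 \geq 6 \geq 5$, so $(\ell_1,\ell_2) \geq (2,5)$. Thus the map $\RR(\MM_{1,0},\MM_{d'-1,e'}) \tensor H^0(\AV,\MM_{1,0}) \to \RR(\MM_{1,0}, \MM_{d',e'})$ is surjective. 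Choose a preimage $\sum_j r_j \tensor P_j$ of $\sum_i P_i \tensor \overline{g}_i$, with each $r_j = \sum_i P_i \tensor \overline{A}_{ij}$ where $\overline{A}_{ij} \in \Rbar_{d'-1,e'}$; lift to $A_{ij} \in R_{d'-1,e'}$. The fact that $r_j \in \RR(\MM_{1,0},\MM_{d'-1,e'})$ means $\sum_i P_i A_{ij} \in I_{d',e'}$ for each $j$.

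Finally, equating coefficients of the algebraically independent $P_i$ in the identity $\sum_i P_i \tensor \overline{g}_i = \sum_{i,j} P_i \tensor \overline{A_{ij} P_j}$ yields $g_i \equiv \sum_j A_{ij} P_j \pmod{I_{d',e'}}$, so
\[
f = \sum_i P_i g_i \;\equiv\; \sum_j P_j \Bigl( \sum_i P_i A_{ij} \Bigr) \pmod{R_{1,0} \cdot I_{d',e'}},
\]
and the right-hand side lies in $R_{1,0} \cdot I_{d',e'}$ by the previous step. Hence $f \in R_{1,0} \cdot I_{d',e'}$, as required. The main (in fact only) obstacle is to verify the numerical hypothesis in Kempf's theorem; the bound $d',e' \geq 2$ is exactly what is needed to force $\ell_2 \geq 5$ with $\ell_1 = 2$. (When $d'=1$ and $e' \geq 2$, the same scheme would require $\ell_2 = 2e' \geq 5$, hence $e' \geq 3$, which is why this proposition does not subsume Proposition~\ref{propositionI31} or the forthcoming Proposition~\ref{propositionI21}.)
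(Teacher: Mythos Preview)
Your proof is correct. It differs from the paper's in one choice: you take $\LL_1 = \MM_{1,0}$ and decompose $f$ against the four-element basis $\{P_i\}$ of $R_{1,0}$, exactly as in Proposition~\ref{propositionI31}, whereas the paper takes $\LL_1 = \MM_{1,1}$ and decomposes against the sixteen-element basis $\{P_iQ_j\}$ of $R_{1,1} = \Rbar_{1,1}$ (invoking Proposition~\ref{Rbar11}). Both choices satisfy the numerical hypotheses of Theorem~\ref{thmKempfRelations} under $d',e'\geq 2$, and both require Proposition~\ref{RbarIgeq1geq1} to identify the relevant $\Rbar$ with $V$ so that the $\overline{A}_{ij}$ coming from $\RR(\LL_1,\LL_2)$ can be lifted to $R$; in your case this is $\Rbar_{d'-1,e'} = V_{d'-1,e'}$, which holds since $(d'-1,e')\geq(1,1)$. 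Your route is marginally cleaner (smaller basis, no appeal to Proposition~\ref{Rbar11}); the paper's route makes the role of $\MM_{1,1}$ and the Segre picture of Remark~\ref{remarkI22andUsualQuadrics} more visible. Either way the argument goes through.
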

\begin{proof}
As before, we prove only the first assertion.  Write $d' = d+1$ and
$e' = e+1$ with $d,e \geq 1$.  We will apply
Theorem~\ref{thmKempfRelations} with $\LL_1 = \MM_{1,1}$, $\LL_2 =
\MM_{d,e}$, and $\LL_3 = \MM_{1,0}$.   This time we use the basis
$\{P_i Q_j\}_{i,j}$ for the sixteen-dimensional space
$R_{1,1} = \Rbar_{1,1} = H^0(\AV,\MM_{1,1})$.  The fact that $d,e \geq
1$ also ensures that $\Rbar_{d,e} = H^0(\AV,\MM_{d,e})$.

Consider an element $f \in I_{d'+1,e'} = I_{d+2,e+1}$.  Similarly to
the proof of Proposition~\ref{propositionI31}, write $f = \sum_{i,j}
P_i Q_j g_{ij}$, with $g_{ij} \in R_{d+1,e}$.  Once again, 
$\sum_{i,j} P_i Q_j \tensor \overline{g}_{ij} 
\in \RR(\MM_{1,1},\MM_{d+1,e})$, and this comes from a tensor
$\sum_{i,j,k} (P_i Q_j \tensor \overline{A}_{ijk}) \tensor P_k$ with
each $\sum_{i,j} P_i Q_j \tensor \overline{A}_{ijk} \in
\RR(\MM_{1,1},\MM_{d,e})$, for all $k$.  By the same reasoning as
before (that is, the fact that we have a basis for $\Rbar_{1,1}$), we
deduce that for all $i,j$, $g_{ij} \equiv \sum_k A_{ijk} P_k
\pmod{I_{d+1,e}}$.  Hence $f \equiv \sum_{i,j,k} P_i Q_j A_{ijk} P_k
\pmod{R_{1,1} \cdot I_{d+1,e}}$; note here that $R_{1,1} \cdot
I_{d+1,e} \subset R_{1,0} \cdot I_{d+1,e+1}$.

Finally,  
$\sum_{i,j,k} P_i Q_j A_{ijk} P_k 
= \sum_k (\sum_{i,j} P_i Q_j A_{ijk}) P_k
\equiv 0 \pmod{I_{d+1,e+1} \cdot R_{1,0}}$
and so we have shown the desired result, that
 $f \in R_{1,0} \cdot I_{d+1,e+1} = R_{1,0} \cdot I_{d',e'}$.
\end{proof}

\subsection*{Constructing specific elements of $I_{1,2}$ and $I_{2,1}$}

Our next goal is to study more carefully certain relations
between the $\{P_i\}$ and the $\{Q_j\}$, and to deduce from
these a specific basis for each of $I_{1,2}$ and $I_{2,1}$.  These
bases will allow us to prove Proposition~\ref{propositionI21} below,
which strengthens Proposition~\ref{propositionI31} and allows us to
reach the principal result of this section,
Theorem~\ref{generatorsInEachBidegree}, giving our structural results
on the bidegrees that are enough to generate the ideal $I$.

In the setting of algebraic theta functions,
we give a construction that is closely related to the result in
\eqref{genus2biquadD1}, \eqref{genus2biquadD1var},
and~\eqref{someeqns}.
We actually discovered those relations in Section~\ref{section4} earlier
in our investigations, based on heuristics on where to expect them,
but we need to first carry out a similar computation here, so as to
make Theorem~\ref{generatorsInEachBidegree} available to us at the
correct moment when we use it later.

We first observe that~\eqref{eqMultiplicationL2} identifies
certain quadratic expressions in the $\{Q_j\}$ in terms of the
sections $F_{d,\alpha} \in H^0(\AV,\LL^4)$.  Here, we can view the
quadratic expressions in the $\{Q_j\}$ as elements of
the $10$-dimensional space $R_{0,2}$; since $I_{0,2} = 0$, we can
identify $R_{0,2}$ both with $\Rbar_{0,2}$ and with its
$10$-dimensional image inside $H^0(\AV,\LL^4)$.  It follows from the
proof of Corollary~\ref{corFdalphaNonvanishing} that this image is
spanned by those $F_{d,\alpha}$ for which $F_{d,\alpha}(0) \neq 0$,
equivalently, for which $e_4(2d,\alpha) = 1$.

We wish to obtain a similar identification for the quadratic
expressions in the $\{P_j\}$, in other words for the elements of
$R_{2,0}$.  Since the $\{P_j\}$ are translations of 
the $\{Q_j\}$ by $D_1$, this means that we need to translate the
$F_{d,\alpha}$.  This amounts to the action of
$\widetilde{(-D_1)} = (-D_1, \phi) \in \tilde{A}_4 \subset \calG_4$.
According to~\eqref{eqThetaGroupActionOnAlgebraicThetas}
and~\eqref{eqThetaGroupAction}, we have
\begin{equation}
  \label{eqD1actionFdalpha}
  \begin{split}
  &\widetilde{(-D_1)} * F_{d,\alpha}
  = \sum_{c \in B_2} e_4(c,\alpha) e_4(d+c,-D_1) \theta_{[4],d+c}
  = e_4(d,-D_1) F_{d,\alpha-D_1},\\
  &\phi_{p+D_1}( F_{d,\alpha}(p+D_1) )
     = e_4(d, -D_1) F_{d,\alpha-D_1}(p).\\
  \end{split}
\end{equation}
Replace $p$ by $p+D_1$ in~\eqref{eqMultiplicationL2}, and write
$\hat{h}_{0,p}
  = (\phi_{p+D_1} \tensor \text{id}_{\LL_0^4})\compose h_{p+D_1,0}$
to obtain
\begin{equation}
  \label{eqMultiplicationLprime2}
  \begin{split}
    \hat{h}_{p,0}
      (\sum_{c \in B_2}
    &    e_4(c, \alpha) \theta_{[2],2d+c}(p+D_1)
             \tensor \theta_{[2],c}(p+D_1)
      )\\
    & = e_4(d, -D_1) F_{d,\alpha-D_1}(p) \tensor F_{d,\alpha}(0).\\
  \end{split}
\end{equation}
Thus the image of $R_{2,0}$ in $H^0(\AV, \LL^4)$ is spanned by those
$F_{d,\alpha-D_1}$  for which $F_{d,\alpha}(0) \neq 0$.  In other
words, the image is spanned by those $F_{d,\alpha}$ for which
$F_{d,\alpha+D_1}(0) \neq 0$, equivalently,
for which $e_4(2d,\alpha+D_1) = 1$.

Let us identify those $F_{d,\alpha}$ that appear in the intersection
of the images of $R_{0,2}$ and $R_{0,2}$.
The next lemma shows that there
are essentially~$6$ such common
choices of $F_{d,\alpha}$, where we limit $d$ and $\alpha$ to one
fixed representative of each coset in $B_4/B_2$ and $A_4/A_2$,
respectively.
Equivalently,
let us list the corresponding pairs
$(2d,2\alpha) \in B_2 \times A_2$.  Recall the notation
$B_2 = \{0=b_{00}, b_{10}, b_{01}, b_{11}\}$ from~\eqref{eqDefbij}
and~\eqref{eqDefQnew}.
\begin{lemma}
  \label{lemmaCommon2d2alpha}
The set of pairs $(2d,2\alpha)$ for which $F_{d,\alpha}$ belongs to
the image of both $R_{0,2}$ and $R_{2,0}$ is
\begin{equation}
  \label{eqSixPairs}
  \{ (0,0), (0,E_1), (0,E_2), (0,E_3), (b_{01},0), (b_{01},E_1) \}.
\end{equation}
\end{lemma}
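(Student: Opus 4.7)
The plan is to combine the explicit spanning descriptions of the images of $R_{0,2}$ and $R_{2,0}$ in $H^0(\AV,\LL^4)$ derived in the two paragraphs immediately preceding the lemma, and then carry out a small combinatorial enumeration over $B_2 \times A_2$. First I would restate those descriptions: the image of $R_{0,2}$ is spanned by those $F_{d,\alpha}$ with $e_4(2d,\alpha) = 1$ (from \eqref{eqMultiplicationL2} combined with Corollary~\ref{corFdalphaNonvanishing}), while the image of $R_{2,0}$ is spanned by those $F_{d,\alpha}$ with $e_4(2d,\alpha+D_1) = 1$ (from \eqref{eqMultiplicationLprime2}). Since $F_{d,\alpha}$ depends only on $(2d,2\alpha) \in B_2 \times A_2$ up to the sign coming from $F_{d+b',\alpha} = e_4(b',\alpha) F_{d,\alpha}$, the question reduces to listing the pairs $(2d,2\alpha)$ for which both conditions hold simultaneously.

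Next, by $\Z$-bilinearity of $e_4$ the conjunction of these two conditions is equivalent to $e_4(2d,\alpha) = 1$ together with $e_4(2d,D_1) = 1$. I would then invoke the identity $e_4(b,\gamma) = e_2(b,2\gamma)$ for $b \in B_2$ and $\gamma \in A_4$ (already used in the proof of Corollary~\ref{corFdalphaNonvanishing}) to rewrite these two conditions respectively as $e_2(2d,2\alpha) = 1$ and $e_2(2d,E_1) = 1$, using $E_1 = 2D_1$. By the labeling convention $e_2(b_{ij},E_1) = (-1)^i$ from~\eqref{eqDefbij}, the second condition forces $2d \in \{0, b_{01}\}$.

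Finally I would enumerate the admissible $2\alpha$ in each case. For $2d = 0$ the first condition is automatic, so all four $2\alpha \in A_2 = \{0, E_1, E_2, E_3\}$ contribute, giving the pairs $(0,0), (0,E_1), (0,E_2), (0,E_3)$. For $2d = b_{01}$ the values $e_2(b_{01}, E_k)$ for $k=0,1,2,3$ are $1,1,-1,-1$ respectively (using $e_2(b_{01}, E_2) = -1$ from~\eqref{eqDefbij} together with bilinearity for $E_3 = E_1 + E_2$), so only $2\alpha \in \{0, E_1\}$ are admissible, yielding the pairs $(b_{01},0)$ and $(b_{01},E_1)$. Taken together these are exactly the six pairs in~\eqref{eqSixPairs}. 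The main subtle point in the argument is the translation from $e_4$-conditions to $e_2$-conditions, since $e_4$ restricted to $\AV[2] \times \AV[2]$ is trivial and one must instead use the ``doubling'' identity $e_4(b,\gamma) = e_2(b,2\gamma)$ to see the nontrivial character structure on $B_2$.
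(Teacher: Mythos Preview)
Your proof is correct and follows essentially the same approach as the paper: both reduce the two $e_4$-conditions to the pair $e_2(2d,2\alpha)=1$ and $e_2(2d,E_1)=1$ via the identity $e_4(b,\gamma)=e_2(b,2\gamma)$, then use \eqref{eqDefbij} to pin down $2d\in\{0,b_{01}\}$ and enumerate. The only cosmetic difference is that the paper first converts each condition separately to $e_2$ and then takes their ratio, whereas you first take the ratio in $e_4$ using bilinearity and then convert; the paper also cites \cite{MumfordAV} for the doubling identity rather than its earlier appearance in Corollary~\ref{corFdalphaNonvanishing}.
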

\begin{proof}
First note that if $b \in B_2$ and $\alpha \in A_4$, we have
$e_4(b,\alpha) = e_2(b,2\alpha)$; this is property~(4) on p.~228
of~\cite{MumfordAV} (and it holds more generally for all
$b \in \AV[2]$ and $\alpha \in \AV[4]$).
Our desired condition on $(2d,2\alpha)$ is therefore equivalent to
$e_2(2d,2\alpha) = e_2(2d,2\alpha + E_1) = 1$, because $2D_1 = E_1$.
In particular, $e_2(2d,E_1) = 1$, which forces
$2d \in \{0, b_{01}\}$.  The rest of the calculation is easy.
\end{proof}

From now until the end of the proof of
Proposition~\ref{propositionI21}, it is convenient to introduce 
notation for the six common sections $F_{d,\alpha}$ that we have just
identified above.  First choose any $\alpha_1, \alpha_2, \alpha_3 \in A_4$
with $2\alpha_i = E_i = -E_i$ (for example, we can take
$\alpha_1 = D_1$, but the choice of the $\alpha_i$ does not matter).
Next choose $\delta \in B_4$ with $2\delta = b_{01}$.
Note that $(e_4(\delta,-D_1))^2 = e_4(b_{01},-D_1) = e_2(b_{01},E_1) = 1$.
Hence $e_4(\delta,-D_1) \in \{ \pm 1\}$.  We can modify if necessary
our initial choice of $\delta$ by replacing it with $\delta + b_{10}$;
this does not
change $2\delta$, but it modifies $e_4(\delta,-D_1)$ by a factor of
$e_4(b_{10},-D_1) = e_2(b_{10},E_1) = -1$.  So without loss of
generality, we can arrange for $e_4(\delta,-D_1) = 1$.  We now write
\begin{equation}
\label{eqnTemporarypqTUtunew}
\begin{split}
& S_{00} = F_{0,0},           \qquad
  S_{10} = F_{0,\alpha_1},    \qquad
  S_{01} = F_{0,\alpha_2},    \qquad
  S_{11} = F_{0,\alpha_3},    \\
& s_{00} = S_{00}(0),         \quad\>
  s_{10} = S_{10}(0),         \quad\>\>
  s_{01} = S_{01}(0),         \qquad\>
  s_{11} = S_{11}(0),         \\
& T_0 = F_{\delta,0},         \qquad\>
  T_1 = F_{\delta,\alpha_1},  \qquad\>\>
  t_0 = T_0(0),               \qquad\quad
  t_1 = T_1(0).               \\
\end{split}
\end{equation}
Here $S_{00}, S_{10}, S_{01}, S_{11}, T_0, T_1 \in H^0(\AV,\LL^4)$, and
$s_{00}, s_{10}, s_{01}, s_{11}, t_0, t_1 \in \LL_0^4$.
Note that by our discussion above, the Thetanullwerte
$s_{00}, \dots, t_1$ are all nonzero.

We now have the following explicit formulas.

\begin{proposition}
\label{propositionP2Q2TUnew}
In the formulas below, we use multiplicative notation to write
tensor products of elements of fibers of the same line bundle;
see the pedantic remark about multiplication in the proof of
Corollary~\ref{corFdalphaNonvanishing}.
This means that we write, for example,
$P_{00}(p) \tensor P_{00}(p)$
(or respectively $P_{00}(p) \tensor P_{01}(p)$)
as $P_{00}(p)^2$ (or respectively $P_{00}(p) P_{01}(p)$).
We then have, for $p \in \AV$:
\begin{equation}
\label{eqnP2Q2TUnew}
\begin{split}
\hat{h}_{p,0}(P_{00}(p)^2 + P_{10}(p)^2 + P_{01}(p)^2 + P_{11}(p)^2)
  & = S_{10}(p) \tensor s_{00},\\
\hat{h}_{p,0}(P_{00}(p)^2 - P_{10}(p)^2 + P_{01}(p)^2 - P_{11}(p)^2)
  &= S_{00}(p) \tensor s_{10},\\
\hat{h}_{p,0}(P_{00}(p)^2 + P_{10}(p)^2 - P_{01}(p)^2 - P_{11}(p)^2)
  &= S_{11}(p) \tensor s_{01},\\
\hat{h}_{p,0}(P_{00}(p)^2 - P_{10}(p)^2 - P_{01}(p)^2 + P_{11}(p)^2)
  &= S_{01}(p) \tensor s_{11},\\
\hat{h}_{p,0}(2(P_{00}(p)P_{01}(p) + P_{10}(p)P_{11}(p))) \qquad
  &= T_1(p) \tensor t_0,\\
\hat{h}_{p,0}(2(P_{00}(p)P_{01}(p) - P_{10}(p)P_{11}(p))) \qquad
  &= T_0(p) \tensor t_1,\\
h_{p,0}(Q_{00}(p)^2 + Q_{10}(p)^2 + Q_{01}(p)^2 + Q_{11}(p)^2)
  &= S_{00}(p) \tensor s_{00},\\
h_{p,0}(Q_{00}(p)^2 - Q_{10}(p)^2 + Q_{01}(p)^2 - Q_{11}(p)^2)
  &= S_{10}(p) \tensor s_{10},\\
h_{p,0}(Q_{00}(p)^2 + Q_{10}(p)^2 - Q_{01}(p)^2 - Q_{11}(p)^2)
  &= S_{01}(p) \tensor s_{01},\\
h_{p,0}(Q_{00}(p)^2 - Q_{10}(p)^2 - Q_{01}(p)^2 + Q_{11}(p)^2)
  &= S_{11}(p) \tensor s_{11},\\
h_{p,0}(2(Q_{00}(p)Q_{01}(p) + Q_{10}(p)Q_{11}(p))) \qquad
  &= T_0(p) \tensor t_0,\\
h_{p,0}(2(Q_{00}(p)Q_{01}(p) - Q_{10}(p)Q_{11}(p))) \qquad
  &= T_1(p) \tensor t_1.\\
\end{split}
\end{equation}
\end{proposition}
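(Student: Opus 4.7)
The plan is to verify all twelve identities by specializing the master formulas \eqref{eqMultiplicationL2} and \eqref{eqMultiplicationLprime2} to the six pairs $(d,\alpha)$ labeling the six sections $S_{00}, S_{10}, S_{01}, S_{11}, T_0, T_1$ introduced in \eqref{eqnTemporarypqTUtunew}: explicitly, $d = 0$ with $\alpha$ ranging over representatives in $\{0, \alpha_1, \alpha_2, \alpha_3\}$, and $d = \delta$ with $\alpha \in \{0, \alpha_1\}$. For each pair, the $Q$-version falls out of \eqref{eqMultiplicationL2} and the $P$-version from \eqref{eqMultiplicationLprime2}; the only actual content is the identification of certain characters on $B_2$ and certain $A_2$-coset reductions.

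For the six $Q$-identities, apply \eqref{eqMultiplicationL2} directly. The character $c \mapsto e_4(c, \alpha)$ restricted to $B_2$ equals $e_2(\cdot, 2\alpha)$, so by \eqref{eqDefbij} it acts on $b_{ij}$ as $1, (-1)^i, (-1)^j, (-1)^{i+j}$ when $2\alpha = 0, E_1, E_2, E_3$ respectively; these are exactly the sign patterns in the first four $Q$-identities, and the right-hand side $F_{0,\alpha}(p) \tensor F_{0,\alpha}(0)$ matches $S_{k\ell}(p) \tensor s_{k\ell}$. For $d = \delta$ with $2\delta = b_{01}$, the shift $c \mapsto 2\delta + c$ pairs $b_{00} \leftrightarrow b_{01}$ and $b_{10} \leftrightarrow b_{11}$, so the sum on the left of \eqref{eqMultiplicationL2} becomes $2(Q_{00} Q_{01} + Q_{10} Q_{11})$ for $\alpha = 0$ and $2(Q_{00} Q_{01} - Q_{10} Q_{11})$ for $\alpha = \alpha_1$; the factor of $2$ arises because each cross-product appears twice, using the commutativity of multiplication of sections pointed out in the proof of Corollary \ref{corFdalphaNonvanishing}.

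For the six $P$-identities, use \eqref{eqMultiplicationLprime2}, whose right-hand side is $e_4(d,-D_1)\, F_{d,\alpha - D_1}(p) \tensor F_{d, \alpha}(0)$. The same sign calculations on $B_2$ produce the same four sign patterns on the left for $d = 0$; what remains is to identify each $F_{d, \alpha - D_1}$ with a named section. This rests on three $A_2$-coset observations: $D_1 \equiv \alpha_1 \pmod{A_2}$ (since $2(D_1 - \alpha_1) = E_1 - E_1 = 0$), $\alpha_1 + \alpha_2 \equiv \alpha_3 \pmod{A_2}$ (from $E_1 + E_2 = E_3$), and $-\alpha_i \equiv \alpha_i \pmod{A_2}$ (since $2\alpha_i \in A_2$); together with the invariance of $F_{d,\cdot}$ under $A_2$-translation of its second argument, these cycle $\alpha - D_1$ for $\alpha \in \{0, \alpha_1, \alpha_2, \alpha_3\}$ through $\alpha_1, 0, \alpha_3, \alpha_2$ modulo $A_2$, producing the sections $S_{10}, S_{00}, S_{11}, S_{01}$ in exactly the order stated. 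For $d = \delta$, the prefactor $e_4(\delta, -D_1)$ equals $1$ by the choice of representative $\delta$ arranged just before \eqref{eqnTemporarypqTUtunew}, and the same coset reduction identifies $F_{\delta, -D_1}$ with $T_1$ and $F_{\delta, \alpha_1 - D_1}$ with $T_0$, yielding the last two $P$-identities.

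The proof is therefore essentially bookkeeping: the main subtlety lies in the $A_2$-coset reduction that identifies each $F_{d, \alpha - D_1}$ with its named counterpart, where the specific choice of the representative $\delta$ guaranteeing $e_4(\delta, -D_1) = 1$ is essential for the $T$-identities on the $P$-side to come out with the clean coefficients displayed.
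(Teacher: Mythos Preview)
Your proposal is correct and follows exactly the same route as the paper's own proof, which simply states that the identities are a restatement of \eqref{eqMultiplicationL2} and \eqref{eqMultiplicationLprime2} together with $e_4(c,\alpha_i) = e_2(c,E_i)$ from~\eqref{eqDefbij}. You have merely spelled out the bookkeeping (the $A_2$-coset reductions identifying $F_{d,\alpha-D_1}$ with the named sections, and the role of the normalization $e_4(\delta,-D_1)=1$) that the paper leaves implicit.
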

\begin{proof}
The above is just a restatement of our formulas
\eqref{eqMultiplicationL2} and~\eqref{eqMultiplicationLprime2},
using the fact that
$P_j(p) = \theta_{[2],b_j}(p+D_1)$ and
$Q_j(p) = \theta_{[2],b_j}(p)$.  Note that in the sums over
$c \in B_2 = \{ b_{00}, b_{10}, b_{01}, b_{11} \}$ we have
$e_4(c,\alpha_i) = e_2(c, E_i)$, whose values are given in~\eqref{eqDefbij}.
\end{proof}

Our next result uses Proposition~\ref{propositionP2Q2TUnew} to deduce
a projective equality between points in $\Projective^5$.  The six
coordinates of these points
correspond to the six common sections $F_{d,\alpha}$ from above.
As in the discussion following~\eqref{eqDefKappa}, we
allow the coordinates in~$\Projective^5$ 
to be elements of the same one-dimensional vector space instead of
$\Kbar$.  Note also that if $V$ and $W$ are both one-dimensional
vector spaces, and $h: V \to W$ is an isomorphism, then we always have
$[h(v_1), \dots, h(v_6)] = [v_1, \dots, v_6]$.  We will use this
tacitly throughout.  One example is with
$V = \LL_p^2 \tensor \LL_p^2 = \LL_p^4$, a fiber of
our line bundle, and the isomorphism
$h_{p,0}: \LL_p^2 \tensor \LL_p^2 \to \LL_p^4 \tensor \LL_0^4 = W$.
We also choose an isomorphism $\LL_0^4 \isomorphic \Kbar$, and
identify the Thetanullwerte $s_{00},\dots,t_1$ with elements
$\tilde{s}_{00}, \dots, \tilde{t}_1 \in \Kbar$ (which, as we know, are
nonzero).  This allows us to replace a
tensor such as $S_{00}(p) \tensor s_{00} \in W$ by a product
$\tilde{s}_{00} S_{00}(p) \in \LL_p^4$ inside projective coordinates.

\begin{proposition}
  \label{propositionFirstDefABCDEF}
For every $p \in \AV$, the projective point (note the unusual order of
the coordinates) 
\begin{equation}
  \label{eqQ2VectorOfLen6}
 [Q_{01}(p)^2,Q_{01}(p)Q_{00}(p),Q_{00}(p)^2,
        Q_{11}(p)^2,Q_{11}(p)Q_{10}(p),Q_{10}(p)^2] \in \Projective^5
\end{equation}
is equal to
$[\tilde{A}(p),\tilde{B}(p),\tilde{C}(p),
      \tilde{D}(p),\tilde{E}(p),\tilde{F}(p)]$,
where $\tilde{A}, \dots, \tilde{F} \in H^0(\AV,\LL^4)$ are given by
\begin{equation}
\label{eqnsAFSTnew}
\begin{split}
\tilde{A} &= \tilde{s}_{00} S_{00} + \tilde{s}_{10}S_{10}
   - \tilde{s}_{01}S_{01} - \tilde{s}_{11}S_{11},\\
\tilde{B} &= \tilde{t}_0 T_0 + \tilde{t}_1 T_1,\\
\tilde{C} &= \tilde{s}_{00} S_{00} + \tilde{s}_{10}S_{10}
   + \tilde{s}_{01}S_{01} + \tilde{s}_{11}S_{11},\\  
\tilde{D} &= \tilde{s}_{00} S_{00} - \tilde{s}_{10}S_{10}
   - \tilde{s}_{01}S_{01} + \tilde{s}_{11}S_{11},\\
\tilde{E} &= \tilde{t}_0 T_0 - \tilde{t}_1 T_1,\\
\tilde{F} &= \tilde{s}_{00} S_{00} - \tilde{s}_{10}S_{10}
   + \tilde{s}_{01}S_{01} - \tilde{s}_{11}S_{11}.\\
\end{split}
\end{equation}
Moreover, there exist elements $A, \dots, F \in R_{2,0}$ for which
the projective point $[A(p), B(p), C(p), D(p), E(p), F(p)]$ is equal
to the vector in~\eqref{eqQ2VectorOfLen6}.  Specifically, define the
elements $\hat{S}_{00}, \dots, \hat{T}_1 \in R_{2,0}$ by
\begin{equation}
  \label{eqABCDEFinTermsOfPpart1}
  \begin{split}
    \hat{S}_{00}
    &= \tilde{s}_{10}^{-1}
         (P_{00}^2 - P_{10}^2 + P_{01}^2 - P_{11}^2),\\
    \hat{S}_{10}
    &= \tilde{s}_{00}^{-1}
         (P_{00}^2 + P_{10}^2 + P_{01}^2 + P_{11}^2),\\
    \hat{S}_{01}
    &= \tilde{s}_{11}^{-1}
         (P_{00}^2 - P_{10}^2 - P_{01}^2 + P_{11}^2),\\
    \hat{S}_{11}
    &= \tilde{s}_{01}^{-1}
         (P_{00}^2 + P_{10}^2 - P_{01}^2 - P_{11}^2),\\
    \hat{T}_0
    &= \tilde{t}_1^{-1}(2(P_{00}P_{01} - P_{10}P_{11})),\\
    \hat{T}_1
    &= \tilde{t}_0^{-1}(2(P_{00}P_{01} + P_{10}P_{11})).\\
  \end{split}
\end{equation}
Then define, in a way parallel to~\eqref{eqnsAFSTnew},
$A = \tilde{s}_{00} \hat{S}_{00} + \tilde{s}_{10}\hat{S}_{10}
        - \tilde{s}_{01}\hat{S}_{01} - \tilde{s}_{11}\hat{S}_{11}$,
$B = \tilde{t}_0 \hat{T}_0 + \tilde{t}_1 \hat{T}_1$,
and so forth, until
$F = \tilde{s}_{00} \hat{S}_{00} - \tilde{s}_{10}\hat{S}_{10}
        + \tilde{s}_{01}\hat{S}_{01} - \tilde{s}_{11}\hat{S}_{11}$.
\end{proposition}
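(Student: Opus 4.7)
The plan is to derive both assertions directly from the twelve identities in Proposition~\ref{propositionP2Q2TUnew}, by inverting two small Walsh--Hadamard transforms and then matching the resulting linear combinations of the sections $S_{ij}, T_i$ against the definitions of $\tilde A, \dots, \tilde F$ in~\eqref{eqnsAFSTnew}. No further input from theta-function theory is required; the whole argument is linear algebra plus index bookkeeping.

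For the first equality, I would apply inversion to the last six lines of~\eqref{eqnP2Q2TUnew}. The four $\pm 1$-combinations of $\{Q_{00}(p)^2, Q_{10}(p)^2, Q_{01}(p)^2, Q_{11}(p)^2\}$ appearing there form a $4\times 4$ Walsh--Hadamard transform whose outputs (after applying $h_{p,0}$ and identifying $\LL_0^4 \isomorphic \Kbar$) are $\tilde s_{00}S_{00}(p), \tilde s_{10}S_{10}(p), \tilde s_{01}S_{01}(p), \tilde s_{11}S_{11}(p)$. Inverting (a rescaling by $1/4$ of the same matrix) expresses each $4Q_{ij}(p)^2$ as a $\pm 1$-combination of the four $\tilde s_{\mu\nu} S_{\mu\nu}(p)$; direct inspection shows these are exactly $\tilde C, \tilde F, \tilde A, \tilde D$ for $ij = 00, 10, 01, 11$ respectively. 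The analogous $2\times 2$ inversion of the last two lines of~\eqref{eqnP2Q2TUnew} yields $4Q_{00}(p)Q_{01}(p) = \tilde B(p)$ and $4Q_{10}(p)Q_{11}(p) = \tilde E(p)$. Since the common factor~$4$ and the common isomorphism $h_{p,0}$ do not change a projective point, this gives the first equality in~$\Projective^5$.

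For the second assertion I would run the same procedure on the first six lines of~\eqref{eqnP2Q2TUnew}, but now the correspondence between $P$-quadratics and sections carries a permutation: the four Hadamard outputs on the $P$-side are $\tilde s_{00} S_{10}, \tilde s_{10} S_{00}, \tilde s_{01} S_{11}, \tilde s_{11} S_{01}$, and the two $T$-combinations give $\tilde t_0 T_1$ and $\tilde t_1 T_0$. This swap reflects the translation by $D_1$ and the attendant shift $\alpha \mapsto \alpha - D_1$ of~\eqref{eqD1actionFdalpha}, and it is exactly what forces the choice of scalar factors $\tilde s_{10}^{-1}, \tilde s_{00}^{-1}, \tilde s_{11}^{-1}, \tilde s_{01}^{-1}, \tilde t_1^{-1}, \tilde t_0^{-1}$ in the definitions~\eqref{eqABCDEFinTermsOfPpart1} of $\hat S_{ij}, \hat T_i \in R_{2,0}$. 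With those scalings, a direct substitution verifies that $\hat h_{p,0}(\hat S_{ij}(p)) = S_{ij}(p)$ and $\hat h_{p,0}(\hat T_i(p)) = T_i(p)$ for all $p \in \AV$. Since $A, \dots, F$ are assembled from $\hat S_{ij}, \hat T_i$ by exactly the same linear recipe that produces $\tilde A, \dots, \tilde F$ from $S_{ij}, T_i$, linearity gives $\hat h_{p,0}(A(p)) = \tilde A(p)$ and similarly for $B, \dots, F$, whence the projective point $[A(p): \dots :F(p)]$ equals $[\tilde A(p): \dots :\tilde F(p)]$.

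There is no conceptual obstacle; the whole proof reduces to linear algebra together with keeping straight the two pieces of index bookkeeping, namely the unusual ordering of coordinates in~\eqref{eqQ2VectorOfLen6} and the index swap on the $P$-side induced by translation by $D_1$. The nonvanishing of the six Thetanullwerte $\tilde s_{00}, \dots, \tilde t_1$ needed in order to divide in~\eqref{eqABCDEFinTermsOfPpart1} is already guaranteed by the first part of Corollary~\ref{corFdalphaNonvanishing}, together with the enumeration of common pairs $(2d, 2\alpha)$ in Lemma~\ref{lemmaCommon2d2alpha}.
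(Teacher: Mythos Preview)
Your proposal is correct and follows essentially the same approach as the paper's own proof: both invert the linear combinations in Proposition~\ref{propositionP2Q2TUnew} to express each $4Q_{ij}(p)^2$ and $4Q_{ij}(p)Q_{i'j'}(p)$ as one of the $\tilde A,\dots,\tilde F$, then observe that the common factor~$4$ and the isomorphism $h_{p,0}$ (respectively $\hat h_{p,0}$) disappear projectively, and finally note that the $\hat S_{ij},\hat T_i$ are defined precisely so that $\hat h_{p,0}(\hat S_{ij}(p)) = S_{ij}(p)$ and $\hat h_{p,0}(\hat T_i(p)) = T_i(p)$. Your write-up is somewhat more explicit than the paper's (naming the Walsh--Hadamard structure and spelling out the index swap induced by translation by~$D_1$), but the underlying argument is identical.
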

\begin{proof}
The first assertion amounts to combining the equations
in~\eqref{eqnP2Q2TUnew} to obtain statements such as
$h_{p,0}(4 Q_{0,1}(p)^2)
  = S_{00}(p) \tensor s_{00} + S_{10}(p) \tensor s_{10}
    -  S_{01}(p) \tensor s_{01} - S_{11}(p) \tensor s_{11}$,
and similarly for $h_{p,0}$ applied to $4$ times the other components
of~\eqref{eqQ2VectorOfLen6}.  The isomorphism $h_{p,0}$ and the
common factor~$4$, as well as the identification of $\LL_0^4$ with
$\Kbar$, do not affect the projective point.

The second assertion holds because the $\hat{S}_j$ (similarly for
$\hat{T}_0, \hat{T}_1$) are precisely those
elements of $R_{2,0}$ that satisfy
$\hat{h}_{p,0}(\hat{S}_j(p)) = S_j(p)$, once we take into account the
identification of $\LL_0^4$ with~$\Kbar$.  Thus we have an equality of
projective points
$[\tilde{A}(p),\dots,\tilde{F}(p)] = [A(p), \dots, F(p)]$.
\end{proof}

We can finally give the promised elements of $I_{2,1}$ and a formula
for the Kummer quartic in $I_{4,0}$ (this is the quartic that defines
the image of $\kappa'$).  Essentially the same result
holds with the roles of the $P$s and $Q$s reversed, giving us elements
of $I_{1,2}$ and $I_{0,4}$.  Note however that when we replace $Q_j$
by $P_j$, which amounts to translation by $D_1$, we replace each $P_j$
(itself already a translate of $Q_j$ by $D_1$)
with
the result of translating 
$Q_j$ by $E_1 = 2D_1$;
so we replace $P_j$ by $\widetilde{E_1}*Q_j \in \{ \pm Q_j\}$.
This introduces some sign changes, but
does not affect the structure of the result.

\begin{proposition}
\label{propEltsI21I40new}
With the abovementioned elements $A,B,C,D,E,F \in R_{2,0}$ from
Proposition~\ref{propositionFirstDefABCDEF}, we have that the
following elements belong to the ideal $I$: 
\begin{equation}
\label{eqnI21PropBnew}
Q_{00} A - Q_{01} B,
\quad
Q_{00} B - Q_{01} C,
\quad
Q_{10} D - Q_{11} E,
\quad
Q_{10} E - Q_{11} F 
\in I_{2,1}.
\end{equation}
We also obtain
\begin{equation}
\label{eqnI21PropCnew}
AC - B^2, \quad DF - E^2 \in I_{4,0}.
\end{equation}
Both $AC-B^2$ and $DF - E^2$ must be multiples of the Kummer quartic;
in fact, they are equal, and
\begin{equation}
\label{eqnI21PropDnew}
\begin{split}
& AC-B^2 = DF-E^2 \\
  & = \frac{\tilde{s}_{00}^2}{\tilde{s}_{10}^2}
          (P_{00}^2 - P_{10}^2 + P_{01}^2 - P_{11}^2)^2 \\
  & \quad + \frac{\tilde{s}_{10}^2}{\tilde{s}_{00}^2}
          (P_{00}^2 + P_{10}^2 + P_{01}^2 + P_{11}^2)^2 \\
  & \quad - \frac{\tilde{s}_{01}^2}{\tilde{s}_{11}^2}
          (P_{00}^2 - P_{10}^2 - P_{01}^2 + P_{11}^2)^2 \\
  & \quad - \frac{\tilde{s}_{11}^2}{\tilde{s}_{01}^2}
          (P_{00}^2 + P_{10}^2 - P_{01}^2 - P_{11}^2)^2 \\ 
  & \quad - 4\frac{\tilde{t}_0^2}{\tilde{t}_1^2}
          (P_{00}P_{01} - P_{10}P_{11})^2 \\
  & \quad - 4\frac{\tilde{t}_1^2}{\tilde{t}_0^2}
          (P_{00}P_{01} + P_{10}P_{11})^2. \\
\end{split}
\end{equation}
\end{proposition}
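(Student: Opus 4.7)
The plan is to derive the three assertions from the projective identity established in Proposition~\ref{propositionFirstDefABCDEF}, and then to verify the explicit closed-form expression by direct expansion using~\eqref{eqABCDEFinTermsOfPpart1}.

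First I would handle the four elements of $I_{2,1}$ in~\eqref{eqnI21PropBnew}. Proposition~\ref{propositionFirstDefABCDEF} says that for every $p \in \AV$ the projective points $[A(p),B(p),C(p),D(p),E(p),F(p)]$ and $[Q_{01}(p)^2, Q_{01}(p)Q_{00}(p), Q_{00}(p)^2, Q_{11}(p)^2, Q_{11}(p)Q_{10}(p), Q_{10}(p)^2]$ agree in $\Projective^5$. So there is a common nonzero scaling factor $\lambda(p)$ (valued in the one-dimensional space $(\LL')^4_p \tensor \LL_p^{-4}$) such that $A(p) = \lambda(p) Q_{01}(p)^2$, $B(p) = \lambda(p) Q_{01}(p) Q_{00}(p)$, $C(p) = \lambda(p) Q_{00}(p)^2$, with analogous identities $D(p) = \lambda(p) Q_{11}(p)^2$, $E(p) = \lambda(p) Q_{11}(p) Q_{10}(p)$, $F(p) = \lambda(p) Q_{10}(p)^2$. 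Substituting, $(Q_{00} A - Q_{01} B)(p) = \lambda(p)(Q_{00}(p) Q_{01}(p)^2 - Q_{01}(p)^2 Q_{00}(p)) = 0$ for all $p$, so $Q_{00} A - Q_{01} B$ is a global section of $(\LL')^4 \tensor \LL^2$ vanishing identically on $\AV$, hence lies in $I_{2,1}$. The other three relations vanish for exactly the same reason.

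The same substitution shows $(AC - B^2)(p) = \lambda(p)^2(Q_{01}(p)^2 Q_{00}(p)^2 - (Q_{01}(p) Q_{00}(p))^2) = 0$ and similarly $(DF - E^2)(p) = 0$, so both belong to $I_{4,0}$. By Proposition~\ref{Ik0I0k}, $I_{4,0}$ is one-dimensional and spanned by the Kummer quartic in the $P_{ij}$, so each of these differences is a scalar multiple of that quartic.

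The final step is the explicit formula~\eqref{eqnI21PropDnew} and the equality $AC - B^2 = DF - E^2$. Abbreviate $X = \tilde{s}_{00} \hat{S}_{00}$, $Y = \tilde{s}_{10} \hat{S}_{10}$, $Z = \tilde{s}_{01} \hat{S}_{01}$, $W = \tilde{s}_{11} \hat{S}_{11}$, so that $A = (X+Y) - (Z+W)$, $C = (X+Y) + (Z+W)$, $D = (X-Y) - (Z-W)$, $F = (X-Y) + (Z-W)$; therefore $AC = (X+Y)^2 - (Z+W)^2$ and $DF = (X-Y)^2 - (Z-W)^2$, giving $AC - DF = 4(XY - ZW)$. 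On the other hand $B^2 - E^2 = 4 \tilde{t}_0 \tilde{t}_1 \hat{T}_0 \hat{T}_1$. The pivotal identity to verify by direct substitution from~\eqref{eqABCDEFinTermsOfPpart1} is $XY - ZW = \tilde{t}_0 \tilde{t}_1 \hat{T}_0 \hat{T}_1$, both sides simplifying to $4 P_{00}^2 P_{01}^2 - 4 P_{10}^2 P_{11}^2$. This yields $AC - B^2 = DF - E^2$; and then, expanding $AC - B^2 = (X+Y)^2 - (Z+W)^2 - (\tilde{t}_0 \hat{T}_0 + \tilde{t}_1 \hat{T}_1)^2$, the cross-term contribution $2(XY - ZW) - 2 \tilde{t}_0 \tilde{t}_1 \hat{T}_0 \hat{T}_1$ vanishes by the same identity, leaving $X^2 + Y^2 - Z^2 - W^2 - \tilde{t}_0^2 \hat{T}_0^2 - \tilde{t}_1^2 \hat{T}_1^2$, which matches~\eqref{eqnI21PropDnew} term by term after substituting~\eqref{eqABCDEFinTermsOfPpart1}. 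The only obstacle is bookkeeping in the polynomial expansion; conceptually, everything follows immediately once the pivotal identity is in hand.
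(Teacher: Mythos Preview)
Your proof is correct and follows essentially the same route as the paper's. Both arguments deduce the vanishing in~\eqref{eqnI21PropBnew} and~\eqref{eqnI21PropCnew} directly from the projective equality of Proposition~\ref{propositionFirstDefABCDEF} (the paper phrases this as ``proportional'' where you introduce the scalar $\lambda(p)$), and both verify~\eqref{eqnI21PropDnew} by the identical computation: writing $AC-B^2$ and $DF-E^2$ in terms of the $\hat{S}_j,\hat{T}_i$, checking that the difference $4(\tilde{s}_{00}\tilde{s}_{10}\hat{S}_{00}\hat{S}_{10} - \tilde{s}_{01}\tilde{s}_{11}\hat{S}_{01}\hat{S}_{11} - \tilde{t}_0\tilde{t}_1\hat{T}_0\hat{T}_1)$ vanishes identically (your ``pivotal identity'' is exactly the paper's~\eqref{eqnACB2DFE2diffnew}), and reading off the common value as the sum of squares~\eqref{eqnACB2DFE2commonnew}.
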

\begin{proof}
Consider any of the expressions in \eqref{eqnI21PropBnew}
and~\eqref{eqnI21PropCnew}.  To show that such an expression belongs
to the ideal $I$, we must check that it vanishes at all $p \in \AV$.
This follows from the equality between the projective point
$[A(p),B(p),\dots,F(p)]$ and the projective
point $[Q_{01}(p)^2,Q_{01}(p)Q_{00}(p),\dots,Q_{10}(p)^2]$
from~\eqref{eqQ2VectorOfLen6}.
For example, 
$Q_{00}(p) A(p) - Q_{01}(p) B(p)$ (which is technically an element of
$\LL_p^2 \tensor (\LL'_p)^4$) is ``proportional'' to the element
$Q_{00}(p)(Q_{01}(p)^2) - Q_{01}(p)(Q_{01}(p)Q_{00}(p))$,
which belongs to $\LL_p^2 \tensor \LL_p^4 = \LL_p^6$.  This element
vanishes, because taking products in tensor powers of $\LL_p$ is commutative.

At this point, it is also possible to prove~\eqref{eqnI21PropDnew}
directly by expressing all of $A,\dots,F$ in terms of the $P_i$ and
expanding.  This is too large to do by hand in that form, but the
computation becomes quite approachable if we use our elements
$\hat{S}_{00}, \dots, \hat{T}_1 \in R_{2,0}$, as given
in~\eqref{eqABCDEFinTermsOfPpart1}.
We have
\begin{equation}
\label{eqnACB2DFE2new}
\begin{split}
AC-B^2
  &= (\tilde{s}_{00} \hat{S}_{00} + \tilde{s}_{10} \hat{S}_{10})^2 -
(\tilde{s}_{01} \hat{S}_{01} + \tilde{s}_{11}\hat{S}_{11})^2
    - ( \tilde{t}_0 \hat{T}_0 + \tilde{t}_1 \hat{T}_1)^2,\\
DF-E^2
  &= (\tilde{s}_{00} \hat{S}_{00} - \tilde{s}_{10} \hat{S}_{10})^2 -
(\tilde{s}_{01} \hat{S}_{01} - \tilde{s}_{11} \hat{S}_{11})^2
   - (\tilde{t}_0 \hat{T}_0 - \tilde{t}_1 \hat{T}_1)^2.\\
\end{split}
\end{equation}
One-quarter of the difference is then
\begin{equation}
\label{eqnACB2DFE2diffnew}
\begin{split}
&4^{-1}((AC-B^2) - (DF-E^2))
  = \tilde{s}_{00} \tilde{s}_{10} \hat{S}_{00} \hat{S}_{10}
        - \tilde{s}_{01} \tilde{s}_{11} \hat{S}_{01} \hat{S}_{11}
      - \tilde{t}_0 \tilde{t}_1 \hat{T}_0 \hat{T}_1\\
&= (P_{00}^2 - P_{10}^2 + P_{01}^2 - P_{11}^2)
        (P_{00}^2 + P_{10}^2 + P_{01}^2 + P_{11}^2)\\
& \qquad - (P_{00}^2 - P_{10}^2 - P_{01}^2 + P_{11}^2)
               (P_{00}^2 + P_{10}^2 - P_{01}^2 - P_{11}^2)\\
& \qquad -4(P_{00}P_{01} - P_{10}P_{11})(P_{00}P_{01} + P_{10}P_{11})\\ 
&= (P_{00}^2 + P_{01}^2)^2 - (P_{10}^2 + P_{11}^2)^2
  - (P_{00}^2 - P_{01}^2)^2 + (P_{10}^2 - P_{11}^2)^2\\
  &\qquad - 4P_{00}^2 P_{01}^2 + 4P_{10}^2 P_{11}^2\\
&= 0.\\
\end{split}
\end{equation}
The common value of $AC-B^2$ and $DF-E^2$ is the parts
of~\eqref{eqnACB2DFE2new} that are not affected by the sign changes
between the two lines.  In other words,
\begin{equation}
\label{eqnACB2DFE2commonnew}
\begin{split}
& AC-B^2 = DF-E^2 \\
  & = \tilde{s}_{00}^2 \hat{S}_{00}^2 + \tilde{s}_{10}^2 \hat{S}_{10}^2 
         - \tilde{s}_{01}^2 \hat{S}_{01}^2 - \tilde{s}_{11}^2 \hat{S}_{11}^2  
       - \tilde{t}_0^2 \hat{T}_0^2 - \tilde{t}_1^2 \hat{T}_1^2 ,\\
\end{split}
\end{equation}
and this proves~\eqref{eqnI21PropDnew}.
\end{proof}

\begin{remark}
\label{rmkOnThetaConstsnew}
We note that the constants such as $\frac{\tilde{s}_{00}^2}{\tilde{s}_{10}^2}$
and $\frac{\tilde{t}_0^2}{\tilde{t}_1^2}$ appearing
in~\eqref{eqnI21PropDnew} are independent of the identification made
between $\LL_0^4$ and $\Kbar$; they could equally well have been
written as $\frac{s_{00}^2}{s_{10}^2}$ and $\frac{t_0^2}{t_1^2}$, with
the obvious interpretation of quotients of
(nonzero)
elements of $\LL_0^4$.
\end{remark}

We still need to show that the common value of $AC-B^2$ and~$DF-F^2$
is not zero.  To do this, we need to study the values of
the Thetanullwerte $\tilde{s}_j$ and $\tilde{t}_i$, as well as their
relation to the values $q_j = Q_j(0) \in \LL_0^2$.  Analogously to our
previous definition, we choose an isomorphism between $\LL_0^2$
and~$\Kbar$, under which each $q_j$ can be identified with
$\tilde{q}_j \in \Kbar$, for $j \in \{00,10,01,10\}$.  Hence
$\kappa(0)
  = [\tilde{q}_{00}, \tilde{q}_{10}, \tilde{q}_{01}, \tilde{q}_{11}]$.

We choose our identification between the $q_j$ and the $\tilde{q}_j$ so 
as to have actual equality in the corresponding equations
from~\eqref{eqnP2Q2TUnew}, when $p=0$, after also composing with the
isomorphisms $h_{0,0}$; otherwise, we would only have an equality of
projective points.  Having done all this, we obtain the following
identities:
\begin{equation}
  \label{eqnP2Q2TUatZeroNew}
  \begin{split}
\tilde{q}_{00}^2 + \tilde{q}_{10}^2 + \tilde{q}_{01}^2 + \tilde{q}_{11}^2
  &= \tilde{s}_{00}^2,\\
\tilde{q}_{00}^2 - \tilde{q}_{10}^2 + \tilde{q}_{01}^2 - \tilde{q}_{11}^2
  &= \tilde{s}_{10}^2,\\
\tilde{q}_{00}^2 + \tilde{q}_{10}^2 - \tilde{q}_{01}^2 - \tilde{q}_{11}^2
  &= \tilde{s}_{01}^2,\\
\tilde{q}_{00}^2 - \tilde{q}_{10}^2 - \tilde{q}_{01}^2 + \tilde{q}_{11}^2
  &= \tilde{s}_{11}^2,\\
2(\tilde{q}_{00}\tilde{q}_{01} + \tilde{q}_{10}\tilde{q}_{11}) 
  &= \tilde{t}_0^2,\\
2(\tilde{q}_{00}\tilde{q}_{01} - \tilde{q}_{10}\tilde{q}_{11}) 
  &= \tilde{t}_1^2.\\
\end{split}
\end{equation}
One can proceed similarly with the $P_j(0)$, if desired; we do not
need them for the treatment here.

\begin{lemma}
  \label{lemmaQzeroNonvanishing}
  Assume (as is the case in Section~\ref{section4}) that the quotient
  abelian variety $\AV' = \AV/A_2$ is the Jacobian of a genus~$2$
  curve $\calC'$, where $\calC'$ is related to $\calC$ via a Richelot
  isogeny.  Then 
  $\tilde{q}_{00}$, $\tilde{q}_{10}$, $\tilde{q}_{01}$, and
  $\tilde{q}_{11}$, are all nonzero.
\end{lemma}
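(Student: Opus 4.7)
The plan is to identify each value $\tilde{q}_j = Q_j(0)$, up to an isomorphism of one-dimensional fibers, with a value $\theta'(\bar{b}_j)$ of the principal theta function on the Richelot-dual Jacobian $\AV'$ at a specific $2$-torsion point $\bar{b}_j \in \AV'[2]$, and then to invoke Lemma~\ref{lemmaTheta1Nonvanishing} applied to $\AV'$. First, using $Q_j = \tilde{b}_j * Q_{00}$ from~\eqref{eqThetaGroupActionOnAlgebraicThetas} together with~\eqref{eqThetaGroupAction}, evaluation at $p = 0$ yields $Q_j(0) = \phi_{-b_j}(Q_{00}(b_j))$, where $\phi$ is the bundle-isomorphism component of $\tilde{b}_j \in \calG_2$. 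Since $\phi_{-b_j}$ is an isomorphism between one-dimensional $\Kbar$-vector spaces, it follows that $\tilde{q}_j = 0$ if and only if $Q_{00}(b_j) = 0$.

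Next I would descend $Q_{00}$ to $\AV'$. By~\eqref{eqEtildeAction}, $Q_{00}$ is the unique (up to scalar) $\tilde{A}_2$-invariant section in $H^0(\AV,\LL^2)$. Mimicking the descent argument in the proof of Proposition~\ref{propositionFixLL}, the Lagrangian lift $\tilde{A}_2 \subset \calG_2$ supplies descent data producing a symmetric line bundle $\LL'$ on $\AV' = \AV/A_2$ with $\pi^*\LL' = \LL^2$ (where $\pi: \AV \to \AV'$ is the quotient), giving the principal polarization on $\AV'$, together with a generator $\theta' \in H^0(\AV',\LL')$ satisfying $\pi^*\theta' = Q_{00}$. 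Hence $Q_{00}(b_j) = \theta'(\pi(b_j))$, and writing $\bar{B}_2 := \pi(B_2) \subset \AV'[2]$, the problem reduces to showing that $\theta'$ is nonzero at every element of $\bar{B}_2$. The condition $A_2 \cap B_2 = 0$ from our symplectic decomposition of $\AV[2]$ forces $\pi$ to inject $B_2$ into $\AV'[2]$, so $|\bar{B}_2| = 4$; moreover $\bar{B}_2$ is Lagrangian for the Weil pairing on $\AV'[2]$ and is the kernel of the dual Richelot isogeny $\AV' \to \AV'/\bar{B}_2 \isomorphic \AV$. Applying Lemma~\ref{lemmaTheta1Nonvanishing}(1) to $\AV'$, the theta divisor of $\theta'$ meets $\AV'[2]$ in exactly six points.

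The hard part will be verifying that these six vanishing points are disjoint from $\bar{B}_2$. I would use the standard quadratic refinement $q': \AV'[2] \to \Z/2\Z$ of the Weil pairing associated to the symmetric bundle $\LL'$, characterized by $q'(x+y) = q'(x) + q'(y) + e_2(x,y)$ and by the property that $q'(x) = 1$ precisely when $\theta'(x) = 0$ for $x \in \AV'[2]$. Restricted to the Lagrangian $\bar{B}_2$, the form $q'$ is a group homomorphism to $\Z/2\Z$, so either it is identically zero (in which case $\theta'$ is nonzero at all four $\bar{b}_j$) or it has a two-element kernel (in which case $\theta'$ vanishes at exactly two of the $\bar{b}_j$). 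By descent of appropriate sections along $\bar{B}_2$ together with the dichotomy observed in the remark following Corollary~\ref{corFdalphaNonvanishing}, this latter case corresponds to $\AV'/\bar{B}_2$ being a product of two elliptic curves rather than the Jacobian of a smooth genus-$2$ curve. Since by hypothesis $\AV'/\bar{B}_2 \isomorphic \AV$ is the Jacobian of $\calC$, this forces $q'|_{\bar{B}_2} \equiv 0$, so each $\theta'(\bar{b}_j) \neq 0$, and therefore all four $\tilde{q}_j$ are nonzero.
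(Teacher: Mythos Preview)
Your first two paragraphs are correct and in fact match the paper's proof exactly: both identify $Q_j(0)$ with $Q_{00}(b_j)$ via the theta group action, descend $Q_{00}$ along $\tilde{A}_2$ to the principal theta function $\theta'$ on $\AV'$, and invoke Lemma~\ref{lemmaTheta1Nonvanishing} for $\AV'$ to reduce to showing that the six zeros of $\theta'$ in $\AV'[2]$ avoid $\bar{B}_2$.

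The gap is in your third paragraph. The implication ``$q'|_{\bar{B}_2}$ nontrivial $\Rightarrow$ $\AV'/\bar{B}_2$ is a product of two elliptic curves'' is not justified, and is in fact false. The splitting of a Richelot quotient is a codimension-one condition in the moduli of $\calC'$ (it is the vanishing of the determinant $\Delta$ in~\eqref{richeloteqn}), whereas the condition ``$q'|_L \neq 0$'' for a Lagrangian $L\subset\AV'[2]$ is a discrete combinatorial condition that is always satisfied by \emph{some} Lagrangian: each of the six odd $2$-torsion points lies in several Lagrangians, and for any such $L$ we have $q'|_L \neq 0$. So for a generic $\calC'$ all fifteen Richelot quotients are Jacobians, yet many Lagrangians have $q'|_L\neq 0$. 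The vague appeal to ``descent of appropriate sections'' and the remark after Corollary~\ref{corFdalphaNonvanishing} does not bridge this; that remark describes the vanishing pattern of $\theta_{[1],0}$ on $\AV$ itself, not a criterion on the kernel of an isogeny \emph{into} $\AV$.

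The paper avoids this entirely. Having set up the same descent, it parametrizes $\AV'[2]$ as $(A_4\oplus B_2)/A_2$, so that each point is $\alpha+b_j$ with $\alpha\in\{0,\alpha_1,\alpha_2,\alpha_3\}$, and converts $Q_{00}(\alpha+b_j)=0$ into $Q_j(\alpha)=0$. Then, working on $\AV$ (not $\AV'$), the Kummer identity $\kappa(\alpha_i)=\kappa(-\alpha_i)=\kappa(\alpha_i-E_i)$ together with the sign pattern of~\eqref{eqEprojectiveOnKappa} forces exactly two of the four values $Q_j(\alpha_i)$ to vanish for each $i\in\{1,2,3\}$. That already exhibits six odd points, all with $\alpha\neq 0$, so the count from Lemma~\ref{lemmaTheta1Nonvanishing} leaves no room for any odd point in $\bar{B}_2$. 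Replacing your third paragraph by this Kummer-symmetry argument fixes the proof.
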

\begin{proof}
We apply Lemma~\ref{lemmaTheta1Nonvanishing} to $\calC'$ and $\AV'$.
Essentially, the $\tilde{q}_j$s are even theta characteristics for
$\AV'$.  Since we have not set up extended theta structures that
include an explicit action of $[-1]$, we prove our lemma by a slightly
different approach.

The line bundle $\LL^2$ on~$\AV$ descends to a symmetric line bundle
$\LLhat$ on $\AV'$, along the lift from $A_2$ to $\tilde{A}_2
\subset \calG_2$.  Moreover, $\LLhat$ gives a principal polarization
on $\AV'$.  This means that we can view $Q_{00}$, which is invariant
under $\tilde{A}_2$, as the (unique, up to a scalar) nonzero
element of $H^0(\AV',\LLhat)$; thus $Q_{00}$ plays the same role as
$\theta_{[1],0} \in H^0(\AV,\LL)$, so $Q_{00}$ vanishes at precisely $6$
points of $\AV'[2]$.

The points of $\AV'[2]$ correspond to points of $(A_4 \directsum
B_2)/A_2$.  Represent each such point as $\alpha + b$, where
$\alpha \in \{0, \alpha_1, \alpha_2, \alpha_3\}$ as in the discussion
immediately after Lemma~\ref{lemmaCommon2d2alpha}, and
$b \in \{0 = b_{00}, b_{10}, b_{01}, b_{11}\}$.  Since
$\widetilde{b_j}*Q_{00} = Q_j$ for $j \in \{00,10,01,11\}$, the
question of whether $Q_{00}(\alpha+b_j) = 0$ is equivalent to whether
$Q_j(\alpha) = 0$, and this vanishing occurs for precisely six of the
16 choices for the pair $(j,\alpha)$.  On the other hand, we have
$\kappa(\alpha_i) = \kappa(-\alpha_i)$, but also
$-\alpha_i = \alpha_i - E_i$.  Hence we can
use~\eqref{eqEprojectiveOnKappa} to obtain identities of
projective points, such as for example
\begin{equation}
  \label{eqQatFourTorsion}
  \begin{split}
    \kappa(\alpha_1)
    &= [Q_{00}(\alpha_1),Q_{10}(\alpha_1),Q_{01}(\alpha_1),Q_{11}(\alpha_1)]
          \\
    = \kappa(\alpha_1 - E_1)
    &= [Q_{00}(\alpha_1),-Q_{10}(\alpha_1),Q_{01}(\alpha_1),-Q_{11}(\alpha_1)].
          \\ 
  \end{split}
\end{equation}
This equality in $\Projective^3$ implies that either
$Q_{10}(\alpha_1) = Q_{11}(\alpha_1) = 0$, or that
$Q_{00}(\alpha_1) = Q_{01}(\alpha_1) = 0$.  This identifies two points
of $\AV'[2]$ where $Q_{00}$ vanishes.  Similarly, using $\alpha_2$ and
$\alpha_3$, we identify four more points where $Q_{00}$ vanishes.
This brings our total to six, which are all associated to
points of $\AV'[2]$ with $\alpha \neq 0$.  In particular, all the
$Q_j(0)$ are nonzero, as desired.
\end{proof}

\begin{lemma}
\label{lemmaACminusBsquaredNonzero}
The expression $AC - B^2 = DF - E^2$ is a nonzero element of
$I_{4,0} \subset R_{4,0}$; it is therefore the Kummer quartic equation. 
\end{lemma}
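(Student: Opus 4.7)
The plan is to show $AC - B^2$ is nonzero by extracting a single monomial coefficient from the explicit formula \eqref{eqnI21PropDnew}; once that is confirmed, the rest of the lemma is automatic, because Proposition \ref{Ik0I0k} shows $I_{4,0}$ is one-dimensional, spanned by the Kummer quartic in the $P$-coordinates (the quartic defining the image of $\kappa'$), so any nonzero element of $I_{4,0}$ must be a scalar multiple of that quartic.

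I would single out the mixed monomial $P_{00} P_{01} P_{10} P_{11}$. Each of the first four summands in \eqref{eqnI21PropDnew} is a scalar multiple of the square of a $\Kbar$-linear combination of the $P_{ij}^2$, so its expansion involves only monomials of the shape $P_{ij}^4$ or $P_{ij}^2 P_{kl}^2$; in particular, these four summands contribute nothing to $P_{00}P_{01}P_{10}P_{11}$. Only the last two summands, containing $(P_{00}P_{01} \mp P_{10}P_{11})^2$, can produce that mixed monomial, through their cross terms $\mp 2 P_{00}P_{01}P_{10}P_{11}$ scaled by the outer prefactors $-4\tilde{t}_0^2/\tilde{t}_1^2$ and $-4\tilde{t}_1^2/\tilde{t}_0^2$. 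A short sign check gives the total coefficient of $P_{00}P_{01}P_{10}P_{11}$ as $8(\tilde{t}_0^4 - \tilde{t}_1^4)/(\tilde{t}_0^2 \tilde{t}_1^2)$.

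To see this coefficient is nonzero, I would combine the last two lines of \eqref{eqnP2Q2TUatZeroNew}, which yield $\tilde{t}_0^2 + \tilde{t}_1^2 = 4 \tilde{q}_{00}\tilde{q}_{01}$ and $\tilde{t}_0^2 - \tilde{t}_1^2 = 4 \tilde{q}_{10}\tilde{q}_{11}$, so that $\tilde{t}_0^4 - \tilde{t}_1^4 = 16\, \tilde{q}_{00}\tilde{q}_{01}\tilde{q}_{10}\tilde{q}_{11}$. The Thetanullwerte $\tilde{t}_0, \tilde{t}_1$ are already known to be nonzero (as remarked just after \eqref{eqnTemporarypqTUtunew}, via part (1) of Corollary \ref{corFdalphaNonvanishing} applied to our six common sections), and $\tilde{q}_{00}, \tilde{q}_{10}, \tilde{q}_{01}, \tilde{q}_{11}$ are all nonzero by Lemma \ref{lemmaQzeroNonvanishing}. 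Hence the coefficient of $P_{00}P_{01}P_{10}P_{11}$ is a nonzero element of $\Kbar$, so $AC - B^2$ is a nonzero element of $R_{4,0}$, and the lemma follows.

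The only substantive point of care is the sign-tracking in the expansion of the two relevant summands and the verification that the first four summands really are free of the mixed monomial; everything else is supplied by the already-established nonvanishing of the Thetanullwerte and of the $\tilde{q}_j$. No new geometric input is required beyond what has been set up above.
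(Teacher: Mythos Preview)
Your argument is exactly the first paragraph of the paper's proof, and it is correct as far as it goes: the coefficient of $P_{00}P_{01}P_{10}P_{11}$ is indeed a nonzero constant times $\tilde{t}_0^4 - \tilde{t}_1^4 = 16\,\tilde{q}_{00}\tilde{q}_{01}\tilde{q}_{10}\tilde{q}_{11}$, and under the hypothesis of Lemma~\ref{lemmaQzeroNonvanishing} this is nonzero.

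The gap is that you have silently assumed that hypothesis. Lemma~\ref{lemmaQzeroNonvanishing} requires that the quotient $\AV' = \AV/A_2$ is itself the Jacobian of a genus~$2$ curve (the Richelot situation of Section~\ref{section4}); in general $\AV'$ could be a product of two elliptic curves, and then one of the $\tilde{q}_j$ may vanish, so your chosen monomial coefficient can be zero. The paper therefore supplies a second argument for the general case: assuming $AC - B^2 = 0$, it equates to zero the coefficients of $P_{00}^2P_{10}^2$, $P_{00}^2P_{11}^2$, and $P_{00}^4$ in~\eqref{eqnI21PropDnew}, deduces that $\tilde{s}_{00}^2/\tilde{s}_{10}^2 = \tilde{s}_{10}^2/\tilde{s}_{00}^2 = \tilde{s}_{01}^2/\tilde{s}_{11}^2 = \tilde{s}_{11}^2/\tilde{s}_{01}^2 = \pm 1$, and then uses~\eqref{eqnP2Q2TUatZeroNew} to force either $\tilde{q}_{10} = \tilde{q}_{11} = 0$ or $\tilde{q}_{00} = \tilde{q}_{01} = 0$. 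Either alternative gives $\kappa(0) = \kappa(E_1)$ via~\eqref{eqEprojectiveOnKappa}, contradicting the fact that $\kappa$ separates points up to $\pm 1$. So your proof is fine for the application in Section~\ref{section4}, but to establish the lemma in the generality in which it is stated (and used for Theorem~\ref{generatorsInEachBidegree}), you need this additional step.
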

\begin{proof}
Under the hypotheses of Lemma~\ref{lemmaQzeroNonvanishing}, this follows
from the fact that the  
coefficient of $P_{00} P_{01} P_{10} P_{11}$ in~\eqref{eqnI21PropDnew}
is a constant times
$\frac{\tilde{t}_0^2}{\tilde{t}_1^2}
      - \frac{\tilde{t}_1^2}{\tilde{t}_0^2}$,
whose numerator, $\tilde{t}_0^4 - \tilde{t}_1^4$ is,
by~\eqref{eqnP2Q2TUatZeroNew}, a 
constant times
$\tilde{q}_{00}\tilde{q}_{01}\tilde{q}_{10}\tilde{q}_{11}$,
which is nonzero in this setting.

The proof in the general case takes more work.
If~\eqref{eqnI21PropDnew} is zero, then all its
coefficients must 
vanish.  Equating the coefficients of $P_{00}^2 P_{10}^2$,
$P_{00}^2 P_{11}^2$, and $P_{00}^4$ to zero,
we obtain that the only way that~\eqref{eqnI21PropDnew}
could be zero is if
\begin{equation}
\label{eqnIfACminusBsquaredVanishes} 
\frac{\tilde{s}_{00}^2}{\tilde{s}_{10}^2}
  = \frac{\tilde{s}_{10}^2}{\tilde{s}_{00}^2}
  = \frac{\tilde{s}_{01}^2}{\tilde{s}_{11}^2}
  = \frac{\tilde{s}_{11}^2}{\tilde{s}_{01}^2}.
\end{equation}
But then the common value
of~\eqref{eqnIfACminusBsquaredVanishes} is equal to its own
reciprocal, so must be $\pm 1$.

In case this common value is
$1$, then the equalities
$\tilde{s}_{00}^2 = \tilde{s}_{10}^2$ and
$\tilde{s}_{01}^2 = \tilde{s}_{11}^2$ imply
by~\eqref{eqnP2Q2TUatZeroNew} that
$\tilde{q}_{10} = \tilde{q}_{11} = 0$.
This means that $\kappa(0) = [\tilde{q}_{00}, 0, \tilde{q}_{01}, 0]$.
But then we would have $\kappa(0) = \kappa(E_1)$,
by~\eqref{eqEprojectiveOnKappa}.
This contradicts the
fact that the Kummer map sends two points $p,q \in \AV$ to the
same point $\kappa(p) = \kappa(q) \in \Projective^3$ if and only if
$p = \pm q$; see for 
example Step~I in the proof of Theorem~4.8.1
of~\cite{BirkenhakeLange}.  (Their proof works over an arbitrary
field; note that the divisor $\Theta$ is isomorphic to $\calC$, and is
therefore an irreducible variety.)
The case when the common value of~\eqref{eqnIfACminusBsquaredVanishes}
is $-1$ is analogous, since in that case we would obtain instead
$\tilde{q}_{00} = \tilde{q}_{01} = 0$.
\end{proof}

We can finally prove the stronger version of
Proposition~\ref{propositionI31}.

\begin{proposition}
\label{propositionI21}
For $k \geq 2$, we have $R_{k-2,0} \cdot I_{2,1} = I_{k,1}$ and
$R_{0,k-2} \cdot I_{1,2} = I_{1,k}$.  
\end{proposition}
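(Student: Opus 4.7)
The plan is: the case $k=2$ is the tautology $I_{2,1} = R_{0,0}\cdot I_{2,1}$; for $k\geq 4$, iterating Proposition~\ref{propositionI31} gives $I_{k,1} = R_{1,0}^{k-3}\cdot I_{3,1}$, and since $R_{1,0}^{k-2} = R_{k-2,0}$ (as $R$ is a polynomial ring), the result for $k\geq 4$ reduces to the case $k=3$. Thus the real new content is proving $I_{3,1} = R_{1,0}\cdot I_{2,1}$; the parallel statement $R_{0,k-2}\cdot I_{1,2} = I_{1,k}$ is handled by the symmetric argument with $P$ and $Q$ interchanged.

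For this key case: by Corollary~\ref{dimensionsIde}, $\dim I_{2,1} = 4$ and $\dim I_{3,1} = 16$, while $\dim R_{1,0} = 4$. I first check that the four explicit elements
\begin{equation*}
r_1 = Q_{00}A - Q_{01}B,\ \ r_2 = Q_{00}B - Q_{01}C,\ \ r_3 = Q_{10}D - Q_{11}E,\ \ r_4 = Q_{10}E - Q_{11}F
\end{equation*}
of $I_{2,1}$ from~\eqref{eqnI21PropBnew} form a basis: by~\eqref{eqABCDEFinTermsOfPpart1}, $A$ and $C$ are supported on the squared monomials $P_i^2$ while $B$ is supported on the mixed products $P_{00}P_{01}, P_{10}P_{11}$, making $\{A,B\}$ and $\{B,C\}$ linearly independent in $R_{2,0}$; the $Q$-supports of $\{r_1,r_2\}$ and $\{r_3,r_4\}$ are disjoint, so the four $r_j$ are independent. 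It then suffices to prove that the sixteen products $\{P_i r_j\}$ are linearly independent in $I_{3,1}$, because they would then span a $16$-dimensional subspace, which must equal $I_{3,1}$.

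Suppose $\sum_{i,j} c_{ij} P_i r_j = 0$ in $R_{3,1}$, and set $X = \sum_i c_{i1}P_i$, $Y = \sum_i c_{i2}P_i$, $X' = \sum_i c_{i3}P_i$, $Y' = \sum_i c_{i4}P_i$ in $R_{1,0}$. Since $\{Q_{00},Q_{01},Q_{10},Q_{11}\}$ is a basis of $R_{0,1}$, comparing coefficients gives the four equations $XA+YB=0$, $XB+YC=0$, $X'D+Y'E=0$, $X'E+Y'F=0$ in $R_{3,0}$. Eliminating $Y$ from the first two (multiply by $C$ and by $B$ respectively and subtract) yields $X(AC-B^2) = 0$ in the polynomial domain $R$; since $AC - B^2 \neq 0$ by Lemma~\ref{lemmaACminusBsquaredNonzero}, this forces $X=0$, hence $YB = 0$, and since $B\neq 0$ (its coefficient of $P_{00}P_{01}$ is a nonzero scalar) we get $Y = 0$. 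The identical argument using $DF-E^2\neq 0$ gives $X'=Y'=0$.

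The main obstacle is precisely the $k=3$ case, where Kempf's Theorem~\ref{thmKempfRelations} fails because the numerical condition $\ell_2\geq 5$ (with $\ell_2 = 2k = 4$) is violated, so the inductive mechanism of Proposition~\ref{propositionI31} cannot produce $I_{3,1}$ from $I_{2,1}$. The explicit basis of $I_{2,1}$ from Proposition~\ref{propEltsI21I40new}, together with the nonvanishing of the Kummer quartic $AC-B^2$ established in Lemma~\ref{lemmaACminusBsquaredNonzero}, are exactly what allow us to bypass Kempf's threshold in this boundary case.
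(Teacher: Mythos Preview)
Your proof is correct and rests on the same key idea as the paper's: the nonvanishing of $AC-B^2$ (equal to $DF-E^2$) from Lemma~\ref{lemmaACminusBsquaredNonzero} forces the four generators $r_1,\dots,r_4$ of $I_{2,1}$ to be linearly independent over $R_{*,0} = \Kbar[\{P_i\}]$, after which a dimension count against Corollary~\ref{dimensionsIde} finishes.  The paper packages this slightly differently: it observes at once that the four elements, viewed as vectors in $(R_{*,0})^4$ via the $Q$-basis, form a block-diagonal $4\times 4$ matrix with block determinants $-(AC-B^2)$ and $-(DF-E^2)$, hence generate a free $R_{*,0}$-module of rank~$4$; the bidegree-$(k,1)$ part of this free module then has dimension $4\dim R_{k-2,0} = \dim I_{k,1}$ for every $k\geq 2$ simultaneously.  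So the paper avoids the detour through Proposition~\ref{propositionI31} for $k\geq 4$ entirely, whereas you treat $k=3$ by hand and induct.  Both routes hinge on the same determinant computation.

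One small point: your assertion ``$B\neq 0$ since its coefficient of $P_{00}P_{01}$ is a nonzero scalar'' is not quite justified in general---that coefficient is proportional to $\tilde{t}_0^2+\tilde{t}_1^2 = 4\tilde{q}_{00}\tilde{q}_{01}$, which is only guaranteed nonzero under the Richelot hypothesis of Lemma~\ref{lemmaQzeroNonvanishing}.  The easy fix: after $X=0$ you have both $YB=0$ and $YC=0$; if $Y\neq 0$ then $B=C=0$, whence $AC-B^2=0$, contradicting Lemma~\ref{lemmaACminusBsquaredNonzero}.
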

\begin{proof}
As usual, we only treat the case of $I_{k,1}$.
For the purposes of this proof, we write
$R_{*,0} = \directsum_{d \geq 0} R_{d,0} = \Kbar[\{P_i\}]$, the ring of
polynomials in the $P_i$ alone.  We also identify
$R_{*,1} = \directsum_{d \geq 0} R_{d,1}$ with $(R_{*,0})^4$, using
the basis $\{Q_j\}$. We similarly identify
$I_{*,1} = \directsum_{d \geq 0} I_{d,1}$ with an $R_{*,0}$-submodule
of $(R_{*,0})^4$.
In this setting, the four elements of $I_{2,1}$ listed
in~\eqref{eqnI21PropBnew} correspond to the vectors
\begin{equation}
  \label{eqnI21vectors}
(A,-B,0,0),
\qquad
(B,-C,0,0),
\qquad
(0,0,D,-E),
\qquad
(0,0,E,-F).
\end{equation}
Our goal is to show that the $R_{*,0}$-submodule
of $(R_{*,0})^4$ generated by these four vectors corresponds precisely
to $I_{*,1}$.  Now these four
vectors are linearly independent, even over the field of fractions of
$R_{*,0}$ (that is, the field of rational functions in the $P_i$, viewed as
independent transcendentals).
Indeed, putting together these vectors into a $4\times4$ matrix with
entries in $R_{*,0}$, this matrix is block diagonal, with its
$2\times2$ subblocks having determinants $-AC+B^2$ and
$-DF+E^2$.  As we have seen, these determinants are equal and nonzero.

The above implies that our four elements of $I_{2,1}$ generate a free
$R_{*,0}$-module of rank four, and this free module is itself a
submodule of $I_{*,1}$.  Now the dimension of the bidegree $(k,1)$
component of this free module is exactly $4 \dim R_{k-2,0}$, by
viewing each element of the $(k,1)$-homogeneous component as a linear
combination of our four vectors, with coefficients in $R_{k-2,0}$.  We
also know that 
$\dim I_{k,1} = 4 \dim R_{k-2,0}$, by Corollary~\ref{dimensionsIde}.
So we have proved our desired result.

Our proof yields expressions for 
each of $(-AC+B^2,0,0,0)$, $(0,-AC+B^2,0,0)$, $(0,0,-DF+E^2,0)$, and
$(0,0,0,-DF+E^2)$ as an $R$-linear combination of our original four
vectors.  In other words, the product of the Kummer quartic by each
$Q_j$ is an easy combination of the elements from
$I_{2,1}$: for example, 
$C (Q_{00} A - Q_{01} B) - B (Q_{00} B - Q_{01} C) = Q_{00} ( AC - B^2)
 \in I_{4,1}$.
This gives a good way to see how our four elements of $I_{2,1}$
actually generate all multiples of the Kummer quartic in $I_{d,e}$
when $e \geq 1$.  On some level, the homogeneous component $I_{4,0}$
is only needed to generate the $I_{k,0}$ for $k \geq 4$.
\end{proof}

Putting all the above results together, we obtain the our basic
structural result about generators for the bigraded ideal of
relations between the $P_i$s and the $Q_j$s.  We state this slightly
more generally, to bring out the parts of the construction that will
matter in Section~\ref{section4}, when we carry out similar computations
while working carefully over a field of definition for the original
curve $\calC$.

\begin{theorem}
\label{generatorsInEachBidegree}
Let $\AV$ be the Jacobian of a genus~$2$ curve $\calC$ over a field~$K$
that is not of characteristic~$2$.  Let $\LL$ be a symmetric line
bundle on $\AV$ that gives rise to the principal polarization on
$\AV$, so that $\dim H^0(\AV,\LL^2) = 4$ and this linear series yields
the Kummer embedding of $\AV/[\pm 1]$ into $\Projective^3$.  Also
let $\LL'$ be the translate of $\LL$ by a point of order~$4$ in $\AV(K)$,
and consider the resulting embedding $\AV \hookrightarrow
\Projective^3 \times \Projective^3$ associated to the linear series of
$\LL^2$ and $(\LL')^2$.  Then the resulting bigraded ideal $I$ of
relations in this model is generated by:
\begin{itemize}
\item $I_{0,4}$ and $I_{4,0}$, which are each $1$-dimensional,
\item $I_{1,2}$ and $I_{2,1}$, which are each $4$-dimensional, and
\item $I_{2,2}$, which is $36$-dimensional.
\end{itemize}
Moreover, there are no equations of bidegree $(1,1)$, that is, $I_{1,1} = 0$.
\end{theorem}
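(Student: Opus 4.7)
My plan is to assemble the dimensional and propagation results developed earlier in the section into a case-by-case verification. I will first verify the claimed dimensions and the vanishing $I_{1,1} = 0$, and then show that every other homogeneous piece $I_{d,e}$ lies in the $R$-ideal generated by the five listed pieces, by propagating upward in $(d,e)$ using the multiplicative surjections already proved.

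The dimension computations are short and mostly extracted from earlier propositions. The vanishing $I_{1,1} = 0$ is exactly Proposition \ref{Rbar11}. Proposition \ref{Ik0I0k} gives $\dim I_{4,0} = \dim R_{0,0} = 1$, with the Kummer quartic (constructed explicitly in Proposition \ref{propEltsI21I40new}) as a generator, and symmetrically for $I_{0,4}$. Corollary \ref{dimensionsIde} specializes to $\dim I_{2,1} = \dim I_{1,2} = 4\binom{3}{3} = 4$, and we have already exhibited four concrete elements of $I_{2,1}$ (and $I_{1,2}$) in \eqref{eqnI21PropBnew}; the linear-independence argument in the proof of Proposition \ref{propositionI21} (the $4 \times 4$ block-diagonal matrix has nonzero determinant) shows these four are a basis. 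Finally, $\dim I_{2,2} = \binom{5}{3}^2 - 4 \cdot 16 = 36$ by Corollary \ref{dimensionsIde}.

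For the generation claim, I case-split on $(d,e)$. When $(d,e) \in \{(0,0),(1,0),(0,1),(1,1),(2,0),(3,0),(0,2),(0,3)\}$, the component $I_{d,e}$ is zero by Propositions \ref{Ik0I0k} and \ref{Rbar11}, so nothing is required. For $(k,0)$ with $k \geq 4$ (and symmetrically $(0,k)$), Proposition \ref{Ik0I0k} gives $I_{k,0} = R_{k-4,0} \cdot I_{4,0}$. For $(k,1)$ with $k \geq 2$ (and symmetrically $(1,k)$), Proposition \ref{propositionI21} gives $I_{k,1} = R_{k-2,0} \cdot I_{2,1}$. For $(d,e)$ with $d,e \geq 2$, I induct on $d+e$, taking $(2,2)$ as the base case: when $(d,e) > (2,2)$ with $d,e \geq 2$, Proposition \ref{propositionI22} yields either $I_{d,e} = R_{1,0} \cdot I_{d-1,e}$ (if $d \geq 3$) or $I_{d,e} = R_{0,1} \cdot I_{d,e-1}$ (if $e \geq 3$), and the inductive hypothesis applies to the smaller factor.

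No substantial obstacle remains: all of the hard work has already been done in the preceding propositions. The dimension counts rest on Kempf's multiplication theorem (Proposition \ref{RbarIgeq1geq1} and Corollary \ref{dimensionsIde}) together with Proposition \ref{Rbar11}, whose key ingredient is the nonvanishing of $F_{d,\alpha}$ at points of exact order~$8$ from Corollary \ref{corFdalphaNonvanishing}; the propagation statements rest on Kempf's theorem on relations (Theorem \ref{thmKempfRelations}) combined with the explicit generators of $I_{2,1}$ and $I_{1,2}$ built in Proposition \ref{propEltsI21I40new}. The theorem is therefore a matter of organizing these inputs cleanly.
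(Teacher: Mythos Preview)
Your proposal is correct and follows essentially the same route as the paper: both arguments split into the cases $(k,0)$ and $(0,k)$ via Proposition~\ref{Ik0I0k}, the cases $(k,1)$ and $(1,k)$ via Proposition~\ref{propositionI21}, and the cases $d,e \geq 2$ via Proposition~\ref{propositionI22}, with the dimensions read off from Proposition~\ref{Ik0I0k}, Proposition~\ref{Rbar11}, and Corollary~\ref{dimensionsIde}. The one point the paper makes explicit that you omit is the reduction from the ground field~$K$ to its algebraic closure~$\Kbar$: the theorem is stated over~$K$, but all the supporting propositions in the section were proved over~$\Kbar$, so the paper begins by observing that the dimensions of the $I_{d,e}$ and the ranks of the multiplication maps are unchanged under base change; you should add a sentence to this effect.
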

\begin{proof}
The dimensions of the components $I_{d,e}$, and the ranks of the
multiplication maps from $R_{d,e} \times I_{d',e'} \to
I_{d+d',e+e'}$, are unaffected by passing from~$K$ to its 
algebraic closure~$\Kbar$.  We may therefore place ourselves in the
situation that was studied in this section, with the Kummer embedding
given by $\kappa$, in terms of the $\{Q_j\}$ coordinates, and the
embedding into $\Projective^3 \times \Projective^3$ being given by the
$P$s and the $Q$s.  In that case,
the relations in bidegrees $(0,4)$ and $(4,0)$ generate all relations
of bidegrees $(0,k)$ and $(k,0)$, by the proof of
Proposition~\ref{Ik0I0k}.  The relations in bidegrees $(1,2)$ and
$(2,1)$ generate all relations of bidegrees $(1,k)$ and $(k,1)$, by
Proposition~\ref{propositionI21}.
The relations in bidegree $(2,2)$ generate all the $I_{d,e}$ with $d,e
\geq 2$, by Proposition~\ref{propositionI22}.  The dimensions of all
the $I_{d,e}$ have also been calculated above, in
Proposition~\ref{Ik0I0k} and Corollary~\ref{dimensionsIde}.
\end{proof}

We will also need a result about the relations between the $P_i$ and
$Q_j$ on sums and differences of points on $\AV$.  This follows from
combining Proposition~\ref{Rbar11} with
Theorem~\ref{thmAlgebraicAdditionFormula}.

\begin{theorem}
\label{PQAddition}
There exist polynomials $A_{ij}$, with coefficients in
$\Kbar$, and of multidegree $(1,1,1,1)$ in the four sets of variables
$\{P_{i'}(p)\}, \{Q_{j'}(p)\}, \{P_{k'}(r)\}, \{Q_{\ell'}(r)\}$,
such that
for all $p,r \in \AV$, the $4\times4$ matrices below are
projectively equal:
\begin{equation}
\label{eqnPQAddition}
\Bigl[P_i(p+r) Q_j(p-r)\Bigr]_{i,j}
     = \Bigl[A_{ij}\bigl(\{P_{i'}(p)\}, \{Q_{j'}(p)\},
               \{P_{k'}(r)\}, \{Q_{\ell'}(r)\}\bigr)\Bigr]_{i,j}.
\end{equation}
The essential claim here is that the coefficients of $A_{ij}$ do not
depend on the points $p,r$.  Projective equality of matrices can be
viewed as equality in $\Projective^{15}$, if one enumerates the
entries of each matrix in the same order.
\end{theorem}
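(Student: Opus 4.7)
The plan is to combine the algebraic addition formula Theorem~\ref{thmAlgebraicAdditionFormula} (used with $k=2$) with the basis result Proposition~\ref{Rbar11}. The main idea is a change of variables that converts the arguments $p+r$ and $p-r$ of the $P$s and $Q$s into the sum/difference pair required by Mumford's addition formula.

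Concretely, fix any $\eta \in \AV(\Kbar)$ with $2\eta = D_1$ (an $8$-torsion point; such $\eta$ exists because $[2]: \AV \to \AV$ is surjective), and set $p' = p+\eta$, $q' = r+\eta$, so that $p'+q' = p+r+D_1$ and $p'-q' = p-r$. Since $P_i(p+r) = \theta_{[2],b_i}(p+r+D_1) = \theta_{[2],b_i}(p'+q')$ and $Q_j(p-r) = \theta_{[2],b_j}(p'-q')$, Theorem~\ref{thmAlgebraicAdditionFormula} yields
\begin{equation*}
h_{p',q'}\bigl(P_i(p+r) \tensor Q_j(p-r)\bigr)
= \sum_{c \in B_4 \intersect \AV[2]}
     \theta_{[4],d_i+d_j+c}(p+\eta) \tensor \theta_{[4],d_i-d_j+c}(r+\eta),
\end{equation*}
where $d_i, d_j \in B_4$ are chosen with $2d_i = b_i$, $2d_j = b_j$.

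The second step is to re-express each factor $\theta_{[4],e}(p+\eta)$ on the right. Viewed as a function of $p$, it is a global section of $T_\eta^*\LL^4$; as noted in the proof of Proposition~\ref{Rbar11}, the relation $2\eta = D_1$ yields a fixed global isomorphism $T_\eta^*\LL^4 \isomorphic \MM_{1,1}$, under which Proposition~\ref{Rbar11} gives a unique expansion
\begin{equation*}
\theta_{[4],e}(p+\eta) = \sum_{i',j'} \lambda^{(e)}_{i'j'}\, P_{i'}(p) Q_{j'}(p),
\qquad \lambda^{(e)}_{i'j'} \in \Kbar,
\end{equation*}
with coefficients independent of $p$, and likewise at the point $r$. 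Substituting these expansions into the display above, each entry of the right-hand matrix becomes a sum over $c$ of products of a bidegree $(1,1)$ polynomial in $(P(p), Q(p))$ with a bidegree $(1,1)$ polynomial in $(P(r), Q(r))$, that is, precisely a polynomial $A_{ij}$ of multidegree $(1,1,1,1)$ in the eight variables, with coefficients in $\Kbar$ independent of $p$ and $r$.

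Finally, projective equality of the two $4 \times 4$ matrices in $\Projective^{15}$ follows because the operations applied on the left, namely $h_{p',q'}$ and the line-bundle isomorphism $T_\eta^*\LL^4 \isomorphic \MM_{1,1}$ (tensored with its analog at $r$), are isomorphisms of one-dimensional fibers that are uniform across all $16$ choices of $(i,j)$; consequently they rescale every entry by a common nonzero scalar depending only on $(p,r)$, so the projective point is unchanged. The main obstacle is bookkeeping: one must verify that all the identifications used (the $h_{p',q'}$ of~\eqref{eqHpq}, the pullback along $T_\eta$, and the isomorphism $T_\eta^*\LL^4 \isomorphic \MM_{1,1}$) descend from genuine global morphisms of line bundles on $\AV$ rather than from pointwise choices, so that the resulting common rescaling is truly a single $(p,r)$-dependent scalar and the projective equality is unambiguous.
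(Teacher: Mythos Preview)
Your proposal is correct and follows essentially the same approach as the paper: you introduce an $8$-torsion point $\eta$ with $2\eta = D_1$ (the paper calls it~$q$), apply Theorem~\ref{thmAlgebraicAdditionFormula} with $k=2$ at the shifted arguments $p+\eta$ and $r+\eta$, and then invoke Proposition~\ref{Rbar11} via the identification $T_\eta^*\LL^4 \isomorphic \MM_{1,1}$ to rewrite each $\theta_{[4],e}(\,\cdot\,+\eta)$ as a bidegree $(1,1)$ form in the $P$s and $Q$s. The paper's proof is the same argument, with the minor difference that it also sketches an alternative ``diagonalized'' route through the sections~$F_{d,\alpha}$.
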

\begin{proof}
This proof uses similar ideas to the proof of
Theorem~\ref{Rbar11}.  Once again, let $q \in \AV$ satisfy $2q = D_1$.
In the setting of~\eqref{eqAlgebraicThetaAddition}, we have
\begin{equation}
  \label{eqnPQwithShift}
  \begin{split}
  h_{p+q,r+q}&(\theta_{[2],b_1}(p+r+D_1)
         \tensor \theta_{[2],b_2}(p-r))\\
  &= \sum_{c \in B_2}
      \theta_{[4],d_1+d_2+c}(p+q)
        \tensor \theta_{[4],d_1-d_2+c}(r+q).\\
  \end{split}
\end{equation}
As $b_1$ and $b_2$ vary over all of $B_2$, the left hand side of the
above equation gives the entries of the projective matrix
$[P_i(p+r) Q_j(p-r)]_{i,j}$; as usual, the isomorphism $h_{p+q,r+q}$
respects projective equality.  In the analogous matrix made out of
the right hand side of~\eqref{eqnPQwithShift}, we can view
$\theta_{[4],d_1+d_2+c}(p+q)$ as the value at $p$ of a section of
$T_q^* \LL^4$, just as in the proof of Proposition~\ref{Rbar11}, where
the section in question can be interpreted as an element of
$R_{1,1}$; so we can express each $\theta_{[4],d_1+d_2+c}(p+q)$ (up
to projective equivalence) as a polynomial of bidegree $(1,1)$ in
the variables $\{P_{i'}(p)\}, \{Q_{j'}(p)\}$.  Similarly, we can view
each expression $\theta_{[4],d_1-d_2+c}(r+q)$ as the value at
$r$ of another element of $R_{1,1}$, in other words as a polynomial
of bidegree $(1,1)$ in the variables $\{P_{i'}(r)\}, \{Q_{j'}(r)\}$.
These identifications are all made up to projective equivalence, in
other words up to a common ``constant factor'' that is independent of
any $d$ in $\theta_{[4],d}$.  Each term in the sum on the right, being
a product
$\theta_{[4],d_1+d_2+c}(p+q) \tensor \theta_{[4],d_1-d_2+c}(r+q)$,
can therefore be viewed as a certain polynomial of multidegree
$(1,1,1,1)$ in our four sets of variables.  Adding up all these
polynomials produces the desired $A_{ij}$.

For the diligent reader, here is a sketch of a more
explicit, but messier, proof of this theorem, following an approach
that is similar to that in
Propositions \ref{propositionP2Q2TUnew}
and~\ref{propositionFirstDefABCDEF}.
Each $P_i(p+r)Q_j(p-r)$ can be written as a linear combination
of expressions analogous to the left hand side
of~\eqref{eqBasicAdditionFormula}.  In somewhat loose notation, let us
write these analogous expressions as
$\sum_{c \in B_2} e_4(c,\alpha) P_{2d+c}(p+r) Q_c(p-r)$, which we can
identify (projectively) as the product 
$F_{d,\alpha}(p+q) F_{d,\alpha}(r+q)$.
Specializing to $r=0$ gives a (projective)
identity between
$[\sum_{c \in B_2} e_4(c,\alpha) P_{2d+c}(p) Q_c(p)]_{d,\alpha}$ and
$[\tilde{f}_{d,\alpha}F_{d,\alpha}(p + q)]_{d,\alpha}$; a similar
identity holds when $p=0$.
Here we have identified
the nonvanishing theta constants $F_{d,\alpha}(q)$
with field elements $\tilde{f}_{d,\alpha} \in \Kbarstar$.
Write
$U_{d,\alpha} = \sum_{c \in B_2} e_4(c,\alpha) P_{2d+c} Q_c \in R_{1,1}$.
We then projectively identify the three $16$-tuples
$\Bigl[\sum_{c \in B_2} e_4(c,\alpha) P_{2d+c}(p+r) Q_c(p-r)\Bigr]_{d,\alpha}$,
$\Bigl[F_{d,\alpha}(p+q) \tensor F_{d,\alpha}(r+q)\Bigr]_{d,\alpha}$,
and  
$\Bigl[\tilde{f}_{d,\alpha}^{-2} {U}_{d,\alpha}(p)
  {U}_{d,\alpha}(r) \Bigr]_{d,\alpha}$.
Taking corresponding linear combinations of the entries of the first
and third
$16$-tuples produces the identification in~\eqref{eqnPQAddition}.
\end{proof}

\begin{remark}
\label{remarkI22andUsualQuadrics}
Throughout our discussion, the line bundle $\MM_{1,1}$ and its sections
have figured prominently.  The reason is that if we compose our
embedding $\AV \to \Projective^3 \times \Projective^3$ with the Segre
map $\Projective^3 \times \Projective^3 \hookrightarrow
\Projective^{15}$, this gives precisely the projective embedding of
$\AV$ that is given by $\MM_{1,1}$.  Since this line bundle is
algebraically equivalent to $\LL^4$, the ideal describing the image of
$\AV$ in $\Projective^{15}$ is the usual homogeneous ideal in $16$
variables that is generated by $72$ quadrics.  These quadric
generators correspond to the $36$ basis elements of $I_{2,2}$, combined
with the $36$ quadrics describing the image of the Segre map: these
are $(P_i Q_j)(P_k Q_\ell) - (P_i Q_\ell)(P_k Q_j)$.  It follows that,
in our bigraded ring $R$, our ideal $I$ is in fact the saturation of
the ideal generated by $I_{2,2}$.  This explains the important role
played by $I_{2,2}$ in this section. 
\end{remark}


\section{An approach for an explicit derivation of the Edwards curve}
\label{section3}

Recall the model for the Edwards curve 
in~\cite{BernsteinLangeEC} (generalising the form
given in~\cite{EdwardsNormalForm}):
\begin{equation}\label{EdwardsCurve}
U^2 + Y^2 = 1 + d U^2 Y^2.
\end{equation}
This has a universal group law, provided
that~$d$ is non-square over the ground field. 
In this section, we
present a style of deriving the Edwards curve
and choices for its group law, which make use of 
a $\Projective^1 \times \Projective^1$ embedding, arising from the 
projective $x$-coordinates of a point~$D$ and $D + D_1$, where~$D_1$ is
a fixed point of order~$4$. We shall then imitate this style and
notation when we describe our genus~$2$ approach in the next section.

We first describe a general elliptic curve~$\calC$,
defined over a field~$K$, not of characteristic~$2$, which has
a point~$D_1$ of order~$4$. Then $E_1 = 2 D_1$ will have order~$2$,
and there will be a $2$-isogeny~$\phi$ from~$\calC$ to a curve~$\calC'$
of the form $y^2 = x(x-\alpha)(x-\beta)$.
(Here we have imposed the additional requirement that all the
$2$-torsion points of $\calC'$ are defined over $K$.)
Say that $(\alpha,0) = \phi(D_1)$
so that~$\alpha$ is square. Scale~$\alpha$
to~$1$, take~$\calC'$ to have the form $y^2 = x(x-1)(x-d)$,
and then we may use the standard $2$-isogeny with kernel
$\langle (0,0) \rangle$ from $y^2 = x(x^2 + h_1 x + h_2)$
to $y^2 = x(x^2 - 2 h_1 x + h_1^2 - 4 h_2)$ 
(as given on p.74 of~\cite{SilvermanBook}) to find the curve
\begin{equation}\label{eqnforellipticC}
\calC : y^2 = x(x^2 + 2(1+d)x + (1-d)^2),
\end{equation}
which has $D_1 = (1-d, 2(1-d))$ of order~$4$, and $E_1 = 2D_1 = (0,0)$
of order~$2$.

For any point~$D$ on~$\calC$, we let
$[k_1, k_2] = \bigl[ k_1(D), k_2(D) \bigr]
  \in \Projective^1$ denote the projective
$x$-coordinate, so that $x(D) = k_2(D)/k_1(D)$.
Then addition by~$E_1$ induces $[k_1,k_2] \mapsto [ k_2, (1-d)^2 k_1 ]$;
this can be described by the projective linear transformation
\begin{equation}\label{eqnaddE1elliptic}
\begin{pmatrix}
k_1\\
k_2\\
\end{pmatrix}
\mapsto
\begin{pmatrix}
0   & 1 \\
(1-d)^2 & 0 \\
\end{pmatrix}
\begin{pmatrix}
k_1\\
k_2\\
\end{pmatrix},
\end{equation}
and this matrix has eigenvalues $1-d$ and $d-1$ with eigenvectors
$\Bigl( \genfrac{}{}{0pt}{}{1}{1-d} \Bigr)$ 
and $\Bigl( \genfrac{}{}{0pt}{}{-1}{1-d} \Bigr)$, respectively.
We can perform a change of basis which diagonalises the
effect of addition by~$E_1$, namely:
\begin{equation}\label{diagelliptic1}
\begin{pmatrix}
k_1\\
k_2\\
\end{pmatrix}
=
\begin{pmatrix}
1 & -1 \\
1-d &  1-d \\
\end{pmatrix}
\begin{pmatrix}
l_1\\
l_2\\
\end{pmatrix}.
\end{equation}
Now let~$E_0$ denote the identity element, let~$E_1$ be as above,
and let $E_2,E_3$ be the other points of order~$2$ on~$\calC$.
For any point~$D$ on~$\calC$, let $l(D)$ denote
$[l_1(D), l_2(D)] \in \Projective^1$.
Then: $l(E_0) = [1,1]$, $l(E_1) = [1,-1]$, $l(E_2) = [-\sqrt{d},1]$
and $l(E_3) = [\sqrt{d},1]$. Addition by these points
maps
$[l_1,l_2]$
to, respectively: $[l_1,l_2]$, $[l_1,-l_2]$, $[-\sqrt{d}\ l_2, l_1]$
and $[\sqrt{d}\ l_2, l_1]$. Further, if~$D_1$ is the point of order~$4$
given above, then $l(D_1) = [1,0]$.

Recall the standard result (see, for example, Definition~2.1 
in~\cite{FlynnHeights}) that, 
for points $D,E$ on~$\calC$,
the $2\times 2$ matrix
$\bigl( k_i(D+E) k_j(D-E) + k_j(D+E) k_i(D-E) \bigr)$ is
projectively equal to a $2\times 2$ matrix of forms
which are biquadratic in $[ k_1(D), k_2(D) ]$ and $[ k_1(E), k_2(E) ]$.
If we perform the linear change in coordinates to the $l$-coordinates,
we see that these have a particularly simple form. Indeed,
if we let $[u_1,u_2] = [l_1(D),l_2(D)]$ and $[v_1,v_2] = [l_1(E),l_2(E)]$
and define the $B_{ij} = B_{ij}( [u_1,u_2], [v_1,v_2] )$ by:
\begin{equation}\label{ellipticbiquad}
\begin{split}
\Bigl( B_{ij} \Bigr) = 
\begin{pmatrix}
-d u_1^2 v_2^2 - d u_2^2 v_1^2 + d u_2^2 v_2^2 + u_1^2 v_1^2&
(1-d) u_1 u_2 v_1 v_2\\
(1-d) u_1 u_2 v_1 v_2&
-d u_2^2 v_2^2 - u_1^2 v_1^2 + u_1^2 v_2^2 + u_2^2 v_1^2
\end{pmatrix}
\end{split}
\end{equation}
then the $2 \times 2$ matrices $\bigl( B_{ij} \bigr)$
and $\bigl( l_i(D+E) l_j(D-E) + l_j(D+E) l_i(D-E) \bigr)$
are projectively equal.

We now embed our elliptic curve~$\calC$ into 
$\Projective^1 \times \Projective^1$, using the embedding:
\begin{equation}\label{ellipticP1xP1}
D \mapsto
\Bigl( [u_1,u_2], [y_1,y_2] \Bigr)
= \Bigl( \bigl[ l_1(D),l_2(D) \bigr], 
         \bigl[ l_1(D+D_1),l_2(D+D_1) \bigr] \Bigr).
\end{equation}

Since $l(D_1) = [1,0]$, we can substitute $v_1=1, v_2=0$
into~(\ref{ellipticbiquad}) to see that
the matrix
$\bigl( l_i(D+D_1) l_j(D-D_1) + l_j(D+D_1) l_i(D-D_1) \bigr)$
is
projectively
the same as
\begin{equation}\label{ellipticbiquadD1}
\begin{split}
\begin{pmatrix}
- d u_2^2 + u_1^2&
0\\
0&
- u_1^2 + u_2^2
\end{pmatrix}.
\end{split}
\end{equation}
Furthermore, $\bigl( l_i(D+D_1) l_j(D-D_1) + l_j(D+D_1) l_i(D-D_1) \bigr)$
is the same as 
$\bigl( l_i(D+D_1) l_j(D+D_1+E_1) + l_j(D+D_1) l_i(D+D_1+E_1) \bigr)$.
If we use that $y_i = l_i(D+D_1)$ and the fact that addition by~$E_1$
induces
$[l_1,l_2] \mapsto [l_1,-l_2]$, we see that our matrix is
also projectively equal to
\begin{equation}\label{ellipticbiquadD1var}
\begin{pmatrix}
2 y_1^2 &
0\\
0&
-2 y_2^2
\end{pmatrix}.
\end{equation}
Since~(\ref{ellipticbiquadD1}) and (\ref{ellipticbiquadD1var})
are projectively equal, we see that
\begin{equation}\label{edwardsproj}
(- d u_2^2 + u_1^2) (-y_2^2) = (- u_1^2 + u_2^2) y_1^2.
\end{equation}
If we now define the affine variables $U = u_2/u_1$
and $Y = y_2/y_1$ then $-(- d U^2 + 1) Y^2 = - 1 + U^2$,
giving $U^2 + Y^2 = 1 + d U^2 Y^2$, which is the equation
of the Edwards curve.

By an analog of Theorem~\ref{PQAddition}, the matrix
$\bigl( l_i(D+E) l_j(D-E+D_1) \bigr)_{i,j}$ is projectively the same as a
matrix whose entries are linear combinations of the terms of the form
$l_{i_1}(D) l_{i_2}(E) l_{i_3}(D+D_1) l_{i_4}(E+D_1)$.
If we let, for $i \in \{ 1,2 \}$,
\begin{equation}\label{ellipticuyvz}
u_i = l_i(D),\ 
y_i = l_i(D+D_1),\
v_i = l_i(E),\
z_i = l_i(E+D_1),
\end{equation}
then we may regard $\bigl( [u_1,u_2], [y_1,y_2] \bigr)$
and $\bigl( [v_1,v_2], [z_1,z_2] \bigr)$ as two arbitrary points
on the $\Projective^1 \times \Projective^1$ embedding of our curve.
We thus know that there is a matrix~$\bigl( A_{ij} \bigr)$,
where each 
$A_{ij} = A_{ij}\bigl( ([u_1,u_2],[y_1,y_2]),([v_1,v_2],[z_1,z_2]) \bigr)$
is a linear combination of terms of the
form $u_{i_1} v_{i_2} y_{i_3} z_{i_4}$, with the property
that $\bigl( A_{ij} \bigr) = \bigl( l_i(D+E) l_j(D-E+D_1) \bigr)$.
Each~$A_{ij}$ has~$16$ coefficients, and so there are~$64$
coefficients to be found. These can be determined just from
the linear equations in the coefficients arising from the cases when:
(i) $D$ is general and $E$ is any of the $2$-torsion points,
(ii) $E$ is general and $D$ is any of the $2$-torsion points,
and (iii) $D = E = D_1$. This gives
\begin{equation}\label{Aelliptic}
\Bigl( A_{ij} \Bigr)
=
\begin{pmatrix}
 u_1 v_1 y_1 z_1 - d u_2 v_2 y_2 z_2 &
 u_1 v_2 y_2 z_1 - u_2 v_1 y_1 z_2 \\
 -u_1 v_1 y_2 z_2 + u_2 v_2 y_1 z_1 &
 -u_1 v_2 y_1 z_2 + u_2 v_1 y_2 z_1
\end{pmatrix}.
\end{equation}
We see that any column of $\bigl( A_{ij} \bigr)$ gives 
the $u$-coordinates of~$D+E$.

There should also be a matrix
$\bigl( J_{ij} \bigr) = \bigl( l_i(D+E+D_1) l_j(D-E) \bigr)$,
and indeed we see that
\begin{equation}\label{ellipticJA}
\begin{split}
&J_{ij}\bigl( ([u_1,u_2],[y_1,y_2]),([v_1,v_2],[z_1,z_2]) \bigr)\\
&= 
J_{ij}\bigl(([l_1(D),l_2(D)],[l_1(D+D_1),l_2(D+D_1)]),\\
&\ \ \ \ \ \ \ \ \ \ ([l_1(E),l_2(E)],[l_1(E+D_1),l_2(E+D_1)])\bigr)\\
 &= l_i(D+E+D_1) l_j(D-E)\\
 &= l_j(D-E) l_i(D+E+D_1)\\
 &= 
A_{ji}\bigl( ([l_1(D),l_2(D)],[l_1(D+D_1),l_2(D+D_1)]),\\
&\ \ \ \ \ \ \ \ \ \ ([l_1(-E),l_2(-E)],[l_1(-E+D_1),l_2(-E+D_1)])\bigr)\\
 &= 
A_{ji}\bigl(([l_1(D),l_2(D)],[l_1(D+D_1),l_2(D+D_1)]),\\
&\ \ \ \ \ \ \ \ \ \ ([l_1(E),l_2(E)],[l_1(E-D_1),l_2(E-D_1)])\bigr)\\
 &= 
A_{ji}\bigl(([l_1(D),l_2(D)],[l_1(D+D_1),l_2(D+D_1)]),\\
&\ \ \ \ \ \ \ \ \ \    
([l_1(E),l_2(E)],[l_1(E+E_1+D_1),l_2(E+E_1+D_1)])\bigr)\\
 &= 
A_{ji}\bigl(([l_1(D),l_2(D)],[l_1(D+D_1),l_2(D+D_1)]),\\
&\ \ \ \ \ \ \ \ \ \ ([l_1(E),l_2(E)],[l_1(E+D_1),-l_2(E+D_1)])\bigr)\\
&= A_{ji}\bigl(([u_1,u_2],[y_1,y_2]),([v_1,v_2],[z_1,-z_2])\bigr).\\
\end{split}
\end{equation}
So, if we define
\begin{equation}\label{ellipticJdefn}
J_{ij}\bigl(([u_1,u_2],[y_1,y_2]),([v_1,v_2],[z_1,z_2])\bigr)
	= A_{ji}\bigl(([u_1,u_2],[y_1,y_2]),([v_1,v_2],[z_1,-z_2])\bigr)
\end{equation}
this gives
\begin{equation}\label{Jelliptic}
\Bigl( J_{ij} \Bigr)
=
\begin{pmatrix}
 u_1 v_1 y_1 z_1 + d u_2 v_2 y_2 z_2 &
 u_1 v_1 y_2 z_2 + u_2 v_2 y_1 z_1 \\
 u_1 v_2 y_2 z_1 + u_2 v_1 y_1 z_2 &
 u_1 v_2 y_1 z_2 + u_2 v_1 y_2 z_1
\end{pmatrix}.
\end{equation}
We see that $\bigl( J_{ij} \bigr) = \bigl( l_i(D+E+D_1) l_j(D-E) \bigr)$
and that any column of $\bigl( J_{ij} \bigr)$ gives 
the $y$-coordinates of~$D+E$.

We now have a description of the group law for our 
$\Projective^1 \times \Projective^1$ embedding; namely,
$D+E$ is given by any column of $\bigl( A_{ij} \bigr)$ 
together with any column of $\bigl( J_{ij} \bigr)$.
If we write our original points in affine coordinates
$(U,Y)$ and $(V,Z)$, where $U = u_2/u_1$, $Y = y_2/y_1$,
$V = v_2/v_1$ and $Z = z_2/z_1$, then the sum could be
given by any of
\begin{equation}\label{ellipticsum}
\begin{split}
\Bigl( A_{21}/A_{11}, J_{21}/J_{11} \Bigr)
&= \Bigl( (- Y Z + U V)/(1 - d U V Y Z), (V Y + U Z)/(1 + d U V Y Z) \Bigr),\\
\Bigl( A_{21}/A_{11}, J_{22}/J_{12} \Bigr)
&= \Bigl( (- Y Z + U V)/(1 - d U V Y Z), (V Z + U Y)/(Y Z + U V) \Bigr),\\
\Bigl( A_{22}/A_{12}, J_{21}/J_{11} \Bigr)
&= \Bigl( (- V Z + U Y)/(V Y - U Z), (V Y + U Z)/(1 + d U V Y Z) \Bigr),\\
\Bigl( A_{22}/A_{11}, J_{22}/J_{12} \Bigr)
&= \Bigl( (- V Z + U Y)/(V Y - U Z), (V Z + U Y)/(Y Z + U V) \Bigr),\\
\end{split}
\end{equation}
which the same as the group law in~\cite{BernsteinLangeEC}, after
taking account of the fact that we are taking~$(1,0)$ as the
identity, whereas~$(0,1)$ is taken to be the identity element
in~\cite{BernsteinLangeEC}. We can think of the columns of the
matrices $\bigl( A_{ij} \bigr)$ and $\bigl( J_{ij} \bigr)$
as giving all variations of the group law. 

We finally note that, if $A_{11} = A_{21} = 0$ then
\begin{equation}\label{ellipticnondegen}
y_1 z_1 A_{11} + d y_2 z_2 A_{21}
= u_1 v_1 (y_1^2 z_1^2 - d y_2^2 z_2^2 ) = 0.
\end{equation}
Suppose that~$d$ is non-square over the ground field~$K$.
Since $[u_1,u_2],[y_1,y_2] \in \Projective^1$ and
satisfy~(\ref{edwardsproj}), it follows that~$u_1$ and~$y_1$
are nonzero (which also has the consequence that the
the affine coordinate~$U = u_2/u_1$ is always well defined,
and so the above affine variety has no points at infinity). 
Similarly,~$v_1$ and~$z_1$ are nonzero.
This is inconsistent with~(\ref{ellipticnondegen}), and
so we see that, provided~$d$ is non-square, we can never
have $A_{11} = A_{21} = 0$; similarly, we can never have
any of $A_{12} = A_{22} = 0$, $J_{11} = J_{21} = 0$
or $J_{12} = J_{22} = 0$, and so there are no exceptional
cases for any of the above versions of the group law.

We can see how the existence of an always nonzero coordinate
and the existence of a universal group law follow from
the fact that, on our original elliptic curve~(\ref{eqnforellipticC}),
our condition that~$d$ is non-square forces the point
$( d-1, 2(d-1)\sqrt{d} )$ to be not defined over the ground field;
this in turn forces~$u_1$ to be always nonzero for all points
$\bigl( [u_1,u_2], [y_1,y_2] \bigr)$ defined over the ground field;
that is to say, $l_1(D)$ is nonzero for any~$D$ defined over
the ground field. For any two points $D,E$, 
since $\bigl( A_{ij} \bigr) = \bigl( l_i(D+E) l_j(D-E+D_1) \bigr)$
it follows that
$\bigl[ A_{11}, A_{21} \bigr]
= \bigl[ l_1(D+E) l_1(D-E+D_1), l_2(D+E) l_1(D-E+D_1) \bigr]
 = [l_1(D+E), l_2(D+E)]$
cannot have both entries~$0$, since $l_1(D-E+D_1)$ is nonzero.
Similarly $J_{11},J_{21}$ are not both zero.
Yet another interpretation is to note that the intersection
of our elliptic curve with the condition~$u_1 = 0$
is by~\eqref{edwardsproj} the pair of points
$\{([0,1],[1,\pm\sqrt{d}])\} \subset \Projective^1 \times \Projective^1$.
Our condition on $d$ ensures
that this pair of points is collectively but not individually
defined over the ground field. This serves to ensure that
each of $[A_{11},A_{21}]$ and $[J_{11},J_{21}]$ is a well defined projective
point with a nonzero coordinate, and we have a universal group law.

Of course, even if~$d$ is square, our matrices~$A,J$
in ~(\ref{Aelliptic}) and~(\ref{Jelliptic}) still give
a description of the group law which covers all possibilities;
it is merely that there will not be a specified column
which covers all possibilities; rather, one will need to
use one column for some cases and another column for
others, or more generally one can use a linear combination of columns, 
which gives the same projective point.

We note in passing that, since negation has the effect:
$\bigl( [u_1,u_2], [y_1,y_2] \bigr) \mapsto
\bigl( [u_1,u_2], [y_1,-y_2] \bigr)$, we may replace~$y_2$
with~$\sqrt{\kk} y_2$, for any nonsquare $\kk \in K$, and the
result will still be defined over~$K$. This is due to the
fact that the nontrivial Galois action $\sqrt{\kk} \mapsto -\sqrt{\kk}$
has the same effect as negation, so that the variety is taken
to itself under this action. After replacing~$y_2$ with~$\sqrt{\kk} y_2$, 
we obtain the affine model $U^2 + \kk Y^2 = 1 + d \kk U^2 Y^2$, which
is a quadratic twist of the original curve, and is birationally
equivalent over~$K$ to the twisted Edwards curve, described
in~\cite{BBJLPtwisted}.

\section{An analog for Jacobians of genus 2 curves}
\label{section4}

In this section, we shall derive our $\Projective^3 \times \Projective^3$
embedding of the Jacobian variety of a genus~$2$ curve,
giving explicitly a set of defining equations for the variety
in Theorem~\ref{theoremdefeqns}
and its group law in Theorem~\ref{theoremGroupLawGenus2}.

\subsection*{The standard embedding of the Kummer surface.}
We shall first describe a standard embedding of the Kummer surface,
which will be given in~(\ref{kummercoords}).
For a general curve of genus~$2$
\begin{equation}\label{genus2general}
y^2 = f_6 x^6 + f_5 x^5 + f_4 x^4 + f_3 x^3 + f_2 x^2 + f_1 x + f_0,
\end{equation}
defined over a ground field~$K$ (not of characteristic~$2$).
we may represent elements of the Jacobian variety by
$\{ (x_1,y_1), (x_2,y_2) \}$, as a shorthand for the divisor
class of $(x_1,y_1) + (x_2,y_2) - \infty^+ - \infty^-$,
where~$\infty~^+$ and~$\infty^-$ denote the points on the
non-singular curve that lie over the singular point at infinity.
The role of the projective $x$-coordinate in the previous
section will be performed by the Kummer surface, which has
an embedding (see p.18 of~\cite{CasselsFlynnBook})
in $\Projective^3$ given by $[k_1,k_2,k_3,k_4]$, where 
\begin{equation}\label{kummercoords}
k_1 = 1,\  k_2 = x_1 + x_2 ,\  k_3 = x_1 x_2,\
k_4 = (F_0 (x_1 ,x_2 )-2y_1 y_2 )/(x_1 -x_2 )^2,
\end{equation}
\noindent and where
\begin{equation}\label{F0defn}
\begin{split}
F_0 (x_1 ,x_2 )= &2 f_0 + f_1 (x_1 +x_2 ) + 2 f_2 (x_1 x_2 )
+ f_3 (x_1 x_2) (x_1 +x_2 ) \\
&+ 2 f_4 (x_1 x_2 )^2 + f_5 (x_1 x_2 )^2 (x_1 + x_2 )
+2 f_6 (x_1 x_2 )^3.
\end{split}
\end{equation}
The defining equation of the Kummer surface is given by
\begin {equation}\label {kummerequation}
 R(k_1,k_2,k_3) k_4^2 + S(k_1,k_2,k_3) k_4 + T(k_1,k_2,k_3) = 0,
\end {equation}
where $R$, $S$, $T$ are given by:
\begin{eqnarray*}
R(k_1,k_2,k_3) &=& k_2^2 - 4 k_1 k_3, \\
S(k_1,k_2,k_3) &=&
-2 \left( 2 k_1^3 f_0 + k_1^2 k_2 f_1 + 2 k_1^2 k_3 f_2 + k_1 k_2 k_3 f_3
+ 2 k_1 k_3^2 f_4  \right. \\
  &~& \left. + k_2 k_3^2 f_5 + 2 k_3^3 f_6 \right), \\
T(k_1,k_2,k_3) &=&
- 4 k_1^4 f_0 f_2 + k_1^4 f_1^2 - 4 k_1^3 k_2 f_0 f_3 - 2 k_1^3 k_3 f_1 f_3
- 4 k_1^2 k_2^2 f_0 f_4 \\
&~& + 4 k_1^2 k_2 k_3 f_0 f_5 - 4 k_1^2 k_2 k_3 f_1 f_4 - 4 k_1^2 k_3^2 f_0 f_6
+ 2 k_1^2 k_3^2 f_1 f_5 \\
&~& - 4 k_1^2 k_3^2 f_2 f_4  + k_1^2 k_3^2 f_3^2 - 4 k_1 k_2^3 f_0 f_5
+ 8 k_1 k_2^2 k_3 f_0 f_6 - 4 k_2^4 f_0 f_6 \\
&~& - 4 k_1 k_2^2 k_3 f_1 f_5 + 4 k_1 k_2 k_3^2 f_1 f_6
- 4 k_1 k_2 k_3^2 f_2 f_5 - 2 k_1 k_3^3 f_3 f_5 \\
&~& - 4 k_2^3 k_3 f_1 f_6 -4 k_2^2 k_3^2 f_2 f_6
-4 k_2 k_3^3 f_3 f_6 - 4 k_3^4 f_4 f_6 + k_3^4 f_5^2.
\end{eqnarray*}
Our aim now is to follow the style of the previous section and derive
a $\Projective^3 \times \Projective^3$ embedding of the Jacobian variety, 
arising from the images on the Kummer surface of the point~$D$ 
(on the Jacobian) and $D + D_1$, where~$D_1$ is
a fixed point of order~$4$. 

\subsection*{Forcing the existence of two independent points of order~4.} 
We shall next describe a general genus~$2$ curve~$\calC$ whose Jacobian has
independent points~$D_1, D_2$ of order~$4$; the model for such a curve
wll be given in~(\ref{eqnforgenus2C}). Then $E_1 = 2 D_1$ 
and $E_2 = 2 D_2$ will have order~$2$. We shall also insist
that $\langle E_1, E_2 \rangle$ is a maximal isotropic subgroup
which is the kernel of a Richelot isogeny, as described on p.89
of~\cite{CasselsFlynnBook}. These requirements force the Jacobian
to be isogenous to the Jacobian of a curve
$\calC'$ with full $2$-torsion.
After a linear change in variable, the curve $\calC'$ is given by
$y^2 = x (x-1) (x-\alpha) (x-\beta) (x-\gamma)$,
which is $y^2 = H_1(x) H_2(x) H_3(x)$, where
$H_1(x) = x$, $H_2(x) = (x-1)(x-\alpha)$ and $H_3(x) = (x-\beta)(x-\gamma)$.
Suppose that the kernel of the dual isogeny consists
of the identity, $\{ \infty, (0,0) \}$, $\{ (1,0), (\alpha,0) \}$
and $\{ (\beta,0), (\gamma, 0) \}$. The points $D_1, D_2$
of order~$4$ on
the Jacobian of~$\calC$ must map to points of order~$2$ outside
the above mentioned kernel;
say that $D_1$ and $D_2$ map respectively to $\{ (1,0), \infty \}$ and
$\{(\beta,0),\infty\}$. 
As an aside, 
we note that $\{ (1,0), \infty \}$ and $\{(\beta,0),\infty\}$ have a
nontrivial Weil pairing, so the original points $D_1,D_2$ do not
generate an isotropic subgroup of the $4$-torsion in the Jacobian of
$\calC$, even though their doubles $E_1, E_2$ do generate an isotropic
subgroup of the $2$-torsion.


From the standard maps on p.106 of~\cite{CasselsFlynnBook},
the rationality of $D_1$ and $D_2$
forces all of $1$, $(1-\beta)(1-\gamma)$, $\beta$ and
$(\beta-1)(\beta-\alpha)$ to be squares. This is solved by:
\begin{equation}\label{alphabetagammasoln}
\alpha = \ua^2 + b^2 - \ua^2 b^2,\ \beta = b^2,\ 
\gamma = b^2 \uc^2 - \uc^2 + 1.
\end{equation}
We shall now increase the generality by only requiring that
$E_1,E_2,D_1$ are defined over the ground field, but not necessarily~$D_2$.
We let $a = \ua^2$, $c = \uc^2$, so that
\begin{equation}\label{alphabetagammasolnvar}
\alpha = a + b^2 - a b^2,\ \beta = b^2,\ \gamma = b^2 c - c + 1,
\end{equation}
where $a,b,c$ are arbitrary members of some ground field~$K$,
which is not of characteristic~$2$.
If we define the $h_{ij}$ by $H_j = H_j(x) = h_{j2} x^2 + h_{j1} x + h_{j0}$,
then we recall that the general formula for the isogenous curve~$\calC$
is
\begin{equation}\label{richeloteqn}
y^2 = \Delta \bigl( H_2' H_3 - H_2 H_3' \bigr)
             \bigl( H_3' H_1 - H_3 H_1' \bigr)
             \bigl( H_1' H_2 - H_1 H_2' \bigr),
\end{equation}
where $\Delta = \hbox{det} ( h_{ij} )$. If we apply this to our
specific $H_1,H_2,H_3$, we see that our original curve (after
absorbing the factor $(b-1)^2 (b+1)^2/ac$ into $y^2$ by a quadratic
twist of the $y$-coordinate) has the form
\begin{equation}\label{eqnforgenus2C}
\calC : y^2 =  g\bigl( (f+1) x^2 - 2 g x + b^2 f - d e \bigr)
               \bigl( x^2 - b^2 d \bigr)
               \bigl( x^2 + e \bigr),
\end{equation}
\noindent where
\begin{equation}\label{defndefg}
\begin{split}
d &= b^2 c - c + 1,\\
e &= a b^2 - a - b^2,\\
f &= a + c - 1,\\
g &= b^2 c + a.
\end{split}
\end{equation}
We
require
that the discriminant of~$\calC$ is nonzero, which
is equivalent to the condition that
$2abcdefg(a-1)(b^2-1)(c-1)$ is nonzero.
Let $\J = \hbox{Jac}(\calC)$, the Jacobian of~$\calC$.
Let~$E_0$ be the identity element in~$\J$, and let
$E_1 = \{ (g + (b^2-1) \sqrt{acf})/(a+c), 0),
          (g - (b^2-1) \sqrt{acf})/(a+c), 0) \}$,
$E_2 = \{ (b \sqrt{d}, 0), (-b\sqrt{d},0) \}$
and $E_3 = \{ (\sqrt{-e},0), (-\sqrt{-e},0) \}$
be the points of order~$2$ on~$\J$ corresponding to the three 
quadratic factors given on the right hand side of~(\ref{eqnforgenus2C}).
These are all defined over the ground field~$K$.
For any $D \in \J$, let $k(D) = [k_1(D),k_2(D),k_3(D),k_4(D)]$
denote the image of~$D$ in the above embedding of the Kummer surface.
Let~$D_1,D_2,D_3$ be points of order~$4$ such that 
$2 D_1 = E_1$, $2 D_2 = E_2$ and $2 D_3 = E_3$ (chosen so
that $D_3 = D_1 + D_2$).
It can be checked directly (see the file~\cite{FlynnMaple}) that~$D_1$
is defined over the ground field~$K$. Furthermore, $D_2,D_3$ 
are defined over $K(\sqrt{a},\sqrt{c})$.

\subsection*{Diagonalising addition by the order two elements $E_1,E_1$.}
We know (see p.22 of~\cite{CasselsFlynnBook})
that addition by any point of order~$2$ gives a linear map
on the Kummer surface. We simultaneously diagonalise addition
by~$E_1$ and~$E_2$ (as described in the file~\cite{FlynnMaple})
using the following linear change of basis for the Kummer surface.
\begin{equation}\label{kummerchangebasis}
\begin{pmatrix}
l_1\\
l_2\\
l_3\\
l_4
\end{pmatrix}
=
Q
\begin{pmatrix}
k_1\\
k_2\\
k_3\\
k_4
\end{pmatrix}
\end{equation}
\noindent where $Q = (Q_{ij})$ is the $4\times 4$ matrix with entries:
\begin{equation}\label{entriesofQ}
\begin{split}
 Q_{11} &= 2 g b^2 e (b^4 c^2-2 b^2 c^2+2 b^2 c+c^2+a-c),\\
 Q_{12} &= -2 g^2 b^2 e,\\
 Q_{13} &= 2 g (a^2 b^2+a b^2 c-b^4 c-a^2-2 a b^2-a c+a),\\
 Q_{14} &= 1,\\
 Q_{21} &= -2 g d (a^2 b^4-2 a^2 b^2-a b^4+b^4 c+a^2+2 a b^2),\\
 Q_{22} &= 2 g^2 d,\\
 Q_{23} &=
   -2 g (a b^4 c+b^4 c^2-a b^2 c-b^4 c-b^2 c^2+2 b^2 c+a),\\
 Q_{24} &= 1,\\
 Q_{31} &= 2 g b^2 (a^2 b^4 c+a b^4 c^2-2 a^2 b^2 c-2 a b^4 c
    -2 a b^2 c^2\\
        &\ \ \ \ \ \ \ \ \ \ \
         +a^2 c+4 a b^2 c+a c^2+b^4 c-2 a c+a),\\
 Q_{32} &= -2 b^2 g^2,\\
 Q_{33} &= 2 g (a b^4 c-2 a b^2 c+a b^2+a c+b^2 c),\\
 Q_{34} &= -1,\\
 Q_{41} &= -2 g d e (b^4 c+a),\\
 Q_{42} &= 2 g^2 d e,\\
 Q_{43} &= -2 g (-b^4 c^2+a^2 b^2+b^2 c^2-a^2-a b^2-b^2 c),\\
 Q_{44} &= -1.
\end{split}
\end{equation}
For any~$D\in \J$, we let 
$l(D) = [ l_1(D), l_2(D), l_3(D), l_4(D) ]$.
We first note that, after this linear change in coordinates,
the equation of the Kummer surface is considerably simplified:
\begin{equation}\label{kummereqnl}
\begin{split}
&\bigl( b g (a c l_1 l_2-f l_3 l_4) \bigr)^2 =\\
&\  \bigl( b^2 (a c (f l_1^2-e l_2^2)+c e f l_3^2-a f l_4^2) \bigr)
    \bigl( a c (d l_1^2+b^2 f l_2^2)-f (a d l_3^2+b^2 c l_4^2) \bigr).
\end{split}
\end{equation}
As above, we let~$E_0$ denote the identity element, and 
let~$E_1,E_2,E_3$ be the points of order~$2$ above.
Then: $l(E_0) = [1, 1, -1, -1]$, $l(E_1) = [1, 1, 1, 1]$, 
$l(E_2) = [1, -1, -1, 1]$ and and $l(E_3) = [1, -1, 1, -1]$. 
Addition by these points
maps
a general $[l_1,l_2,l_3,l_4]$
to, respectively: $[l_1,l_2,l_3,l_4]$, 
$[l_1,l_2,-l_3,-l_4]$, $[l_1,-l_2,l_3,-l_4]$
and $[l_1,-l_2,-l_3,l_4]$. Further, if~$D_1,D_2,D_3$ are the
points of order~$4$ given above, 
then $l(D_1) = [b, 1, 0, 0]$, $l(D_2) = [0, 1, 0, -\sqrt{a}]$
and $l(D_3) = [1, 0, 0, \sqrt{c}]$.

\subsection*{Simplified biquadratic forms on the Kummer surface.}
There is a result on Jacobians of genus two curves analogous to that
mentioned previously for elliptic curves (see Theorem~3.4.1
of~\cite{CasselsFlynnBook}) that, for points $D,E$ on~$\J$,
the $4\times 4$ matrix
$\bigl( k_i(D+E) k_j(D-E) + k_j(D+E) k_i(D-E) \bigr)$ is
projectively equal to a $4\times 4$ matrix of forms
which are biquadratic in $[ k_1(D), k_2(D), k_3(D), k_4(D) ]$ 
and $[ k_1(E), k_2(E), k_3(E), k_4(E) ]$.
If we perform the linear change in coordinates to the $l$-coordinates,
we see that these have a simpler form. 

Indeed, if we let $[u_1,u_2,u_3,u_4] = [l_1(D),l_2(D),l_3(D),l_4(D)]$ 
and $[v_1,v_2,v_3,v_4] = [l_1(E),l_2(E),l_3(E),l_4(E)]$
then there are biquadratic forms 
\begin{equation}\label{genus2biquad}
B_{ij} = B_{ij}\bigl( [u_1,u_2,u_3,u_4], [v_1,v_2,v_3,v_4] \bigr)
\end{equation}
such that the $4 \times 4$ matrices $\bigl( B_{ij} \bigr)$
and $\bigl( l_i(D+E) l_j(D-E) + l_j(D+E) l_i(D-E) \bigr)$
are projectively equal. The~$B_{ij}$ are derived and given
explicitly in the file~\cite{FlynnMaple}.

\subsection*{Embedding the Jacobian into $\Projective^3 \times \Projective^3$.}
We shall now embed our Jacobian~$\J$ into 
$\Projective^3 \times \Projective^3$, using the embedding:
\begin{equation}\label{genus2P3xP3}
\begin{split}
& D \mapsto \Bigl( [u_1,u_2,u_3,u_4], [y_1,y_2,y_3,y_4] \Bigr)\\
&= \Bigl( l(D), l(D+D_1) \Bigr)\\
&= \Bigl( [ l_1(D),l_2(D),l_3(D),l_4(D) ], 
	  [ l_1(D+D_1),l_2(D+D_1),l_3(D+D_1),l_4(D+D_1) ] \Bigr).
\end{split}
\end{equation}
Our aim for the rest of this section will be to describe
the defining equations and group law for this embedding.

We note that there are the following linear maps on this embedding
\begin{equation}\label{mapsonembedding}
\begin{split}
& D \mapsto -D \\
& : \bigl( [u_1,u_2,u_3,u_4], [y_1,y_2,y_3,y_4] \bigr)
\mapsto \bigl( [u_1,u_2,u_3,u_4], [y_1,y_2,-y_3,-y_4] \bigr),\\
& D \mapsto D + E_1\\
& : \bigl( [u_1,u_2,u_3,u_4], [y_1,y_2,y_3,y_4] \bigr)
\mapsto \bigl( [u_1,u_2,-u_3,-u_4], [y_1,y_2,-y_3,-y_4] \bigr),\\
& D \mapsto D + E_2\\
& : \bigl( [u_1,u_2,u_3,u_4], [y_1,y_2,y_3,y_4] \bigr)
\mapsto \bigl( [u_1,-u_2,u_3,-u_4], [y_1,-y_2,y_3,-y_4] \bigr),\\
& D \mapsto D + E_3\\
& : \bigl( [u_1,u_2,u_3,u_4], [ y_1,y_2,y_3,y_4] \bigr)
\mapsto \bigl( [u_1,-u_2,-u_3,u_4], [ y_1,-y_2,-y_3,y_4] \bigr),\\
& D \mapsto D + D_1\\
& : \bigl( [u_1,u_2,u_3,u_4], [y_1,y_2,y_3,y_4] \bigr)
\mapsto \bigl( [y_1,y_2,y_3,y_4], [u_1,u_2,-u_3,-u_4] \bigr),\\
& D \mapsto D - D_1\\
& : \bigl( [u_1,u_2,u_3,u_4], [ y_1,y_2,y_3,y_4] \bigr)
\mapsto \bigl( [ y_1,y_2,-y_3,-y_4], [u_1,u_2,u_3,u_4] \bigr),\\
\end{split}
\end{equation}
from which it follows that
\begin{equation}\label{swapmap}
\begin{split}
& D \mapsto - D - D_1\\
& : \bigl( [u_1,u_2,u_3,u_4], [y_1,y_2,y_3,y_4] \bigr)
\mapsto \bigl( [y_1,y_2,y_3,y_4], [u_1,u_2,u_3,u_4] \bigr),\\
\end{split}
\end{equation}
which swaps the~$u_i$ and~$y_i$. The above mappings generate a group
of mappings which is isomorphic to~$C_2 \times D_8$.

\subsection*{Using the biquadratic forms to obtain some of
the defining equations.}
Since $l(D_1) = [b, 1, 0, 0]$, we can substitute $v_1=b, v_2=1, v_3=0, v_4=0$
into the~$B_{ij}$,
to give $M = \bigl( l_i(D+D_1) l_j(D-D_1) + l_j(D+D_1) l_i(D-D_1) \bigr)$,
when we find that (as derived in the file~\cite{FlynnMaple}) 
\begin{equation}\label{genus2biquadD1}
\begin{split}
\bigl[ M_{1,1}, & M_{1,2}, M_{2,2}, M_{3,3}, M_{3,4}, M_{4,4} \bigr]\\
= \bigl[ &b^2 (a c (-f u_1^2+e u_2^2)-c e f u_3^2+a f u_4^2),\\
         &-b g (a c u_1 u_2-f u_3 u_4),\\
         &-a c (d u_1^2+b^2 f u_2^2)+f (a d u_3^2+b^2 c u_4^2),\\
         &a b^2 c (c u_1^2+a u_2^2-f u_3^2-u_4^2),\\
         &a b c g (u_1 u_2-u_3 u_4),\\
	 &a c (a d u_1^2-b^2 c e u_2^2+d e u_3^2-b^2 f u_4^2) \bigr]
\end{split}
\end{equation}
and all other entries are zero.

Furthermore, $\bigl( l_i(D+D_1) l_j(D-D_1) + l_j(D+D_1) l_i(D-D_1) \bigr)$
is the same as 
$\bigl( l_i(D+D_1) l_j(D+D_1+E_1) + l_j(D+D_1) l_i(D+D_1+E_1) \bigr)$.
If we now use that $y_i = l_i(D+D_1)$ and the fact that addition by~$E_1$
induces $[l_1,l_2,l_3,l_4] \mapsto [l_1,l_2,-l_3,-l_4]$, 
we see that our matrix is
also projectively equal to~$N$, which satisfies
\begin{equation}\label{genus2biquadD1var}
\begin{split}
&\bigl[ N_{1,1}, N_{1,2}, N_{2,2}, N_{3,3}, N_{3,4}, N_{4,4} \bigr]\\
&= \bigl[ y_1^2, y_1 y_2, y_2^2, -y_3^2, -y_3 y_4, -y_4^2 \bigr]. 
\end{split}
\end{equation}
Since~(\ref{genus2biquadD1}) and (\ref{genus2biquadD1var})
are projectively equal, we see that we now have a number of equations
satisfied by $\bigl( [u_1,u_2,u_3,u_4], [y_1,y_2,y_3,y_4] \bigr)$,
namely anything of the form
\begin{equation}\label{someeqns}
M_{i_1,j_1} N_{i_2,j_2} - M_{i_2,j_2} N_{i_1,j_1} = 0.
\end{equation}
for any $i_1,j_1,i_2,j_2 \in \{ 1,2,3,4 \}$.
For example, the following must be satisfied:
\begin{equation*}
a b c g (u_1 u_2-u_3 u_4) y_1 y_2 
= b g (a c u_1 u_2-f u_3 u_4) y_3 y_4.
\end{equation*}
However, we can see from
Theorem~\ref{generatorsInEachBidegree}
that, unlike the elliptic curves situation, we do not yet have
a complete set of defining equations. So, we shall now proceed
to the equations for the group law since it will turn out that
the group law equations will allow us to deduce the missing
defining equations of our variety.

\subsection*{The group law for our $\Projective^3 \times \Projective^3$
embedding.}
We can now derive the group law,
which will soon be described in Theorem~\ref{theoremGroupLawGenus2}. 
We know from Theorem~\ref{PQAddition} that the projective matrix
$\bigl( l_i(D+E) l_j(D-E+D_1) \bigr)$ is the same as a matrix whose entries 
are linear combinations of the terms of the form
$l_{i_1}(D) l_{i_2}(E) l_{i_3}(D+D_1) l_{i_4}(E+D_1)$.
If we let, for $i \in \{ 1,2,3,4 \}$,
\begin{equation}\label{genus2uyvz}
u_i = l_i(D),\ 
y_i = l_i(D+D_1),\
v_i = l_i(E),\
z_i = l_i(E+D_1),
\end{equation}
then we may regard $\bigl( [u_1,u_2,u_3,u_4], [y_1,y_2,y_3,y_4] \bigr)$
and $\bigl( [v_1,v_2,v_3,v_4], [z_1,z_2,z_3,z_4] \bigr)$ as two arbitrary 
points on the $\Projective^3 \times \Projective^3$ embedding of our Jacobian.
We know that there is a matrix~$\bigl( A_{ij} \bigr)$,
where each 
\begin{equation*}
A_{ij} = 
A_{ij}\Bigl( \bigl( [u_1,u_2,u_3,u_4], [y_1,y_2,y_3,y_4] \bigr),
    \bigl( [v_1,v_2,v_3,v_4], [z_1,z_2,z_3,z_4] \bigr) \Bigr)
\end{equation*}
is a linear combination of terms of the
form $u_{i_1} v_{i_2} y_{i_3} z_{i_4}$, with the property
that $\bigl( A_{ij} \bigr) = \bigl( l_i(D+E) l_j(D-E+D_1) \bigr)$.
These coefficients can be determined just from
the linear equations in the coefficients arising from considering
the effects of
(i)
translations by~$E_1,E_2,E_3$ and~$D_1$,
(ii)
swapping~$D$ and~$E$,
as well as the cases when:
(iii)
$D$ is general and $E$ is any of the $2$-torsion points,
(iv)
$E$ is general and $D$ is any of the $2$-torsion points,
and
(v)
$D = E = D_1$. These are derived in the file~\cite{FlynnMaple}
and give the following result.
\begin{equation}\label{Agenus2}
\begin{split}
A_{11} &= -a b c (u_1 y_1 (d v_1 z_1+b e v_2 z_2) 
        + b e u_2 y_2 (v_1 z_1-b v_2 z_2))\\
       &\ \  -b f (e u_3 y_3 (d v_3 z_3-b v_4 z_4) 
        - b u_4 y_4 (e v_3 z_3+b v_4 z_4)),\\
A_{12} &= a b^2 c f (u_1 y_2 (v_1 z_2-b v_2 z_1) 
        - u_2 y_1 (b v_1 z_2-v_2 z_1))\\
       &\ \  +b^2 f^2 (-u_3 y_4 (v_3 z_4-b v_4 z_3) 
        + u_4 y_3 (b v_3 z_4-v_4 z_3)),\\
A_{13} &= a b f (u_1 y_3 (d v_3 z_1-b v_4 z_2) 
        - u_3 y_1 (d v_1 z_3-b v_2 z_4))\\
       &\ \  +a b^2 f (-u_2 y_4 (v_3 z_1-b v_4 z_2) 
        + u_4 y_2 (v_1 z_3-b v_2 z_4)),\\
A_{14} &= b^2 c f (u_1 y_4 (b v_4 z_1+e v_3 z_2) 
        - u_4 y_1 (b v_1 z_4+e v_2 z_3))\\
       &\ \  +b^2 c e f (u_2 y_3 (v_4 z_1-b v_3 z_2) 
        - u_3 y_2 (v_1 z_4-b v_2 z_3)),\\
A_{22} &= a c (d u_1 y_1 (v_1 z_1-b v_2 z_2) 
        - b u_2 y_2 (d v_1 z_1+b e v_2 z_2))\\
       &\ \  +f (d u_3 y_3 (e v_3 z_3+b v_4 z_4) 
        + b u_4 y_4 (d v_3 z_3-b v_4 z_4)),\\
A_{23} &= b^2 c f (-u_1 y_4 (v_4 z_1-b v_3 z_2) 
        + u_4 y_1 (v_1 z_4-b v_2 z_3))\\
       &\ \  +b^2 c f (u_2 y_3 (b v_4 z_1+e v_3 z_2) 
        - u_3 y_2 (b v_1 z_4+e v_2 z_3)),\\
A_{24} &= a d f (-u_1 y_3 (v_3 z_1-b v_4 z_2) 
        + u_3 y_1 (v_1 z_3-b v_2 z_4))\\
       &\ \  +a b f (u_2 y_4 (d v_3 z_1-b v_4 z_2) 
        - u_4 y_2 (d v_1 z_3-b v_2 z_4)),\\
A_{33} &= a b c (u_1 y_1 (d v_3 z_3-b v_4 z_4) 
        + b u_2 y_2 (e v_3 z_3+b v_4 z_4))\\
       &\ \  -a b c (u_3 y_3 (d v_1 z_1+b e v_2 z_2) 
        - b u_4 y_4 (v_1 z_1-b v_2 z_2)),\\
A_{34} &= a b^2 c f (-u_1 y_2 (v_3 z_4-b v_4 z_3) 
        + u_2 y_1 (b v_3 z_4-v_4 z_3))\\
       &\ \  +a b^2 c f (u_3 y_4 (v_1 z_2-b v_2 z_1)
        - u_4 y_3 (b v_1 z_2-v_2 z_1)),\\
A_{44} &= a c (d u_1 y_1 (e v_3 z_3+b v_4 z_4) 
        - b e u_2 y_2 (d v_3 z_3-b v_4 z_4))\\
       &\ \  -a c (d e u_3 y_3 (v_1 z_1-b v_2 z_2) 
        + b u_4 y_4 (d v_1 z_1+b e v_2 z_2)),
\end{split}
\end{equation}
where $A_{ij}$ for $i > j$ are defined by
\begin{equation}\label{Aijremaining}
\begin{split}
&A_{ij}\Bigl( \bigl( [u_1,u_2,u_3,u_4], [y_1,y_2,y_3,y_4] \bigr), 
     \bigl( [v_1,v_2,v_3,v_4], [z_1,z_2,z_3,z_4] \bigr) \Bigr)\\
=
&A_{ji}\Bigl( \bigl( [y_1,y_2,y_3,y_4], [u_1,u_2,u_3,u_4] \bigr), 
     \bigl( [v_1,v_2,v_3,v_4], [z_1,z_2,z_3,z_4] \bigr) \Bigr),
\end{split}
\end{equation}
due to the fact that $A_{ij}(D,E) = A_{ji}(-D-D_1,E)$.
We see that any
nonzero
column of $\bigl( A_{ij} \bigr)$ gives 
the $u$-coordinates of~$D+E$;
this corresponds to a choice of $j$ with $l_j(D-E+D_1) \neq 0$.

By the same reasoning as in the previous section, if we define
\begin{equation}\label{genus2Jdefn}
\begin{split}
&J_{ij}\Bigl( \bigl( [u_1,u_2,u_3,u_4], [y_1,y_2,y_3,y_4] \bigr), 
     \bigl( [v_1,v_2,v_3,v_4], [z_1,z_2,z_3,z_4] \bigr) \Bigr)\\
&= A_{ji}\Bigl( \bigl( [u_1,u_2,u_3,u_4], [y_1,y_2,y_3,y_4] \bigr), 
     \bigl( [v_1,v_2,v_3,v_4], [z_1,z_2,-z_3,-z_4] \bigr) \Bigr),
\end{split}
\end{equation}
then
$J_{ij} = l_i(D+E+D_1) l_j(D-E)$, and
any
nonzero
column of $\bigl( J_{ij} \bigr)$ gives 
the $y$-coordinates of~$D+E$.


The above discussion gives our desired description of the group law for our
$\Projective^3 \times \Projective^3$ embedding.

\begin{theorem}
\label{theoremGroupLawGenus2}
Let $D$ and $E$ be given as elements of
$\Projective^3 \times \Projective^3$,
as above.  Then the image of $D+E$ in
$\Projective^3 \times \Projective^3$
is given by any column of $\bigl( A_{ij} \bigr)$
in the first $\Projective^3$,
together with any column of $\bigl( J_{ij} \bigr)$ in
the second $\Projective^3$.
More generally, we can take linear combinations of columns: for any
choice of $c_1,\dots,c_4$ and $c'_1,\dots,c'_4$ for which
$\sum_j c_j l_j(D-E+D_1) \neq 0$ and
$\sum_j c'_j l_j(D-E) \neq 0$, we have that $D+E$ is represented by
$([\sum_j c_j A_{ij}]_i, [\sum_j c'_j J_{ij}]_i)
  \in \Projective^3 \times \Projective^3$.
\end{theorem}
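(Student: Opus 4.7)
The plan is to exploit the rank-one outer-product structure of the matrices $\bigl(A_{ij}\bigr)$ and $\bigl(J_{ij}\bigr)$ on the abelian surface. First, I would invoke Theorem~\ref{PQAddition} together with the determination of coefficients recorded in~\eqref{Agenus2} and~\eqref{Aijremaining} to obtain the projective matrix identity
\begin{equation*}
\bigl(A_{ij}(D,E)\bigr)_{i,j} \;=\; \bigl(l_i(D+E)\,l_j(D-E+D_1)\bigr)_{i,j}
\end{equation*}
in $\Projective^{15}$. Concretely this means there is a nonzero scalar $\lambda(D,E)$ with $A_{ij}(D,E) = \lambda(D,E)\, l_i(D+E)\, l_j(D-E+D_1)$ for all $i,j$. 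I would then pass to $\bigl(J_{ij}\bigr)$ via the symmetry $J_{ij}(D,E) = A_{ji}(D,-E)$: the substitution $(z_1,z_2,z_3,z_4) \mapsto (z_1,z_2,-z_3,-z_4)$ defining $J_{ij}$ in~\eqref{genus2Jdefn} is precisely the effect of negation on the embedding, by the rule for $D \mapsto -D$ in~\eqref{mapsonembedding}. Hence $\bigl(J_{ij}(D,E)\bigr)_{i,j} = \bigl(l_i(D+E+D_1)\,l_j(D-E)\bigr)_{i,j}$ projectively.

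Once these outer-product identities are in hand, the theorem is a one-line computation. For any coefficient vector $(c_1,\dots,c_4)$, I would compute
\begin{equation*}
\sum_j c_j A_{ij}(D,E) \;=\; \lambda(D,E)\, l_i(D+E) \cdot \Bigl(\sum_j c_j\, l_j(D-E+D_1)\Bigr),
\end{equation*}
so that the resulting $4$-tuple indexed by $i$ is the scalar $\lambda(D,E)\sum_j c_j l_j(D-E+D_1)$ times $\bigl(l_i(D+E)\bigr)_i$. Under the hypothesis $\sum_j c_j l_j(D-E+D_1) \neq 0$, this scalar is nonzero, and the $4$-tuple is a valid projective representative of $u(D+E) = \bigl[l_i(D+E)\bigr]_i$ in the first $\Projective^3$. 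The parallel computation $\sum_j c'_j J_{ij}(D,E) = \lambda'(D,E)\, l_i(D+E+D_1) \cdot \sum_j c'_j l_j(D-E)$ yields the $\y$-coordinates $y(D+E) = \bigl[l_i(D+E+D_1)\bigr]_i$ in the second $\Projective^3$. Specializing $c$ and $c'$ to standard basis vectors $e_j$ recovers the assertion about individual columns.

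The main obstacle is the honest verification that the concrete polynomial formulas in~\eqref{Agenus2} realize the outer-product identity. Theorem~\ref{PQAddition} guarantees the existence of polynomials of multidegree $(1,1,1,1)$ with the required projective matrix property, but matching the specific coefficients to the theoretical identity requires imposing enough specializations --- translations by $E_1, E_2, E_3$ and $D_1$; the $D \leftrightarrow E$ swap; $D$ or $E$ restricted to the $2$-torsion; and $D=E=D_1$ --- to pin down the coefficients uniquely, and then verifying consistency on the whole of $\J \times \J$. This finite but intricate linear-algebra step is the substantive calculation relegated to the supplementary file~\cite{FlynnMaple}; everything else in the proof is the bookkeeping above. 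Note that the non-vanishing hypotheses on $\sum_j c_j l_j(D-E+D_1)$ and $\sum_j c'_j l_j(D-E)$ are essential and cannot be removed geometrically --- the question of when a specified column never vanishes over the base field is exactly the universality issue addressed later in Section~\ref{section5}.
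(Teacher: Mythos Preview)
Your proposal is correct and follows essentially the same approach as the paper: the theorem is presented there as a summary of the preceding discussion, which invokes Theorem~\ref{PQAddition} to assert the existence of the multidegree-$(1,1,1,1)$ polynomials $A_{ij}$, determines their coefficients via the specializations you list (with details in~\cite{FlynnMaple}), and derives $J_{ij}$ by exactly the substitution $z_3,z_4 \mapsto -z_3,-z_4$ corresponding to negation on~$E$. Your explicit rank-one outer-product formulation and the scalar computation for linear combinations of columns make transparent what the paper leaves implicit in the phrase ``any nonzero column,'' but the underlying argument is the same.
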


\subsection*{A complete set of defining equations.}
Let us return to the defining equations, which we shall
shortly be in a position to describe completely
in Theorem~\ref{theoremdefeqns}. 
To make upcoming expressions more succinct, we shall
now define several quadratic forms.
For $i\in \{1,2,3,4,5,6\}$, let $r_i = r_i(u_1,u_2,u_3,u_4)$
be the following quadrics in~$u_1,u_2,u_3,u_4$,
that we have already encountered before as certain $M_{i,j}$
in~\eqref{genus2biquadD1}:
\begin{equation}\label{ridefn}
\begin{split}
 r_1 &= b^2 (a c (-f u_1^2+e u_2^2)-c e f u_3^2+a f u_4^2),\\
 r_2 &= -b g (a c u_1 u_2-f u_3 u_4),\\
 r_3 &= -a c (d u_1^2+b^2 f u_2^2)+f (a d u_3^2+b^2 c u_4^2),\\
 r_4 &= a b^2 c (c u_1^2+a u_2^2-f u_3^2-u_4^2),\\
 r_5 &= a b c g (u_1 u_2-u_3 u_4),\\
 r_6 &= a c (a d u_1^2-b^2 c e u_2^2+d e u_3^2-b^2 f u_4^2).
\end{split}
\end{equation}
For $i\in \{1,2,3,4,5,6\}$, let $s_i$ be exactly the same quadric 
in $y_1,y_2,y_3,y_4$; that is, define:
\begin{equation}\label{siri}
s_i = r_i(y_1,y_2,y_3,y_4).
\end{equation}
We recall that we previously found a number of quartic forms
satisfied by $u_1,u_2,u_3$, $u_4,y_1,y_2,y_3,y_4$. The~$u_i$ and the~$y_i$
each satisfy the Kummer surface equation~(\ref{kummereqnl}), giving a quartic 
form purely in the~$u_i$, and another purely in the~$y_i$,
that is to say, forms of bidegrees~$(4,0)$ and~$(0,4)$.
We also previously noted the projective equality in
the arrays given in~(\ref{genus2biquadD1}) and~(\ref{genus2biquadD1var}),
which gives forms of bidegree~$(2,2)$.
These do not so far give a complete
set of defining equations. What we now also have available is that 
the columns of the matrix~$\bigl( A_{ij} \bigr)$ given in~(\ref{Agenus2}),
are projectively equal, and we may use this for any specified
choice of~$E$ to give further quartics. 
These quartics arise as the $2 \times 2$ minors
of~$(A_{ij})$, and they are of bidegree~$(2,2)$, due to the fact that
each entry of the matrix is itself of bidegree~$(1,1)$ once $E$ is fixed.
We merely need to apply
this for the choice~$E = D_2$ in order to obtain the remaining
forms of bidegree~$(2,2)$.
We may further derive from these
every $u_i (u_2 s_1 - u_1 s_2)$, for $i\in \{ 1,2,3,4\}$ and
so deduce that $u_2 s_1 - u_1 s_2$ must be satisfied.
We may similarly deduce that $u_2 s_2 - u_1 s_3$, $u_4 s_4 - u_3 s_5$, 
$u_4 s_5 - u_3 s_6$, $y_2 r_1 - y_1 r_2$, $y_2 r_2 - y_1 r_3$,
$y_4 r_4 - y_3 r_5$, $y_4 r_5 - y_3 r_6$ are all satisfied.
At this point, we are able to obtain a complete set of defining equations,
as described in the next theorem.
\begin{theorem}\label{theoremdefeqns}
Let~$K$ be any field, not of characteristic~$2$, and let
$a,b,c \in K$. Let~$\calC$ be as defined in~(\ref{eqnforgenus2C}),
where~$d,e,f,g$ are as in~(\ref{defndefg}), 
with $2abcdefg(a-1)(b^2-1)(c-1)$ nonzero, and let~$\J$ be the
Jacobian variety of~$\calC$. Let~$E_1,E_2,E_3 \in \J(K)$ be the
points of order~$2$ and~$D_1\in \J(K)$ the point of order~$4$,
such that~$2 D_1 = E_1$, described immediately after~(\ref{defndefg}).
For any~$D\in \J(K)$, let $l(D) = [ l_1(D), l_2(D), l_3(D), l_4(D) ]$
be the embedding of the Kummer surface given in~(\ref{kummerchangebasis}). 
Embed the Jacobian variety into $\Projective^3 \times \Projective^3$
according to the embedding $\bigl( l(D), l(D+D_1) \bigr)$ given
in~(\ref{genus2P3xP3}), and let
$\bigl( [u_1,u_2,u_3,u_4], [y_1,y_2,y_3,y_4] \bigr)$ be a member
of this embedding. Furthermore, let~$r_i$ and~$s_i$ be the
quadratic forms given in~(\ref{ridefn}) and~(\ref{siri}),
for $i\in \{ 1,2,3,4,5,6 \}$. Then the following is a set
of defining equations.
\begin{equation}\label{genus2defeqns}
\begin{split}
 & r_2^2 - r_1 r_3,\  s_2^2 - s_1 s_3,\\
 & u_2 s_1 - u_1 s_2,\ u_2 s_2 - u_1 s_3,\ 
   u_4 s_4 - u_3 s_5,\ u_4 s_5 - u_3 s_6,\\
 & y_2 r_1 - y_1 r_2,\  y_2 r_2 - y_1 r_3,\  
   y_4 r_4 - y_3 r_5,\  y_4 r_5 - y_3 r_6,\\
 & r_1 y_3^2 + r_4 y_1^2,\  r_1 y_4^2 + r_6 y_1^2,\  
   r_2 y_3 y_4 + r_5 y_1 y_2,\\
 & a (b u_2 y_1-u_1 y_2) (b u_4 y_3-u_3 y_4) 
    - (e u_3 y_2+b u_4 y_1) (b u_2 y_3-u_1 y_4),\\
 & c (e u_3 y_3+b u_4 y_4) (b u_2 y_2-u_1 y_1) 
    - f (b u_2 y_4-u_1 y_3) (b u_4 y_2-u_3 y_1).
\end{split}
\end{equation}
\end{theorem}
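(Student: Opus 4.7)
The proof strategy is to apply Theorem~\ref{generatorsInEachBidegree}, which states that the bigraded ideal $I$ cutting out our $\Projective^3\times\Projective^3$ embedding is generated by $I_{4,0}$, $I_{0,4}$, $I_{2,1}$, $I_{1,2}$, and $I_{2,2}$, of respective dimensions $1$, $1$, $4$, $4$, and $36$. Thus it suffices to show (i) that each listed equation vanishes on $\J$, and (ii) that the listed equations, together with the products $u_i\cdot f$ and $y_j\cdot g$ where $f$ ranges over the listed $(1,2)$ equations and $g$ over the listed $(2,1)$ equations, span these five components of $I$.

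For (i): the Kummer equation~\eqref{kummereqnl} can be rewritten, after recognizing that $r_1,r_2,r_3$ are (up to sign) precisely $M_{1,1},M_{1,2},M_{2,2}$ in~\eqref{genus2biquadD1}, as $r_2^2 = r_1 r_3$, giving $r_2^2 - r_1 r_3 \in I_{4,0}$, and analogously $s_2^2 - s_1 s_3 \in I_{0,4}$. The bidegree $(2,1)$ and $(1,2)$ equations arise as $2\times 2$ minors of the projective equality between the six nonzero entries of $M$ in~\eqref{genus2biquadD1} and the corresponding entries of $N$ in~\eqref{genus2biquadD1var}, once we strip off common factors such as $y_1$ (which is not identically zero on $\J$). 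The three $(2,2)$ equations of the form $r_1 y_3^2 + r_4 y_1^2$ arise in the same way. The two remaining biquadratic $(2,2)$ equations arise as $2\times 2$ minors of the addition matrix $(A_{ij})$ of~\eqref{Agenus2} specialized to $E=D_2$: using $l(D_2)=[0,1,0,-\sqrt{a}]$ and $l(D_2+D_1)=l(D_3)=[1,0,0,\sqrt{c}]$, the columns of $A$ become projectively equal linear combinations that, after taking Galois-invariant combinations to eliminate $\sqrt{a}$ and $\sqrt{c}$, yield the two displayed $K$-rational forms.

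For (ii): the components $I_{4,0}$ and $I_{0,4}$ are each one-dimensional, so the single nonzero listed equation in each spans. The component $I_{2,1}$ has dimension $4$, matching the four listed equations of that bidegree, and linear independence follows by splitting into the two pairs $\{y_2 r_1 - y_1 r_2,\ y_2 r_2 - y_1 r_3\}$ and $\{y_4 r_4 - y_3 r_5,\ y_4 r_5 - y_3 r_6\}$, which have disjoint $y$-support, and then using that $r_1,r_2,r_3$ (resp.\ $r_4,r_5,r_6$) are independent quadrics in the $u$-variables. The identical argument handles $I_{1,2}$.

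The main obstacle is the treatment of $I_{2,2}$, which has dimension $36$. Beyond the five listed equations directly in bidegree $(2,2)$, the ideal $J$ generated by all listed equations automatically contains the $16$ products of coordinate functions $u_i$ with each listed $(1,2)$ equation, and the $16$ products of $y_j$ with each listed $(2,1)$ equation. Since $J_{2,2}\subseteq I_{2,2}$ a priori and $\dim I_{2,2}=36$, it remains only to check that the span of these $5+16+16=37$ explicit bidegree-$(2,2)$ polynomials, sitting inside the $100$-dimensional space $R_{2,2}$, has rank at least $36$. This is a concrete but voluminous linear-algebra verification, naturally carried out in the Maple file~\cite{FlynnMaple} that accompanies this work.
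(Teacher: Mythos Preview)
Your proposal is correct and follows essentially the same approach as the paper: verify that each listed equation vanishes on~$\J$ (you give a narrative explanation, the paper defers to the Maple file~\cite{FlynnMaple}), check that the listed equations generate linearly independent elements filling out each of $I_{4,0}$, $I_{0,4}$, $I_{2,1}$, $I_{1,2}$, $I_{2,2}$ to the dimensions prescribed by Theorem~\ref{generatorsInEachBidegree}, and conclude. The paper also records the counts in bidegrees $(3,1)$ and $(1,3)$ as a sanity check, and emphasizes that the Maple independence computation is done generically in the parameters $a,b,c$ (via a determinant divisible only by factors of the discriminant), so that the result holds for all admissible parameter values, not just a single specialization---a point worth making explicit in your write-up as well.
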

\begin{proof} It is checked in the file~\cite{FlynnMaple}
that all of the above are satisfied, and that they
generate independent equations as follows:
$1$~of bidegree~$(4,0)$, $1$~of bidegree~$(0,4)$,
$16$~of bidegree~$(3,1)$, $16$~of bidegree~$(1,3)$, 
$36$~of bidegree~$(2,2)$,
$4$~of bidegree~$(2,1)$ and $4$~of bidegree~$(1,2)$.
The way that the independence of a collection of $N$ equations is checked
in~\cite{FlynnMaple} is by finding a collection of $N$ monomials
appearing in the equations, and showing that the corresponding
$N\times N$ matrix giving the coefficient of the $i$th monomial in the
$j$th equation has a determinant that is divisible only by factors of
the nonzero discriminant $2abcdefg(a-1)(b^2-1)(c-1)$.  Thus the
equations that we have found are independent for all values of the
parameters that we are considering.
From Theorem~\ref{generatorsInEachBidegree}
we see that we must therefore
have a complete set of defining equations.
\end{proof}
At this stage, we now have a complete description of
our variety; (\ref{genus2defeqns}) gives a set of defining
equations, and the group law is given by any column of the 
matrix~$\bigl( A_{ij} \bigr)$ given in~(\ref{Agenus2}) together with 
any column of~$\bigl( J_{ij} \bigr)$ given in~(\ref{genus2Jdefn}).
It is apparent how much nicer both the defining equations and
description of the group law are here, compared with the
$\Projective^{15}$ embedding given in~\cite{CasselsFlynnBook},
where the defining equations were given as~$72$ quadrics,
and the defining equations on the Jacobian variety were too
enormous to be given explicitly in general.

We also note that the linear maps in~(\ref{mapsonembedding})
and~(\ref{swapmap})
give rise to
a
number of symmetries in our defining equations.
We see that the defining equations
on the third line of~(\ref{genus2defeqns}) are the images
of those on the second line of~\eqref{genus2defeqns}
under the transformation of~(\ref{swapmap}).
So, if we wish, this gives a still more succinct description
in terms of a smaller number of defining equations, together
with their orbits under~(\ref{mapsonembedding}) and~(\ref{swapmap})

\subsection*{Twists of our abelian surface.}
We note that, since the effect of negation, described
in~(\ref{mapsonembedding}), negates $y_3$ and $y_4$, we may create
a twist of our abelian surface by replacing these with
$\sqrt{\kk_1} y_3, \sqrt{\kk_1} y_4$, and this will still be defined
over the ground field~$K$, for the same reasons as described
in the previous section for elliptic curves.
Similarly, $D \mapsto -D - E_1$ negates $u_3,u_4$
and $D \mapsto D + E_3$ negates $u_2,u_4,y_2,y_4$, so we have
the following twists. 
\begin{definition} 
Let~$\J$ be as given in Theorem~\ref{theoremdefeqns}. For any
nonsquare $\kk_1,\kk_2,\kk_3 \in K$, define~$\J^{(\kk_1,\kk_2,\kk_3)}$ 
to be the abelian
surface, defined over~$K$, whose defining equations are
the same
as those
given in Theorem~\ref{theoremdefeqns}, except
that $u_1,u_2,u_3,u_4$ are replaced by
$u_1,\sqrt{\kk_3}u_2, \sqrt{\kk_2}u_3, \sqrt{\kk_2}\sqrt{\kk_3}u_4$,
respectively, and
$y_1,y_2,y_3,y_4$
are
replaced by 
$y_1, \sqrt{\kk_3}y_2, \sqrt{\kk_1} y_3, \sqrt{\kk_1}\sqrt{\kk_3} y_4$, 
respectively.
\end{definition}

\section{Conditions for non-degeneracy of the group law}
\label{section5}

Our goal in this section is to find conditions on the parameters (analogous
to the condition for Edwards curves that~$d$ is nonsquare)
which will imply that the group law is universal; this will 
be stated in Corollary~\ref{genus2universal}
(a consequence of Theorem~\ref{alwaysnonzero}).


Our strategy is to search for suitable $c_1, \dots, c_4$ for which the
sum $\sum_j c_j l_j(F)$ is nonzero for all $K$-rational divisors
$F \in \AV$ (where the~$l_j$ are as defined
in~(\ref{kummerchangebasis})).  
By Theorem~\ref{theoremGroupLawGenus2}, with such a
choice of $\{c_j\}$ (and the same choice of $c'_j = c_j$), the expressions
$\sum_j c_j l_j(D+E+D_1)$ and $\sum_j c_j l_j(D-E)$ will never be zero when
$D$ and $E$ are themselves $K$-rational. This will yield a universal
group law.

We will illustrate three attempts to find such a linear combination
$s = \sum_j c_j l_j$ which does not vanish at any $K$-rational point of
$\AV$.  The first two attempts were instructive near misses, and the
third attempt was successful in identifying a concrete set of
conditions on the parameters which would ensure the nonvanishing of
$s$ on $K$-rational points.

The vanishing locus of $s$ over the algebraic closure $\Kbar$ of $K$
is best understood in terms of linear series.  Recall from
Section~\ref{section2} that $s$ is a section of the line bundle
$\LL^2$.  As in the proof of Lemma~\ref{lemmaTheta1Nonvanishing}, let
$\Theta$, the theta-divisor, be the vanishing locus of
$\theta_{[1],0}$; then $\LL \isomorphic \OO_\AV(\Theta)$, and $\Theta$
is isomorphic to $\calC$.  Then the vanishing loci for the different
choices of $s$ are the divisors on $\AV$ belonging to the linear
series $|2\Theta|$.  For a generic choice of $s$ (over $\Kbar$), its
vanishing locus is a smooth curve of genus~$5$; this follows from the
adjunction formula, which in the case of an abelian surface says that
a smooth genus~$g$ curve $\mathcal{X} \subset \AV$ has self-intersection
$\mathcal{X} \cdot \mathcal{X} = 2g-2$, since the canonical bundle on
$\AV$ is trivial.  So for a typical $K$-rational choice of $s$, we
expect that the $K$-rational points where $s$ vanishes are the
$K$-rational points of a curve of genus~$5$; this is the phenomenon we
observe in our first attempt below.

In our second and third attempts below, we start from the observation
that for $p \in \AV(\Kbar)$, the divisor $(\Theta+p) \union (\Theta-p)$
belongs to $|2\Theta|$.  Here the two translates $\Theta\pm p$ of
$\Theta$ intersect in two points, because $\Theta \cdot \Theta = 2$.
(The intersection points might not be distinct.)
For certain choices of $p$, we can hope that each of the two
irreducible components $\Theta + p$ and $\Theta - p$
is defined over a common quadratic extension of $K$, and
that the two components are conjugate to each other.  In that
situation, the only $K$-rational points on the union are potentially
the intersection points between the two components.

When the two intersection points coincide at a point $q$, then this
point is $K$-rational, and is the only $K$-rational vanishing point of
$s$.
This corresponds to our second attempt below.  In our third attempt, we
identify a situation and choice of $s$ that does not vanish at any
$K$-rational points.  We believe, but have not checked the details,
that in this situation the two points of intersection
are individually defined over a quadratic extension of $K$, but not
over $K$ itself.  The construction in our third attempt has a similar
flavor to the constructions in Section~4.2
of~\cite{AreneKohelRitzenthaler} and Section 2.1
of~\cite{AreneCosset}, which treat the case of the embedding of $\AV$
into $\Projective^{15}$.

\subsection*{Two near miss attempts.}
For our first attempt, let us imagine that~$u_1 = 0$. Suppose also that the
following are not squares in the ground field:
$a,c,d,-e,-cef,af,$ $cd,adf,-ae,cf$.
Then,
as shown in~\cite{FlynnMaple},
one can deduce directly
from the defining equations that $u_2,u_3,u_4,y_1,y_2,y_3,y_4$
are all nonzero and that one can set $u_2=1, y_1=1$,
and use the defining equations to eliminate $u_3,y_2$;
the defining equations then become equivalent to the affine curve:
\begin{equation}\label{genus5a}
\begin{split}
u_4^2 (-f y_4^2+a c)/(-b^2 f y_4^2+a c d) &= 
-y_4^2 a c e (c-1)^2,\\
e y_3^2 (-f y_4^2+a c^2) &= -a^2 c (-y_4^2+c).
\end{split}
\end{equation}
This is birationally equivalent to the following genus~$5$
affine curve:
\begin{equation}\label{genus5b}
\begin{split}
 -a c e (f X^2-a c) (b^2 f X^2-a c d) &= Y^2,\\
  -c e (X^2-c) (f X^2-a c^2) &= Z^2.
\end{split}
\end{equation}
In summary, provided that $a,c,d,-e,-cef,af,cd,adf,-ae,cf$
are non-square in the ground field~$K$ and provided
that the curve~(\ref{genus5b}) has no $K$-rational points,
then any point $\bigl( [u_1,u_2,u_3,u_4], [y_1,y_2,y_3,y_4] \bigr)$
will satisfy that~$u_1$ and~$y_1$ are nonzero, so that
our projective variety is the same as the affine
variety in $U_2 = u_2/u_1, U_3 = u_3/u_1, U_4 = u_4/u_1$ 
and $Y_2 = y_2/y_1, Y_3 = y_3/y_1, Y_4 = y_4/y_1$, and furthermore
there is a universal group law given by
the first column of
matrix~$\bigl( A_{ij} \bigr)$ given in~(\ref{Agenus2}) together with
the first column of~$\bigl( J_{ij} \bigr)$ given in~(\ref{genus2Jdefn}).
This could happen over~$\Q$ but, for a given curve, could not
happen for arbitrarily large finite fields, since the above
genus~$5$ curve would eventually acquire points over these.
Of course, even if the above conditions are not satisfied,
these matrices will always give a complete description of the
group law; it is merely that one then requires different columns
for certain additions, rather than having any particular
column apply universally.

As a second attempt, consider the situation when~$k_3=0$,
which corresponds to~$D\in \J(K)$ which is either
of the form $\{ (x,y), (0,b \sqrt{a c d e g (d e - b^2 f)})\}$
or is the identity. 
Provided that $a c d e g (d e - b^2 f)$ is nonsquare in~$K$,
we see that~$D$ can only be defined over~$K$ if it is the
identity element, and this is the only intersection between~$k_3=0$
and our variety~$\J$. The condition~$k_3=0$, after the linear
change in coordinates~(\ref{kummerchangebasis}),(\ref{entriesofQ}),
corresponds to $a d u_1 - b^2 c e u_2 - d e u_3 + b^2 f u_4$, and
so this will only be zero when~$D$ is the identity.
For any $\bigl( [u_1,u_2,u_3,u_4], [y_1,y_2,y_3,y_4] \bigr)$,
we see that this linear combination of the $u$-coordinates
will only be zero when~$D$ is the identity, and the same linear
combination of the~$y$-coordinates will only be zero
when~$D = -D_1$. If we take our group law to be the same
linear combination of the columns of~$A$ together with the
same linear combination of the columns of~$J$ then, for any given~$D$,
this will only be degenerate when $E = D$ or $E = D+D_1$.
This condition can never give a universal group law, but it
does reduce the exceptional cases to these two values of~$E$,
corresponding to the fact that the curve which geometrically
describes such cases only has two $K$-rational points.

\subsection*{Third and successful attempt.}
We shall obtain a universal group law under conditions merely
that certain expressions are squares and others are non-squares
over our ground field~$K$; this will be stated in 
Corollary~\ref{genus2universal} (a consequence 
of Theorem~\ref{alwaysnonzero}).
In order to obtain the situation we desire 
in our third attempt, we first impose
the conditions that~$a$ and~$cf$ are squares in~$K$. The first
condition forces~$D_2$ to be defined over the field~$K(\sqrt{c})$, 
with conjugation corresponding to negation, and the second condition
forces the first quadratic factor of~$\calC$ in~(\ref{eqnforgenus2C})
to have $K$-rational roots, giving two Weierstrass points.
The conditions that~$a$ and~$cf = c(a+c-1)$ are squares in~$K$ can 
be parametrised as follows. We first set $a = s^2$ and~$cf = c(a+c-1) = u^2$,
and deduce $c(s^2+c-1) = u^2$; we then use $(s,u) = (1,c)$ as our
basepoint and define the parameter $\fa = (u-c)/(s-1)$.
Solving for~$s$ gives $s = (2 c \fa - \fa^2 - c)/(-\fa^2 + c)$.
Hence $a = \bigl( (2 c \fa - \fa^2 - c)/(-\fa^2 + c) \bigr)^2$,
which gives that $c f = \delta^2$, where 
$\delta = c(\fa^2 - 2 \fa + c)/(\fa^2 - c)$.
We can now think of our family $\J = \J_{\fa,b,c}$ as being
parametrised by $\fa,b,c \in K$.
One of the $K$-rational Weierstrass points on~$\calC$ is then~$(x_0,0)$,
where $x_0 = (\fa^2 + 2 \fa b^2 c  - 2 \fa c + c)/(\fa^2 + c)$. 
Suppose that $D \in \J(K)$ has the form $\{ (x,y), (x_0,0) \} + D_2$.
Since the conjugation $\sigma : \sqrt{c} \mapsto -\sqrt{c}$ negates~$D_2$
this would force $\{ (x,y), (x,-y)^\sigma \} = E_2$, which we can
hope to prevent, by imposing mild constraints on the parameters.
After describing such~$D$ in terms of our embedding, we find that
it corresponds (the details are in the file~\cite{FlynnMaple})
to the condition $c u_1 - \delta u_3 = 0$.
This motivates taking the intersection of $c u_1 - \delta u_3 = 0$
and our variety, hoping that there are arithmetic conditions on
the parameters which prevent this intersection having
a $K$-rational point. This approach turned out to be successful,
as described in the following theorem, where the condition 
is merely that three expressions in the parameters are nonsquares in~$K$,
similar to the nonsquare-$d$ condition for Edwards curves.
\begin{theorem}\label{alwaysnonzero}
Let~$\J = \J_{a,b,c}$ be as given in Theorem~\ref{theoremdefeqns}, defined
over a field~$K$, not of characteristic~$2$.
Let $d,e,f,g$ be as defined in~(\ref{defndefg}).
Let 
\begin{equation}\label{asoln}
a = \Bigl( \frac{\fa^2 - 2 \fa c + c}{\fa^2 - c} \Bigr)^2,
\end{equation}
for some~$\fa \in K$, so that we may think of~$\J = \J_{\fa,b,c}$
now as a family in terms of the parameters~$\fa,b,c \in K$.
Then~$c f = \delta^2$, where $\delta = c(\fa^2 - 2 \fa + c)/(\fa^2 - c)$,
and~$a = \rho^2$, where $\rho = (\fa^2 - 2 \fa c + c)/(\fa^2 - c)$.

Suppose that $c, c d, g (g - b^2 (c-1))$ are nonsquares in~$K$.
Then~$c u_1 - \delta u_3$ and~$c y_1 - \delta y_3$ are nonzero for every
$\bigl( [u_1,u_2,u_3,u_4],[y_1,y_2,y_3,y_4] \bigr) \in \J (K)$.
\end{theorem}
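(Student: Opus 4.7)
The plan is to prove the theorem by contradiction, exploiting the symmetry of the embedding and the geometric description of the vanishing locus of $cu_1 - \delta u_3$ as a section of $\LL^2$. First, I would note that the map $D \mapsto -D - D_1$ in~\eqref{swapmap} preserves $\J(K)$ (since $D_1 \in \J(K)$) and interchanges the $u_i$ with the $y_i$. Hence the non-vanishing of $cy_1 - \delta y_3$ on $\J(K)$ is equivalent to that of $cu_1 - \delta u_3$, and it suffices to establish the latter.

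Suppose for contradiction that some $K$-rational point $P = \bigl([u_1,u_2,u_3,u_4],[y_1,y_2,y_3,y_4]\bigr) \in \J(K)$ satisfies $cu_1 = \delta u_3$. As flagged in the paragraph preceding the theorem, the vanishing of $cu_1 - \delta u_3$ corresponds to divisors of the form $\{(x,y),(x_0,0)\} + D_2$ (with $x_0$ the Weierstrass point coming from $cf=\delta^2$), so the zero scheme of this section on $\J$ should be the union $(\Theta + D_2) \cup (\Theta - D_2) \in |2\Theta|$. Under our first hypothesis (that $c$ is nonsquare), $D_2$ is defined over $K(\sqrt{c})$ but not over $K$, and the Galois conjugation $\sigma : \sqrt{c} \mapsto -\sqrt{c}$ negates $D_2$ and hence swaps the two translates of $\Theta$. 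Consequently, any $K$-rational point on the union lies on the intersection of the two conjugate components, which by $\Theta \cdot \Theta = 2$ consists of at most two geometric points.

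Next, I would substitute $u_3 = (c/\delta) u_1$ into the complete set of defining equations from Theorem~\ref{theoremdefeqns}: the Kummer quartic $r_2^2 - r_1 r_3$, the four $(2,1)$-forms in $u$'s and $y$'s, the six $(2,2)$-forms, and the two cross-biquadratic relations. Together with the relations $u_2 s_1 - u_1 s_2 = 0$, etc., the substitution should eliminate $u_4$ (up to a square-root whose nature is governed by one of the nonsquare hypotheses) and then determine $[y_1\!:\!y_2\!:\!y_3\!:\!y_4]$ uniquely up to the residual $E_i$-symmetry in~\eqref{mapsonembedding}. This reduces the problem to examining a small finite set of candidate projective solutions $P$ over $\Kbar$, each of which should be defined over $K(\sqrt{c},\sqrt{cd},\sqrt{g(g-b^2(c-1))})$.

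The main obstacle is the case analysis that identifies the exact role of each of the three nonsquare hypotheses. I expect the condition on $c$ to force $D_2 \notin \J(K)$ (as above); the condition on $cd$ to force the half-divisor involving the Weierstrass point $(b\sqrt{d},0)$ to be non-rational, which is the $2$-torsion translate $D + E_2$ responsible for one of the two intersection points; and the condition on $g(g-b^2(c-1))$ to play the analogous role for the first quadratic factor of~\eqref{eqnforgenus2C} which defines $E_1$. Verifying that these three nonsquare conditions suffice to eliminate \emph{every} $K$-rational point of the intersection is best carried out symbolically in~\cite{FlynnMaple}, in the same style as the two near-miss attempts earlier in this section; the computation should reveal, for each candidate point, a coordinate whose $K$-rationality demands that one of $c$, $cd$, or $g(g-b^2(c-1))$ be a square in $K$, contradicting the hypothesis and completing the proof.
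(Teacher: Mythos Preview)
Your geometric framing---that the zero locus of $cu_1-\delta u_3$ on $\J$ is $(\Theta+D_2)\cup(\Theta-D_2)$, that the Galois involution $\sqrt{c}\mapsto-\sqrt{c}$ swaps the two components, and hence that any $K$-point lies in their intersection of self-intersection number~$2$---is exactly the motivation the paper records in the paragraphs preceding the theorem. So conceptually you are on the right track, and the reduction of the $y$-statement to the $u$-statement via $D\mapsto D+D_1$ (or your $D\mapsto -D-D_1$) is the same as the paper's.

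Where your proposal diverges, and where it remains a plan rather than a proof, is in the actual verification. The paper does \emph{not} compute the two intersection points of $\Theta\pm D_2$ and test them for rationality. Instead it proceeds purely algebraically: from the three defining equations $y_2 r_2 - y_1 r_3=0$, $y_4 r_4 - y_3 r_5=0$, $r_2 y_3 y_4 + r_5 y_1 y_2=0$ one extracts the single identity
\[
r_2^2\, r_4\, y_4^2 \;=\; -\,r_3\, r_5^2\, y_1^2.
\]
After substituting $u_1=(\delta/c)u_3$, each of $r_2,r_3,r_4,r_5$ factors explicitly (e.g.\ $r_3=-b^2\delta^2(\rho u_2+u_4)(\rho u_2-u_4)$, $r_4=b^2 c\rho^2(\rho u_2+u_4)(\rho u_2-u_4)$), and one sees that the two sides of the identity, when nonzero, differ exactly by the factor~$c$ modulo squares. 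The hypothesis that $c$ is a nonsquare therefore forces both sides to vanish, producing a five-way split on which factor is zero: $\rho u_2\pm u_4$, $u_3$, $y_4$, or $a u_2-(\delta/c)u_4$. Each branch is then killed by substituting back into other defining equations; the conditions on $c$ and $cd$ together dispatch the $\rho u_2\pm u_4$ branches (they force all $u_i$ or all $y_i$ to vanish), while the nonsquareness of $g(g-b^2(c-1))$ is what eliminates the $a u_2-(\delta/c)u_4$ branch.

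Your proposal skips this mechanism entirely: the step ``eliminate $u_4$ up to a square-root'' does not match what happens, and your conjectured assignment of each nonsquare hypothesis to a specific Weierstrass or $2$-torsion translate is not how the case analysis actually unfolds. Without the $r_2^2 r_4 y_4^2=-r_3 r_5^2 y_1^2$ identity (or an equivalent concrete computation of the two intersection points), you have no handle on the finite set you are trying to rule out, and the appeal to \cite{FlynnMaple} is doing all of the work.
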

\begin{proof}
First note that, since~$c$ is nonsquare, the expression $\fa^2 - c$
is nonzero.
The defining equations given in~(\ref{genus2defeqns}) include
$y_2 r_2 - y_1 r_3 = 0$, $y_4 r_4 - y_3 r_5 = 0$ and 
$r_2 y_3 y_4 + r_5 y_1 y_2 = 0$.
If we now take the sum of: $r_5^2 y_1$ times the first of these,
$-r_2^2 y_4$ times the second of these, and $-r_2 r_5$ times the third
of these, we obtain
\begin{equation}\label{r2r3r4r5eqn}
r_2^2 r_4 y_4^2 = -r_3 r_5^2 y_1^2.
\end{equation}
Imagine that $u_1 - (\delta/c) u_3 = 0$.
Substituting $u_1 = (\delta/c) u_3$ into the above factors, we find that
\begin{equation}\label{r2r3r3r5soln}
\begin{split}
r_2 &= 
- b g \delta u_3 \bigl( a u_2 - \frac{\delta}{c} u_4 \bigr),\\
r_3 &=
- b^2 \delta^2 ( \rho u_2 + u_4 ) ( \rho u_2 - u_4 ),\\
r_4 &=
b^2 c \rho^2 ( \rho u_2 + u_4 ) ( \rho u_2 - u_4 ),\\
r_5 &=
b c g \rho^2 u_3 \bigl( \frac{\delta}{c} u_2 - u_4 ).
\end{split}
\end{equation}
We now see that the two sides of~(\ref{r2r3r4r5eqn}),
if nonzero,
have quotient~$c$
modulo squares. We are assuming that~$c$ is nonsquare in~$K$, so it
follows that both sides of~(\ref{r2r3r4r5eqn}) must be zero.
The factors $b,c,g,\delta,\rho$ are all factors of the
discriminant of~$\calC$, which we are assuming to be nonzero.
It follows that one of the other factors of the left hand
side must be zero, namely one of: $\rho u_2 - u_4$, $\rho u_2 + u_4$,
$u_3$, $y_4$ or~$a u_2 - (\delta/c) u_4$ must be zero.

Consider the case $\rho u_2 - u_4 = 0$. If we substitute
both $u_1 = (\delta/c) u_3$ and $u_2 = u_4/\rho$ into
the defining equation $r_2^2 - r_1 r_3$,
this
gives 
$4 b^2 g^2 c f \fa^2 (c-1)^2 u_3^2 u_4^2/(\fa^2-c)^2 = 0$,
so that either~$u_3=0$ or~$u_4=0$, which forces either
$u_1 = u_3 = 0$ or $u_2 = u_4 = 0$. For the subcase when $u_1 = u_3 = 0$,
then the defining equations $y_2 r_1 - y_1 r_2$ and $y_4 r_5 - y_3 r_6$
give nonzero constants times $u_4^2 y_2$ and $u_4^2 y_3$, respectively;
but $u_4 \not= 0$ (since otherwise all $u$-coordinates would be zero),
so that now $y_2 = y_3 = 0$; putting this into the defining
equation $s_2^2 - s_1 s_3$ gives the factors
$y_4^2 - c y_1^2$ and $b^2 \delta^2 y_4^2 - c d \rho^2 y_1^2$;
the fact that~$c$ and $cd$ are nonsquares then forces $y_1 = y_4 = 0$,
so that all $y$-coordinates are zero, a contradiction.
The subcase $u_2 = u_4 = 0$ gives the same contradiction, using
the same definining equations. We deduce that the case
$\rho u_2 - u_4 = 0$ is impossible.

The case when $\rho u_2 + u_4 = 0$ is also incompatible with
$c$ and $cd$ being nonsquares, using the same defining equations
as the previous case.

The remaining cases $u_3$, $y_4$ and $a u_2 - (\delta/c) u_4$
are shown to be nonzero in a similar style (it is the last 
of these which uses the condition that $g (g - b^2 (c-1))$
is non-square); we have
put the details in the file~\cite{FlynnMaple}.

So we now have, in all cases, a contradiction arising from 
our initial assumption that $u_1 - (\delta/c) u_3 = 0$.
It follows that $c u_1 - \delta u_3$ is always nonzero, as required.
Since $[y_1,y_2,y_3,y_4]$ are the $u$-coordinates
of $D+D_1 \in \J(K)$, it follows that $c y_1 - \delta y_3$
is also always nonzero.
\end{proof}
We observe here that, as long as the conditions of 
Theorem~\ref{alwaysnonzero} are satisfied,
we can treat
the elements of~$\J(K)$ in terms of affine coordinates
in ${\bf A}^3(K) \times {\bf A}^3(K)$. Specifically,
we may represent any member
of~$\J(K)$ by $\bigl( (U_2,U_3,U_4), (Y_2,Y_3,Y_4) \bigr)$,
where each $U_i = u_i/(c u_1 - \delta u_3)$ and 
each $Y_i = y_i/(c y_1 - \delta y_3)$.

The existence of a universally nonzero linear combination
of coordinates now gives a universal group law, as follows.
\begin{corollary}\label{genus2universal}
Let~$\J = \J_{\fa,b,c}$ satisfy the same conditions as 
in Theorem~\ref{alwaysnonzero}, namely that
$c, c d, g (g - b^2 (c-1))$ are nonsquares in~$K$.
Let~$A$ and $J$ be the matrices defined in~(\ref{Agenus2})
and~(\ref{genus2Jdefn}), respectively. Then
\begin{equation}\label{genus2universaleqns}
\begin{split}
\Bigl( &\bigl[ c A_{11} - \delta A_{13}, c A_{21} - \delta A_{23},  
c A_{31} - \delta A_{33}, c A_{41} - \delta A_{43} \bigr],\\  
&\bigl[ c J_{11} - \delta J_{13}, c J_{21} - \delta J_{23},  
c J_{31} - \delta J_{33}, c J_{41} - \delta J_{43} \bigr] \Bigr),
\end{split}
\end{equation}
gives a universal group law on~$\J(K)$.
\end{corollary}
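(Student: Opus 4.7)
The plan is to reduce the corollary directly to Theorem~\ref{alwaysnonzero} combined with Theorem~\ref{theoremGroupLawGenus2}. The coefficients appearing in~\eqref{genus2universaleqns} are precisely $(c_1,c_2,c_3,c_4) = (c,0,-\delta,0)$ (and we take $c'_j = c_j$), so the linear combinations are exactly of the form $\sum_j c_j A_{ij}$ and $\sum_j c_j J_{ij}$ considered in Theorem~\ref{theoremGroupLawGenus2}. The entire content of the proof is therefore to verify that, for any two points $D, E \in \J(K)$, the two nonvanishing conditions $\sum_j c_j l_j(D-E+D_1)\neq 0$ and $\sum_j c_j l_j(D-E) \neq 0$ hold.

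To verify these, I would recall the defining identities $A_{ij} = l_i(D+E)\,l_j(D-E+D_1)$ and $J_{ij} = l_i(D+E+D_1)\,l_j(D-E)$, which factor the relevant expressions as
\begin{equation*}
c A_{i1} - \delta A_{i3} = l_i(D+E)\bigl(c\, l_1(D-E+D_1) - \delta\, l_3(D-E+D_1)\bigr),
\end{equation*}
and analogously $c J_{i1} - \delta J_{i3} = l_i(D+E+D_1)\bigl(c\, l_1(D-E) - \delta\, l_3(D-E)\bigr)$. Since $D_1 \in \J(K)$ by construction, and $D,E \in \J(K)$ by hypothesis, both $F := D-E+D_1$ and $F' := D-E$ lie in $\J(K)$. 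Therefore Theorem~\ref{alwaysnonzero} applies to each: the conclusion $c u_1 - \delta u_3 \neq 0$ for the first $\Projective^3$-coordinates of every $K$-rational point translates, for $F$ and $F'$ respectively, into $c\, l_1(F) - \delta\, l_3(F) \neq 0$ and $c\, l_1(F') - \delta\, l_3(F') \neq 0$.

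With both scalar factors nonzero, the four-tuples $[c A_{i1} - \delta A_{i3}]_i$ and $[c J_{i1} - \delta J_{i3}]_i$ are nonzero scalar multiples of $[l_i(D+E)]_i$ and $[l_i(D+E+D_1)]_i$ respectively, hence define genuine points in $\Projective^3$. By Theorem~\ref{theoremGroupLawGenus2}, the resulting point in $\Projective^3 \times \Projective^3$ is exactly the image of $D+E$ under the embedding~\eqref{genus2P3xP3}. Since $D$ and $E$ were arbitrary elements of $\J(K)$, this formula never degenerates on $K$-rational inputs, which is the required universality of the group law.

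The proof is essentially a bookkeeping exercise once Theorem~\ref{alwaysnonzero} is in hand; there is no serious obstacle. The only point that merits a moment's attention is the closure of $\J(K)$ under the relevant operations, namely that $D-E$ and $D-E+D_1$ are $K$-rational whenever $D,E$ are; this uses that $D_1$ itself is defined over $K$, which is part of the standing setup from Theorem~\ref{theoremdefeqns}.
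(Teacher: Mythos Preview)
Your proof is correct and follows essentially the same approach as the paper's own proof: factor $cA_{i1}-\delta A_{i3}$ and $cJ_{i1}-\delta J_{i3}$ using the identities $A_{ij}=l_i(D+E)l_j(D-E+D_1)$ and $J_{ij}=l_i(D+E+D_1)l_j(D-E)$, then apply Theorem~\ref{alwaysnonzero} to the $K$-rational points $D-E+D_1$ and $D-E$ to guarantee the scalar factors are nonzero. Your write-up is slightly more explicit in invoking Theorem~\ref{theoremGroupLawGenus2} and in noting the $K$-rationality of $D_1$, but the argument is the same.
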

\begin{proof}
The equations in~(\ref{genus2universaleqns}) are the
linear combination: $c$ times the first column minus $\delta$ times
the third column, for the matrices~$A$ and~$J$, respectively.
The entries of the first array are 
just $l_i(D+E) \bigl( c l_1(D-E+D_1) - \delta l_3(D-E+D_1) \bigr)$, 
for $i \in \{ 1,2,3,4 \}$. Since $D-E+D_1 \in \J(K)$, we know
from Theorem~\ref{alwaysnonzero} that $c l_1(D-E+D_1) - \delta l_3(D-E+D_1)$
is nonzero, and so the elements of this array are not all zero,
and they gives the $u$-coordinates of~$D+E$. Similarly the
elements of the second array are not all zero, and they give
the $y$-coordinates of~$D+E$, as required.
\end{proof}

\subsection*{Satisfiability of the conditions on the parameters.}
We should also comment on the satisfiability of the
condition that $c, cd, g(g - b^2(c-1))$ are nonsquares in~$K$.
A sufficient condition (which is equivalent for a finite field) is
for~$d$ to be square, and for~$c$ and $g(g - b^2(c-1))$ to be nonsquares.
The condition for~$d$ to be square is:
$b^2 c - c + 1 = x^2$, for some~$x$.
Regarding this as a conic in $b,x$, we may use the basepoint
$(b,x) = (1,1)$, and define the parameter $t = (x-1)/(b-1)$.
After solving for~$b$, we can write $b = (t^2+c-2t)/(t^2-c)$
in terms of our parameter~$t$.
At this stage, we can think of $\fa,t,c$ as our parameters,
and we now only require that~$c$ and~$g(g - b^2(c-1))$ are 
nonsquares in~$K$. 
Note that $g (g - b^2 (c-1))$ modulo squares is the same
as the following polynomial in our parameters $\fa,t,c$.
\begin{equation}\label{eq:nonsquarecond}
\begin{split}
& 2 (t^2 w^2 + c t^2 - 4 c t w + c w^2 + c^2)\\
& (c t^2 w^2 +c^2 t^2 + 4 c^2 t w + c^2 w^2 - 4 c t^2 w - 4 c t w^2\\
& + t^2 w^2 + c^3 - 4 c^2 t - 4 c^2 w + c t^2 + 4 c t w + c w^2 + c^2)\\
& (2 c^2 t^4 w^2 - 2 c t^4 w^3 + t^4 w^4 - 4 c^3 t^2 w^2 - 2 c^2 t^4 w
 + 4 c^2 t^2 w^3 - 2 t^3 w^4\\ 
& + 2 c^4 w^2 + 4 c^3 t^2 w - 2 c^3 w^3 + c^2 t^4 - 4 c^2 t^2 w^2 
 + c^2 w^4 + 4 c t^3 w^2\\
&  - 2 c t w^4 + 2 t^2 w^4 - 2 c^4 w - 2 c^2 t^3 + 4 c^2 t w^2 
- 4 c t^2 w^2 + c^4 - 2 c^3 t + 2 c^2 t^2).
\end{split}
\end{equation}
This has discriminant (with respect to t):
\begin{equation}\label{eq:discnonsquarecond}
\begin{split}
& 2^{40} c^{28} (c-1)^{20} (2 c w^2 - 2 c w - w^2 + c)^2 
(2 c^2 - 2 c w + w^2 - c)^2\\ 
& (2 c w - w^2 - c)^{24} (-w^2 + c)^{24} (w^2 + c - 2 w)^4,
\end{split}
\end{equation}
and the coefficient of the highest power of~$t$ is:
\begin{equation}\label{eq:highpowert}
2 (w^2 + c) (c w^2 + c^2 - 4 c w + w^2 + c)
(2 c^2 w^2 - 2 c w^3 + w^4 - 2 c^2 w + c^2).
\end{equation}
Assuming that our field~$K$ contains nonsquares, take~$c$
to be any fixed nonsquare in~$K$. Now let~$w$ be any member
of~$K$ for which~(\ref{eq:discnonsquarecond}) 
and~(\ref{eq:highpowert}) are nonzero, which means
avoiding at most~$16$ values in~$K$, since
automatically $-w^2 + c$ is nonzero. Let $\phi(t)$ denote 
the polynomial in~$t$ obtained by substituting these
values of~$c,w$ into~(\ref{eq:nonsquarecond}).
The will be a degree~$8$ polynomial in~$t$ with no repeated
roots, and our only remaining requirement is to choose
values for~$t$ such that $\phi(t)$ is nonsquare in~$K$
(as well as avoiding any values of~$t$ for which the
disciminant of our original curve is zero).

It is now clear that there are plentiful examples of
our conditions being satisfied when~$K$ is a number
field or a finite field (for sufficiently large~$p$). 
The curve $y^2 = \phi(t)$ is of genus~$3$ over~$K$.
For example, when~$K$ is a number field, by 
Faltings' Theorem~\cite{Faltings} this curve
has only finitely many points, and so we need only avoid 
finitely many values of~$t$. When~$K = \F_q$ is a finite field
with~$q$ elements, the Hasse-Weil bounds (see Chapter~3 of~\cite{Moreno})
tells us that the number of points over~$K$ on
the curve $y^2 = \phi(t)$ is in the range from $q+1-2g\sqrt{q}$
to $q+1+2g\sqrt{q}$, where here $g=3$ is the genus of this curve;
hence roughly half of the values of~$t \in K$ will give nonsquare values
for~$\phi(t)$. It follows from the above discussion that there
will be examples in arbitrarily large finite fields
for which the conditions are satisfied.
As an explicit example, let $K = \F_{1201}$.
Then the conditions are satisfied for $\fa = 6, b = 7, c = 11$,
since then $c = 11, cd = 1015$ and $g(g-b^2(c-1)) = 202$
are all nonsquares in~$\F_{1201}$.


\subsection*{Data Availability}

Data sharing is not applicable to this article as no datasets were
generated or analysed during the current study.

\subsection*{Auxiliary Files}

The supplementary information file~\cite{FlynnMaple} includes details
of the calculations performed using the computer algebra software
Maple. The file is available from the website of the first-named
author, as described in the references, and is also available as an
ancillary file from \texttt{arxiv:2211.01450}.
There is also a shortened version of this file~\cite{FlynnShort}
which gives only the assignments of the main objects (such as
the diagonalising change in basis, the defining equations and
the group law), which can be used in any algebra package.

\subsection*{Conflict of Interest}

The authors certify that there is no actual or potential conflict of
interest in relation to this article.

%


\end{document}